\newcommand{\inlineitem}[1][]{%
    \ifnum\enit@type=\tw@
    {\descriptionlabel{#1}}
    \hspace{\labelsep}%
    \else
    \ifnum\enit@type=\z@
    \refstepcounter{\@listctr}\fi
    \quad\@itemlabel\hspace{\labelsep}%
    \fi} \makeatother
\newtheorem{thm}{Theorem}[subsection]
\newtheorem{lem}[thm]{Lemma}
\newtheorem{cor}[thm]{Corollary}
\newtheorem{example}[thm]{Example}
\def\namedlabel#1#2{\begingroup
    \def\@currentlabel{#2}%
    \label{#1}\endgroup
}
\theoremstyle{definition}
\newtheorem{definition}[thm]{Definition}
\theoremstyle{remark}
\newtheorem{remark}[thm]{Remark}
\numberwithin{equation}{subsection}
\begin{document}

\title[Rational and Quasi-Permutation Representations]{Rational and Quasi-Permutation Representations of Holomorph of Cyclic $p$-Groups}

\author[Soham Swadhin Pradhan]{Soham Swadhin Pradhan}
\address{Soham Swadhin Pradhan, School of Mathematics, Harish-Chandra Research Institute, HBNI, Chhatnag Road, Jhunsi, Allahabad, 211019,  India. \, {\it Email address:} {\tt soham.spradhan@gmail.com}}

\author[B. Sury]{B. Sury{*}}
\address{B. Sury, Stat-Math Unit, Indian Statistical Institute, Bangalore Centre, 8-th Mile Mysore Road , Bangalore, 560059, India. \, {\it Email address:} {\tt surybang@gmail.com}}

\subjclass[2010]{20CXX}
\keywords{Holomorph, Rational Representations, Faithful Representations, Quasi-permutation Representations}
\thanks{* Corresponding author.\\
E-mail addresses: surybang@gmail.com (B. Sury), soham.spradhan@gmail.com (Soham Swadhin Pradhan).}
\date{\sc \today}
\begin{abstract}
For a finite group $G$, let $p(G)$ denote the {\it minimal degree}
of a faithful permutation representation of $G$. The minimal degree
of a faithful representation of $G$ by quasi-permutation matrices
over the fields $\mathbb{C}$ and $\mathbb{Q}$ are denoted by $c(G)$
and $q(G)$ respectively. In general $c(G)\leq q(G)\leq p(G)$ and
either inequality may be strict. In this paper, we study the
representation theory of the group $G =$ Hol$(C_{p^{n}})$, which is
the {\it holomorph} of a cyclic group of order $p^n$, $p$ a prime.
This group is metacyclic when $p$ is odd and metabelian but not
metacyclic when $p=2$ and $n \geq 3$. We explicitly describe the set
of all {\it isomorphism types} of irreducible representations of $G$
over the field of complex numbers $\mathbb{C}$ as well as the
isomorphism types over the field of rational numbers $\mathbb{Q}$.
We compute the {\it Wedderburn decomposition} of the rational group
algebra of $G$. Using the descriptions of the irreducible
representations of $G$ over $\mathbb{C}$ and over $\mathbb{Q}$, we
show that $c(G) = q(G) = p(G) = p^n$ for any prime $p$. The proofs
are often different for the case of $p$ odd and $p=2$.
\end{abstract}
\maketitle
\section{Introduction} \label{S:intro}
Throughout this paper, $G$ always denotes a finite group, $F$ denotes a field of characteristic $0$ and $p$ denotes a prime number.

The {\it holomorph} of a group $G$ is the semi-direct product of the group $G$ with its automorphism group, with respect to the obvious natural action, and is denoted by Hol$(G)$.

Let $C_{p^n}$ denote the cyclic group of order $p^n$, $n \geq 1$. Let $\phi$ denote the Euler's phi function. For $p$ odd, the automorphism group of $C_{p^n}$, is well known to be cyclic group of order $\phi(p^n)$. Therefore, for $p$ odd, Hol$(C_{p^n})$ is a {\it split metacyclic} group. It is well known that the automorphism group of $C_{2^n}$ is isomorphic to the multiplicative group $(\mathbb{Z}/2^{n}\mathbb{Z})^{*}$ of units of the ring $\mathbb{Z}/2^{n}\mathbb{Z}$. For $n = 2$, the multiplicative group $(\mathbb{Z}/4\mathbb{Z})^{*}$ of units of the ring $\mathbb{Z}/4\mathbb{Z}$ is a cyclic group of order $2$, generated by the class $-1$. For $n \geq 3$, the multiplicative group $(\mathbb{Z}/2^{n}\mathbb{Z})^{*}$ of units of the ring $\mathbb{Z}/2^{n}\mathbb{Z}$ is direct product of $\mathbb{Z}/2\mathbb{Z}$, generated by the class of $-1$ and a copy of $\mathbb{Z}/2^{n-2}\mathbb{Z}$, generated by the class of $5$. Note that for $n =2$, Hol$(C_{2^n})$ is the {\it dihedral group} of order $8$, which is a split metacyclic group. For $n \geq 3$, Hol$(C_{2^n})$ is a {\it split metabelian} group but not metacyclic.

Frobenius studied representations of finite groups over the field of complex numbers. Schur extended that to study representations of finite groups over subfields of complex numbers, especially the field of rational numbers. He understood that representations of finite groups over the field of rational numbers, in general subfields of complex numbers, in much deeper and involved arithmetic aspects. Schur observed that certain irreducible representations over the field of complex numbers could not be realized over the field of rational numbers, but that a finite
multiple of certain irreducible representations could always be realized. The number
associated with this multiplicity is called the {\it Schur index} (see Section \ref{Section $2.1$}).

A fundamental result in non-commutative ring theory is the {\it
Artin-Wedderburn theorem} (see \cite{Artin}, \cite{Wedderburn}). The theorem asserts that the semisimple
group ring $F[G]$ is abstractly isomorphic to direct sum of matrix
rings over finite dimensional division algebras over $F$. This is
referred to as the {\it Wedderburn decomposition} of $F[G]$. The
{\it Wedderburn theory}, which came around $1914$, shows that questions on representations of a group $G$ over $F$ can be thought of as questions on the algebraic
structure of the group algebra $F[G]$.

In this paper, we first determine the set of all isomorphism types
of irreducible representations of Hol$(C_{p^n})$ over the field of
complex numbers. Thereafter, using the classical Schur theory on
group representations (see \cite{Schur}), we find the set of all
isomorphism types of irreducible representations of Hol$(C_{p^n})$
over the field of rational numbers. We prove that the Schur index of
any irreducible complex representation of Hol$(C_{p^n})$ with
respect to the field of rationals is equal to $1$. In addition, we
also compute the Wedderburn decomposition of rational group algebra
of Hol$(C_{p^n})$. Note that for $ p = 2, n = 1$, Hol$(C_{p^n})$ is $C_{2}$, and for $ p = 2, n = 2$, that is $D_{8}$, the dihedral group of order $8$. For these two groups, all isomorphism types of irreducible representations and Wedderburn decomposition are well known to us. In this paper, when $p = 2$, we consider $n \geq 3$.

Let $S_{n}$ denote the {\it permutation group } on $n$ letters. Let $|G|$ denote the order of the group $G$. The {\it Cayley’s theorem} states that any group $G$ can be embedded in $S_{|G|}$. It is interesting to find the least positive integer $n$ such that $G$ is embedded in $S_{n}$. The minimal degree of a {\it faithful permutation} representation of $G$ is the least positive integer $n$ such that $G$ is embedded in $S_{n}$, and is denoted by $p(G)$.

In $1963$, analogous to permutation group, Wong (see \cite{wong}) defined a quasi-permutation group as follows: If $G$ is a finite linear group of degree $n$, that is, a finite group of automorphisms of an $n$-dimensional complex vector space (or, equivalently, a finite group of non‐singular matrices of order $n$ with complex coefficients) such that the trace of every element of $G$ is a non-negative integer, then $G$ is called a {\it  quasi-permutation group}. The reason for this terminology is that, if $G$ is a permutation group of degree n, its elements, considered as acting on the elements of a basis of an $n$-dimensional complex vector space, induce automorphisms of the vector space forming a group isomorphic to $G$ . The trace of the automorphism corresponding to an element x of G is equal to the number of letters left fixed by $x$, and so is a non-negative integer. Therefore a permutation group of degree $n$ has a representation as a quasi-permutation group of degree $n$.

A square matrix over $\mathbb{C}$ is called {\it quasi-permutation} matrix if it has non-negative integral trace. Thus every permutation matrix over $\mathbb{C}$ is a quasi-permutation matrix. For a group $G$, let $q(G)$ denote the minimal degree of a faithful representation of $G$ by {\it quasi-permutation matrices over $\mathbb{Q}$}, and let $c(G)$ be the minimal degree of a faithful representation of $G$ by {\it quasi-permutation matrices over $\mathbb{C}$} (see \cite{Burns}). It  is  easy  to see that for a group $G$ the following inequalities hold:
$$c(G)\leq q(G)\leq p(G).$$

In \cite{Burns}, the case of equality has been investigated for abelian groups. For an abelian group, the invariants $c(G)$ and $q(G)$ coincide, as the Schur index of any irreducible complex representation of an abelian group is equal to $1$. In \cite{Behravesh}, Behravesh and Ghaffarzadeh proved that if $G$ is a finite $p$-group then $q(G)=p(G)$. They also proved that for a finite $p$-group $G$, $p$ odd prime, $q(G)=p(G)=c(G)$. The above quantities have been found for several class of finite groups (see \cite{Behravesh7, Behravesh3, Behravesh4, Behravesh5, Behravesh6, Darafsheh, M. Ghaffarzadeh}).

In this paper, for $G =$ Hol$(C_{p^{n}})$, $p$ a prime, we compute
$c(G), q(G), p(G)$, and prove that $c(G) = q(G) = p(G) = p^{n}$. A
point to be noted is that even though the structure of the holomorph
is different for the case $p=2$ from the case of odd primes $p$, the
end result is the same. The method of proof is accomplished by
explicitly describing the equivalence classes of irreducible
representations over $\mathbb{C}$ and over $\mathbb{Q}$, and by
computing the Wedderburn decomposition of the rational group
algebra. To compute $c(G)$ and $q(G)$, we use the basic method
described by Behravesh in \cite{Behravesh7}. \\
{\it The main theorems are Theorems \ref{Theorem $6.0.1$}, \ref{Theorem $7.0.2$}, \ref{Theorem $8.0.1$}, \ref{Theorem $9.1.1$}, \ref{Theorem $9.2.2$}, \ref{Theorem $9.3.1$}, and \ref{Theorem $10.2.2$}.}

\section{Schur Index in the Theory of Group Representations}\label{Section $2.1$}
\vskip 5mm

Let $G$ be a group, and let $F$ be a field of
characteristic $0$. Fix an algebraic closure $\overline{F}$ of $F$.
Consider an irreducible representation $\widetilde \rho$ of $G$ over
$\overline{F}.$ If $u$ is the exponent of $G$, then the character
$\widetilde \chi$ of $\widetilde\rho$ takes values in the field
$F(\zeta_u),$ where $\zeta_u$ is a primitive $u$-th root of unity.
The {\it character field} $F(\widetilde \chi)$ of $\widetilde{\rho}$
over $F$ is the extension field of $F$ obtained by adjoining all the
values $\widetilde \chi(g)$'s as $g$ varies in $G.$ Let $\Gamma$ be
the Galois group of $F(\widetilde \chi)$ over $F.$ It is well known
and easy to see that for each $\sigma,$ an element of $\Gamma,$ the
values $\sigma(\widetilde \chi(g))$ are also values of a character
of a representation of $G$ over $\overline{F}.$ In this manner,
starting with $\widetilde \rho,$ we have obtained $|\Gamma|$
distinct representations  of $G,$ over $\overline{F};$ these
representations are said to be {\it Galois conjugate}. This gives a
class of mutually inequivalent Galois conjugate representations of
$G$ which have different characters, but all of them have the same
character field. Schur's theory implies that there exists a unique
irreducible $F$-representation $\rho$ of $G$ such that $\widetilde
\rho$ occurs as an irreducible constituent of $\rho\otimes_F
{\overline F}$, with some multiplicity; this multiplicity is called
the {\bf Schur index} of $\widetilde \rho$ with respect to $F$. This
Schur index is denoted by $m_F(\widetilde \rho)$ or $m_F(\widetilde
\chi)$. We summarize the above discussion in the following theorem.
\vskip 3mm

\begin{thm} [\cite{MR0122892}, Theorem 2] \label{Theorem $1$}
{(\bf Schur)} Every irreducible $F$-representation $\rho$ is completely reducible over $\overline{F}$ into a certain number inequivalent irreducible $\overline{F}$-representations $\widetilde{\rho} = \widetilde{\rho_{1}}, \widetilde{\rho_{2}}, \dots, \widetilde{\rho_{\delta}}$ with the same multiplicity $m$ given by
$$m = m_{F}(\widetilde \chi_{i}), (1 \leq i \leq \delta),$$
where $\widetilde{\chi}_{i}$ is the character of $\widetilde{\rho_{i}}$ with $\widetilde \chi_{1} = \widetilde \chi$. The number $\delta$ is given by
$$\delta = [F(\widetilde \chi_{i}): F], (1 \leq i \leq \delta).$$ The $\overline{F}$-representations $\widetilde{\rho} = \widetilde{\rho_{1}}, \widetilde{\rho_{2}}, \dots, \widetilde{\rho_{\delta}}$ are Galois conjugates with respect to $F$. Conversely, each irreducible $\overline{F}$-representation $\widetilde{\rho}$ occurs in the decomposition of an unique irreducible $F$-representation $\rho$ of $G$. Moreover, if $\psi$ is the character corresponding to the irreducible $F$-representation $\rho$, and $\Gamma$ is the Galois group of $F(\widetilde{\chi_{i}})$ over $F$, then
$$\psi = m(\sum_{\sigma \in \Gamma} {\widetilde{\chi}}^\sigma),$$
where $\widetilde{\chi}^\sigma(g) = \sigma(\widetilde{\chi}(g))$, for all $g \in G$.
\end{thm}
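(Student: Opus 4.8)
This statement is the classical structure theorem describing how an irreducible $F$-representation breaks up over $\overline{F}$; the standard line of argument is Galois descent applied to the group algebras, and that is how I would proceed. Since $\mathrm{char}\,F=0$, Maschke's theorem makes $F[G]$ semisimple, so $\rho$ corresponds to a simple $F[G]$-module $V$. Choose a finite splitting field $K\subseteq\overline{F}$ for $G$ that is Galois over $F$ and contains $\zeta_u$ (with $u=\exp(G)$); then $F(\widetilde\chi)\subseteq K$, absolutely irreducible $K[G]$-modules correspond under $-\otimes_K\overline{F}$ to irreducible $\overline{F}[G]$-modules with the same characters, and $\overline{F}\otimes_F V\cong\overline{F}\otimes_K(K\otimes_F V)$, so it suffices to analyse $V_K:=K\otimes_F V$. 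As a module over the split semisimple algebra $K[G]$ it decomposes as $V_K\cong\bigoplus_i W_i^{\,a_i}$, the $W_i$ pairwise non-isomorphic absolutely irreducible $K[G]$-modules.

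The heart of the proof is that $\mathcal{G}:=\mathrm{Gal}(K/F)$ acts on $V_K$ semilinearly through the left tensor factor, commuting with the $G$-action, fixing $1\otimes V$ pointwise, with $(V_K)^{\mathcal{G}}=1\otimes V\cong V$; moreover every $\mathcal{G}$-stable $K[G]$-submodule $M$ satisfies $M=K\otimes_F M^{\mathcal{G}}$. For $\sigma\in\mathcal{G}$, this action carries the $W_i$-isotypic component of $V_K$, as a $\sigma$-semilinear $G$-isomorphism, onto the $W_i^{\sigma}$-isotypic component, where $W_i^{\sigma}$ is $W_i$ with scalars twisted by $\sigma$, a module with character $\widetilde\chi_i^{\,\sigma}$ of the same $K$-dimension as $W_i$. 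If the $\mathcal{G}$-orbit on $\{W_i\}$ were not transitive, the sum of the isotypic components indexed by a proper non-empty orbit would be a $\mathcal{G}$-stable $K[G]$-submodule whose $\mathcal{G}$-fixed points form a proper non-zero $F[G]$-submodule of $V$, contradicting simplicity. Hence $\mathcal{G}$ permutes the $W_i$ transitively; as the twists preserve $K$-dimension, all $a_i$ equal a common value $m$, and the $W_i$ are exactly the Galois conjugates of $\widetilde\rho=\widetilde\rho_1$.

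It remains to count and to identify the constants. Two conjugates $W^{\sigma}$, $W^{\tau}$ are isomorphic iff $\widetilde\chi^{\,\sigma}=\widetilde\chi^{\,\tau}$, i.e.\ iff $\sigma,\tau$ induce the same automorphism of the field $F(\widetilde\chi)$ generated by the character values; since $\Gamma=\mathrm{Gal}(F(\widetilde\chi)/F)$ acts faithfully there, the number $\delta$ of distinct $W_i$ equals $[F(\widetilde\chi):F]=|\Gamma|$, and this is independent of $i$ because all conjugates share the character field. The module $V$ is the unique simple $F[G]$-module having $\widetilde\rho$ among the constituents of its scalar extension --- uniqueness follows from $\mathrm{Hom}_{\overline{F}[G]}(\overline{F}\otimes_F V,\overline{F}\otimes_F V')\cong\overline{F}\otimes_F\mathrm{Hom}_{F[G]}(V,V')$, which vanishes for non-isomorphic simple $V,V'$ --- so $m$ is, by definition, the Schur index $m_F(\widetilde\chi_i)$, establishing the equality of multiplicities and the converse statement. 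Finally, taking characters in $\overline{F}\otimes_F V\cong\bigoplus_{\sigma}(\widetilde\rho^{\,\sigma})^{m}$, with $\sigma$ running over coset representatives of $\mathrm{Gal}(K/F(\widetilde\chi))$ in $\mathcal{G}$, i.e.\ over $\Gamma$, gives $\psi=m\bigl(\sum_{\sigma\in\Gamma}\widetilde\chi^{\,\sigma}\bigr)$.

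The step that needs genuine care is the Galois-descent input: that scalar extension commutes with the relevant $\mathrm{Hom}$ spaces and with the isotypic decomposition, that $(V_K)^{\mathcal{G}}=V$ and more generally $M=K\otimes_F M^{\mathcal{G}}$ for $\mathcal{G}$-stable submodules $M$, and that the $\mathcal{G}$-action is equivariant for the isotypic decomposition --- equivalently, that the Wedderburn components of $F[G]$ are exactly the sums over $\mathcal{G}$-orbits of Wedderburn components of $K[G]$. Granting this, the counting of conjugate characters, the identification of the common multiplicity with the Schur index, and the character formula are all routine.
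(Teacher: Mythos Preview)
The paper does not supply its own proof of this theorem; it is quoted as a classical result of Schur, with a citation to Reiner's exposition \cite{MR0122892}, Theorem~2. So there is no in-paper argument to compare against.

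That said, your proof is correct and is essentially the standard Galois-descent argument one finds in the references (e.g.\ Curtis--Reiner or Isaacs): pass to a finite Galois splitting field $K/F$, let $\mathrm{Gal}(K/F)$ act semilinearly on $V_K$, observe that it permutes the isotypic components, use simplicity of $V$ to force transitivity of this action, and then read off $\delta$, $m$, and the character formula. The only delicate point, which you flag yourself, is the descent statement that $\mathcal{G}$-stable $K[G]$-submodules of $V_K$ are exactly those of the form $K\otimes_F U$ for $F[G]$-submodules $U\le V$; this is the classical Speiser/Hilbert~90 descent for vector spaces with a compatible $G$-action, and it is indeed what makes the transitivity step go through. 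Your identification of $m$ with the Schur index is correct by the very definition recalled in the paper just before the theorem, and your uniqueness argument via $\mathrm{Hom}_{\overline{F}[G]}(\overline{F}\otimes_F V,\overline{F}\otimes_F V')\cong\overline{F}\otimes_F\mathrm{Hom}_{F[G]}(V,V')$ is the clean way to see that distinct simple $F[G]$-modules have disjoint $\overline{F}$-constituents.
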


\begin{definition}
Let $\rho$ be an irreducible $F$-representations of $G$, and $\chi$ be its corresponding character. The {\it kernel} of $\rho$ or $\chi$ is defined as $\{g \in G ~|~ \chi(g) = \chi(1)\}$, and is denoted by ker$(\rho)$ or ker$(\chi)$.
\end{definition}
\begin{thm}\label{Theorem $2.0.3$}
$G$ has a faithful irreducible $F$-representation if  and only if $G$ has a faithful irreducible $\overline{F}$-representation.
\end{thm}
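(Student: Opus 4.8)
The plan is to derive both implications of the equivalence from Theorem~\ref{Theorem $1$} (Schur), the point being that passing from $F$ to $\overline F$ and applying Galois conjugation both leave the kernel of an irreducible representation unchanged. Throughout recall that in characteristic $0$ the kernel of an irreducible representation $\rho$ in the sense of the definition above, namely $\ker\rho=\{g\in G:\chi(g)=\chi(1)\}$, agrees with the ordinary kernel $\{g\in G:\rho(g)=\mathrm{id}\}$, because $\rho(g)$ has finite order and is therefore the identity precisely when all of its eigenvalues equal $1$, i.e. precisely when $\chi(g)=\chi(1)$; in particular $\rho$ is \emph{faithful} if and only if $\ker\rho=\{1\}$.

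I would first isolate two elementary facts. Fact~(i): if $\widetilde\rho$ is an irreducible $\overline F$-representation with character $\widetilde\chi$ and $\sigma$ is any element of $\mathrm{Gal}(F(\widetilde\chi)/F)$, then $\ker\widetilde\rho^{\sigma}=\ker\widetilde\rho$; indeed the defining condition $\widetilde\chi^{\sigma}(g)=\widetilde\chi^{\sigma}(1)$ reads $\sigma(\widetilde\chi(g))=\sigma(\widetilde\chi(1))$, which, $\sigma$ being injective, is equivalent to $\widetilde\chi(g)=\widetilde\chi(1)$. Fact~(ii): if $\rho$ is an irreducible $F$-representation and $\rho\otimes_F\overline F\cong m(\widetilde\rho_1\oplus\cdots\oplus\widetilde\rho_\delta)$ is its decomposition over $\overline F$ as in Theorem~\ref{Theorem $1$}, then $\ker\rho=\bigcap_{i=1}^{\delta}\ker\widetilde\rho_i$; here $\ker\rho=\ker(\rho\otimes_F\overline F)$ since a matrix is the identity over $F$ exactly when it is the identity over $\overline F$, and $\ker(\rho\otimes_F\overline F)=\bigcap_{i}\ker\widetilde\rho_i$ since the kernel of a direct sum is the intersection of the kernels. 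By Theorem~\ref{Theorem $1$} the $\widetilde\rho_i$ are Galois conjugate, so Fact~(i) gives $\ker\widetilde\rho_1=\cdots=\ker\widetilde\rho_\delta$, and combined with Fact~(ii) we conclude $\ker\rho=\ker\widetilde\rho_i$ for every $i$.

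Both directions are then immediate. If $G$ has a faithful irreducible $F$-representation $\rho$, choose any irreducible constituent $\widetilde\rho_i$ of $\rho\otimes_F\overline F$; then $\ker\widetilde\rho_i=\ker\rho=\{1\}$, so $\widetilde\rho_i$ is a faithful irreducible $\overline F$-representation. Conversely, if $G$ has a faithful irreducible $\overline F$-representation $\widetilde\rho$, then by the converse part of Theorem~\ref{Theorem $1$} it is a constituent of a unique irreducible $F$-representation $\rho$; taking $\widetilde\rho=\widetilde\rho_1$ in the above we get $\ker\rho=\ker\widetilde\rho=\{1\}$, so $\rho$ is a faithful irreducible $F$-representation.

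I do not anticipate a real obstacle: once Theorem~\ref{Theorem $1$} is in hand the argument is pure bookkeeping with kernels. The only steps deserving a line of care are the identification, in characteristic $0$, of the character-kernel $\{g:\chi(g)=\chi(1)\}$ with the genuine kernel of the representation, and the observation that Galois conjugation, being injective on the character field, can neither enlarge nor shrink that set; both are standard.
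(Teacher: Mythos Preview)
Your proof is correct and follows the same approach as the paper: both use Theorem~\ref{Theorem $1$} to conclude that $\ker\rho=\ker\widetilde\chi$ for any irreducible $\overline F$-constituent $\widetilde\chi$ of $\rho$, and deduce the equivalence from this. The paper merely asserts that this equality of kernels is ``easy to see'' and leaves the converse direction implicit, whereas you spell out Facts~(i) and~(ii) and both implications explicitly; but the underlying argument is identical.
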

\begin{proof}
Let $\rho$ be a faithful irreducible $F$-representation of $G$, and $\psi$ be its corresponding character. By Theorem \ref{Theorem $1$}, we have
$$\psi = m(\sum_{\sigma \in \Gamma} { \widetilde{\chi}}^\sigma),$$
where $\widetilde{\chi}$ is an irreducible $\overline{F}$-character of $G$ and $\Gamma$ is the Galois group of $F(\widetilde{\chi_{i}})$ over $F$. Then it is easy to see that ker$(\psi) =$ ker$(\widetilde \chi)$. This completes the proof.
\end{proof}
\vskip 5mm

\section{Algorithms for $p(G)$, $c(G)$ and $q(G)$}
\vskip 5mm

\noindent We begin with some basic results on $p(G)$, $c(G)$ and
$q(G)$ due to Behravesh \cite{Behravesh7} for a finite group $G$.
Recall that $p(G)$ denotes the {\it minimal degree} of a faithful
permutation representation of $G$, and that the minimal degrees of a
faithful representation of $G$ by quasi-permutation matrices over
the fields $\mathbb{C}$ and $\mathbb{Q}$ are denoted by $c(G)$ and
$q(G)$ respectively.

\begin{lem}[\cite{Behravesh7}, Lemma 2.1]
Let $G$ be a finite group. Let $H_{i} \leq G$ for $i = 1, 2, \dots , n$, and $K_{i} = \cap_{x \in G} H^{x}_{i}$, the core of $H_{i}$ in $G$. Suppose that $ \displaystyle \cap^{n}_{i=1} K_{i} = \{1\}$. Then $\displaystyle p(G) \leq \sum^{n}_{i = 1}[G:H_{i}]$.
\end{lem}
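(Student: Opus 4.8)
The plan is to exhibit a single faithful permutation representation of $G$ whose degree is exactly $\sum_{i=1}^{n}[G:H_{i}]$, obtained by gluing together the natural coset actions. First I would recall the standard fact underlying the statement: for any subgroup $H\leq G$, left multiplication gives an action of $G$ on the coset space $G/H$, i.e.\ a permutation representation of degree $[G:H]$, and its kernel is precisely $\bigcap_{x\in G}xHx^{-1}$, the core of $H$ in $G$. Indeed, $a\in G$ fixes every coset $gH$ iff $agH=gH$ for all $g\in G$, i.e.\ $g^{-1}ag\in H$ for all $g$, i.e.\ $a\in\bigcap_{x\in G}xHx^{-1}$. Applied to $H=H_{i}$, this kernel is exactly $K_{i}$.

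Next I would form the disjoint union $\Omega=\bigsqcup_{i=1}^{n}G/H_{i}$ and let $G$ act on $\Omega$ by acting on each summand $G/H_{i}$ in the way just described. This is a genuine permutation action, giving a homomorphism $\pi\colon G\to\mathrm{Sym}(\Omega)$ with $|\Omega|=\sum_{i=1}^{n}[G:H_{i}]$. An element of $G$ lies in $\ker\pi$ iff it acts trivially on every summand, so $\ker\pi=\bigcap_{i=1}^{n}K_{i}$, which is trivial by hypothesis. Hence $\pi$ is injective and realizes $G$ as a subgroup of $\mathrm{Sym}(\Omega)\cong S_{|\Omega|}$.

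Finally, since $p(G)$ is by definition the least degree of a faithful permutation representation of $G$, the embedding just constructed yields $p(G)\leq|\Omega|=\sum_{i=1}^{n}[G:H_{i}]$, which is the desired inequality. There is no serious obstacle in this argument: the only points requiring (routine) care are the identification of the kernel of a coset action with the core, and the observation that the kernel of a disjoint-union action is the intersection of the kernels of its constituent actions.
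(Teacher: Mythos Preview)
Your proof is correct and is precisely the standard argument: take the disjoint union of the coset actions $G/H_i$, identify each kernel with the core $K_i$, and observe that the kernel of the combined action is $\bigcap_i K_i=\{1\}$. The paper itself does not give a proof of this lemma but simply cites it from \cite{Behravesh7}; your argument is the intended one.
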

\begin{thm}[\cite{Behravesh7}, Theorem 2.2]
Let $G$ be a finite group. Then
$$\displaystyle p(G) = \textrm{min}\Big \{ \sum^{n}_{i = 1}[G : H_{i}]| H_{i} \leq G (1 \leq i \leq n), \cap^{n}_{i = 1}\mathrm{core}_{G}{H_{i}} = 1 \Big\}.$$
\end{thm}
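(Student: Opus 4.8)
The plan is to prove the two inequalities separately: the upper bound $p(G)\le \min\{\cdots\}$ is essentially the preceding lemma, and the matching lower bound comes from decomposing an arbitrary faithful permutation action of $G$ into orbits.

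First, for the bound $p(G)\le \min\{\cdots\}$, suppose $H_1,\dots,H_n\le G$ satisfy $\cap_{i=1}^{n}\mathrm{core}_G(H_i)=1$. I would form the permutation action of $G$ on the disjoint union $\bigsqcup_{i=1}^{n}(G/H_i)$ of the left coset spaces; this has degree $\sum_{i=1}^{n}[G:H_i]$. An element $g\in G$ acts trivially on $G/H_i$ exactly when $g\in\mathrm{core}_G(H_i)$, so the kernel of the combined action is $\cap_{i=1}^{n}\mathrm{core}_G(H_i)=1$, i.e. the action is faithful. Hence $p(G)$ is at most $\sum_{i}[G:H_i]$ for every admissible family, and therefore at most the minimum. (This is precisely Lemma 2.1.)

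For the reverse inequality, I would start from a faithful permutation representation of $G$ of degree $m=p(G)$, i.e. an embedding $G\hookrightarrow S_m$. Decompose the underlying $m$-point set into $G$-orbits $O_1,\dots,O_n$; for each $i$ pick $\omega_i\in O_i$ and set $H_i=\mathrm{Stab}_G(\omega_i)$, so that the $G$-action on $O_i$ is isomorphic to the coset action on $G/H_i$ and $|O_i|=[G:H_i]$, giving $m=\sum_{i=1}^{n}[G:H_i]$. Since an element fixes $O_i$ pointwise iff it lies in $\cap_{\omega\in O_i}\mathrm{Stab}_G(\omega)=\mathrm{core}_G(H_i)$, the kernel of the whole action is $\cap_{i=1}^{n}\mathrm{core}_G(H_i)$, and faithfulness forces this to be trivial. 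Thus $\{H_i\}$ is an admissible family and $p(G)=m=\sum_i[G:H_i]\ge\min\{\cdots\}$. Combining the two inequalities yields the claimed equality.

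There is no genuine obstacle here; the only points deserving an explicit sentence are the identification of the kernel of the transitive action on $G/H$ with $\mathrm{core}_G(H)$ and the fact that the kernel of a disjoint union of permutation actions is the intersection of the individual kernels — both elementary. Since the statement is quoted from \cite{Behravesh7}, one could alternatively cite it outright, but the argument above is short enough to record in full.
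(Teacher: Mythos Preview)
The paper does not supply its own proof of this theorem; it is quoted verbatim from \cite{Behravesh7} (as is the preceding Lemma~2.1), so there is nothing in the paper to compare your argument against. Your proof is the standard one and is correct: the upper bound is exactly Lemma~2.1, and the lower bound follows by decomposing a minimal faithful permutation action into orbits and taking point stabilisers, which produces an admissible family $\{H_i\}$ realising $p(G)$ as $\sum_i[G:H_i]$.
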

\begin{definition}
{\it Let $G$ be a finite group and $\chi$ be a irreducible complex character of $G$. Let $\Gamma(\chi)$ be the Galois group of $\mathbb{Q}(\chi)$ over $\mathbb{Q}$. We define
    \begin{itemize}
        \item[(1)] $\displaystyle d(\chi) = |\Gamma(\chi)|\chi(1).$
        \item [(2)]
        \begin{equation*}
        \displaystyle m(\chi) =
        \begin{cases}
        0  &\mathrm{if} \, \chi = 1_G\\
        \Big{|}\mathrm{min} \Big{\{}\displaystyle \sum_{\sigma \in \Gamma(\chi)}{\chi^{\sigma}(g) : g \in G}\Big{\}}\Big{|}& \mathrm{otherwise}.\\
        \end{cases}
        \end{equation*}
        \item[(3)] $\displaystyle c(\chi) = \sum_{\sigma \in \Gamma(\chi)}{\chi^{\sigma}} + m(\chi)1_{G}.$
    \end{itemize}}
\end{definition}
\begin{lem}
Let $\chi$ be an irreducible complex character of $G$. Then $m_{\mathbb{Q}}(\chi)\sum_{\sigma \in \Gamma(\chi)}{\chi^{\sigma}}$, where $m_{\mathbb{Q}}(\chi)$ is the Schur index of $\chi$ with respect to $\mathbb{Q}$, is an irreducible rational character of $G$. Moreover, ker$(\chi) =~$ker$(m_{\mathbb{Q}}(\chi)\sum_{\sigma \in \Gamma(\chi)}{\chi^{\sigma}})$, and in particular, $\chi$ is faithful if and only if $m_{\mathbb{Q}}(\chi)\sum_{\sigma \in \Gamma(\chi)}{\chi^{\sigma}}$ is also faithful.
\end{lem}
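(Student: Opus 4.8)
The plan is to invoke Schur's theory as summarized in Theorem~\ref{Theorem $1$} to identify $m_{\mathbb{Q}}(\chi)\sum_{\sigma \in \Gamma(\chi)}\chi^{\sigma}$ with the character of the unique irreducible $\mathbb{Q}$-representation $\rho$ whose extension to $\overline{\mathbb{Q}}$ contains $\chi$, and then to read off the kernel statement from that identification. First I would set $\Gamma = \Gamma(\chi)$, the Galois group of $\mathbb{Q}(\chi)/\mathbb{Q}$, and recall from Theorem~\ref{Theorem $1$} (applied with $F = \mathbb{Q}$, so that $\delta = [\mathbb{Q}(\chi):\mathbb{Q}] = |\Gamma|$) that the character $\psi$ of the associated irreducible rational representation $\rho$ is precisely $\psi = m_{\mathbb{Q}}(\chi)\sum_{\sigma \in \Gamma}\chi^{\sigma}$. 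Since every character of a rational representation is a rational character and $\rho$ is irreducible as a $\mathbb{Q}$-representation, this $\psi$ is an irreducible rational character of $G$; that settles the first assertion essentially by quoting the theorem.

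Next I would prove $\ker(\chi) = \ker(\psi)$. One inclusion is immediate: if $g \in \ker(\chi)$ then $\chi(g) = \chi(1)$, hence $\chi^{\sigma}(g) = \sigma(\chi(g)) = \sigma(\chi(1)) = \chi(1) = \chi^{\sigma}(1)$ for every $\sigma \in \Gamma$ (the character degree is a rational integer, so it is fixed by $\Gamma$), and summing and multiplying by $m_{\mathbb{Q}}(\chi)$ gives $\psi(g) = \psi(1)$. For the reverse inclusion, suppose $g \in \ker(\psi)$, i.e. $m_{\mathbb{Q}}(\chi)\sum_{\sigma}\chi^{\sigma}(g) = m_{\mathbb{Q}}(\chi)\sum_{\sigma}\chi^{\sigma}(1)$. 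Cancelling the positive integer $m_{\mathbb{Q}}(\chi)$ gives $\sum_{\sigma}\chi^{\sigma}(g) = \sum_{\sigma}\chi^{\sigma}(1)$. Now each $\chi^{\sigma}(g)$ is a sum of $\chi(1)$ roots of unity, so $|\chi^{\sigma}(g)| \le \chi^{\sigma}(1) = \chi(1)$, with equality if and only if all those roots of unity are equal, i.e. $\chi^{\sigma}(g) = \zeta\,\chi(1)$ for some root of unity $\zeta$. Taking real parts of the equality $\sum_{\sigma}\chi^{\sigma}(g) = \sum_{\sigma}\chi(1)$ and using $\operatorname{Re}\chi^{\sigma}(g) \le |\chi^{\sigma}(g)| \le \chi(1)$ termwise forces $\operatorname{Re}\chi^{\sigma}(g) = \chi(1)$ for every $\sigma$, whence $|\chi^{\sigma}(g)| = \chi(1)$ and $\chi^{\sigma}(g) = \chi(1)$ for every $\sigma$; in particular $\chi(g) = \chi(1)$, so $g \in \ker(\chi)$. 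This establishes $\ker(\chi) = \ker(\psi)$, and the ``in particular'' clause about faithfulness is the case $\ker(\chi) = \{1\}$.

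The only mild subtlety — and the step I would be most careful about — is the cancellation of $m_{\mathbb{Q}}(\chi)$ and the termwise comparison argument: one must note that $m_{\mathbb{Q}}(\chi) \ge 1$ is a genuine positive integer so that dividing through is legitimate, and one must use the elementary fact that a sum of $N$ roots of unity has absolute value $N$ only when all the summands coincide. Everything else is a direct application of Theorem~\ref{Theorem $1$}. I would also remark that this lemma is exactly the tool that reduces the computation of $q(G)$ and $c(G)$ for $G = \mathrm{Hol}(C_{p^n})$ to bookkeeping with the rational character table, since it guarantees that the faithful irreducible rational characters are obtained from the faithful irreducible complex characters by the Galois-sum construction.
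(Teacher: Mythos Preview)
Your proposal is correct and follows essentially the same route as the paper: the irreducibility of $m_{\mathbb{Q}}(\chi)\sum_{\sigma}\chi^{\sigma}$ is exactly the content of Theorem~\ref{Theorem $1$}, and the kernel equality is what the paper packages (via Theorem~\ref{Theorem $2.0.3$}) as ``easy to see.'' The only difference is that you actually spell out the termwise $|\chi^{\sigma}(g)|\le \chi(1)$ comparison that the paper leaves implicit, which is a welcome addition rather than a genuinely different argument.
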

The proof follows from Theorem \ref{Theorem $1$} and Theorem \ref{Theorem $2.0.3$}.

With notations as in above, let $\{\mathcal{C}_{i} \,|\, 0 \leq i \leq r\}$ be the set of Galois conjugacy classes of irreducible complex characters of $G$, over $\mathbb{Q}$. For each $i \in \{0, 1, 2, \dots, r \}$, let $\psi_{i}$ be a representative of the Galois conjugacy class $\mathcal C_{i}$ with $\psi_{0} = 1_{G}$. Let $m_{i} = m_{\mathbb{Q}}(\psi_{i})$ be the Schur index of $\psi_{i}$ with respect to $\mathbb{Q}$ and $K_{i} = \textrm{ker} \psi_{i}$ be the kernel of $\psi_{i}$. Note that both the Schur index and kernel is independent of the choice of the representatives. Let $\Psi_{i} = \sum_{\chi \in \mathcal{C}_{i}}{\chi}$. Clearly $K_{i} = \textrm{ker} \Psi_{i}$. For $I \subseteq \{0, 1, \dots , r\}$, let $K_{I} = \cap_{i \in I}K_{i}$.
\begin{thm}[\cite{Behravesh7}, Theorem 3.6]
Let $G$ be a finite group. Then
$$c(G) = \mathrm{min} \Big{\{} \xi(1) + m(\xi): \xi = \sum_{i \in I} \Psi_{i}, K_{I} = 1,  I \subseteq {\{1, \dots , r\}}, K_{J} \neq 1 \,\,\mathrm{if}\,\, J \subset I \Big{\}},$$
$$q(G) = \mathrm{min} \Big{\{}  \xi(1) + m(\xi): \xi = \sum_{i \in I} {m_{i}\Psi_{i}}, K_{I} = 1, I \subseteq {\{1, \dots , r\}}, K_{J} \neq 1\,\,\mathrm{if}\,\, J \subset I \Big{\}}.$$
\end{thm}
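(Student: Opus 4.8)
The plan is to translate the extremal problems defining $c(G)$ and $q(G)$ into a combinatorial search over the Galois orbit sums $\Psi_i$, and then to verify that the orbit-sum characters $\xi+m(\xi)1_G$ are simultaneously optimal. The first step is to pin down the shape of a faithful quasi-permutation character. A complex character $\chi=\sum_j a_j\chi_j$, written over the irreducible complex characters, is rational-valued precisely when the multiplicities $a_j$ are constant along each Galois conjugacy class, i.e. $\chi=\sum_{i=0}^{r}b_i\Psi_i$ with $b_i\in\mathbb{Z}_{\ge 0}$; and since character values are algebraic integers, such a $\chi$ is automatically integer-valued. Hence a $\mathbb{C}$-representation is quasi-permutation if and only if its character has the form $\sum_{i=0}^{r}b_i\Psi_i$ with all $b_i\ge 0$ and $\chi(g)\ge 0$ for every $g\in G$, and it is faithful if and only if $\bigcap_{b_i>0}K_i=1$; as $K_0=G$, this last condition is $K_{I_0}=1$ with $I_0=\{\,i\ge 1:b_i>0\,\}$. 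For $q(G)$ the only change is that, by the Lemma on irreducible rational characters, $\chi$ must instead be a non-negative integer combination $\sum_i c_i m_i\Psi_i$ of the irreducible \emph{rational} characters $m_i\Psi_i$.

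For the upper bound, I would fix $I\subseteq\{1,\dots,r\}$ minimal with $K_I=1$ — such $I$ exist because $1_G+\sum_{i\ge 1}\Psi_i$ is the regular character, so $K_{\{1,\dots,r\}}=1$ — and set $\xi=\sum_{i\in I}\Psi_i$ for the $c(G)$ statement and $\xi=\sum_{i\in I}m_i\Psi_i$ for the $q(G)$ statement. Then $\xi$ is afforded by a $\mathbb{C}$- (respectively $\mathbb{Q}$-) representation with $\ker\xi=K_I=1$, and $\langle\xi,1_G\rangle=0$ forces $\sum_{g\in G}\xi(g)=0$, so $\min_{g}\xi(g)=-m(\xi)\le 0$. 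Therefore $\xi+m(\xi)1_G$ takes non-negative integer values and has trivial kernel; it is a faithful quasi-permutation representation of degree $\xi(1)+m(\xi)$, which gives ``$\le$'' in both formulas.

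For the lower bound, which is the crux, let $\chi$ be any faithful quasi-permutation character, written as above, with non-trivial support $I_0$; then each coefficient $b_i$ (respectively each coefficient of $m_i\Psi_i$) for $i\in I_0$ is $\ge 1$, and $K_{I_0}=1$. Choose $I\subseteq I_0$ minimal with $K_I=1$, put $\xi=\sum_{i\in I}\Psi_i$ (respectively $\sum_{i\in I}m_i\Psi_i$), and pick $g_0\in G$ with $\xi(g_0)=-m(\xi)$. Using $\Psi_i(g)\le\Psi_i(1)$ for every $i$ — each $\Psi_i$ is a character, so $|\Psi_i(g)|\le\Psi_i(1)$ — together with the coefficient bounds, I would derive
\[
\chi(1)-\chi(g_0)\;\ge\;\sum_{i\in I}\bigl(\Psi_i(1)-\Psi_i(g_0)\bigr)=\xi(1)-\xi(g_0)=\xi(1)+m(\xi),
\]
and since $\chi(g_0)\ge 0$ by the quasi-permutation hypothesis, $\chi(1)\ge\xi(1)+m(\xi)$, which is at least the claimed minimum. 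Combining this with the upper bound proves both identities.

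I expect the genuinely delicate point to be exactly the appearance of the coefficient ``$1$'' in the statement: one must argue that arbitrary multiplicities can be reduced to $1$ without losing faithfulness, which is why one passes to a \emph{minimal} subset $I$ of the support, and the resulting loss in degree is compensated precisely by the correction term $m(\xi)$. The remaining ingredients — the Galois characterization of rational-valued characters, the bound $|\Psi_i(g)|\le\Psi_i(1)$, and the non-emptiness of the index set via the regular character — are routine.
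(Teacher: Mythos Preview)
The paper does not give its own proof of this statement: it is quoted verbatim from Behravesh \cite{Behravesh7}, Theorem~3.6, and used only as a tool in Section~10. There is therefore nothing on the paper's side to compare your argument against.

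Your proof is correct and is essentially the standard one. The upper bound is immediate once you note that $\xi+m(\xi)1_G$ is a faithful quasi-permutation character (over $\mathbb{C}$, respectively $\mathbb{Q}$). The lower bound is the real content, and your key inequality
\[
\chi(1)-\chi(g_0)\;=\;\sum_{i}b_i\bigl(\Psi_i(1)-\Psi_i(g_0)\bigr)\;\ge\;\sum_{i\in I}\bigl(\Psi_i(1)-\Psi_i(g_0)\bigr)\;=\;\xi(1)+m(\xi),
\]
together with $\chi(g_0)\ge 0$, is exactly what is needed; the passage to a \emph{minimal} $I\subseteq I_0$ is what lets you replace each $b_i$ by $1$ (respectively by $m_i$) while retaining $K_I=1$. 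One small slip: $1_G+\sum_{i\ge 1}\Psi_i$ is not the regular character (that would be $\sum_j\chi_j(1)\chi_j$). What you actually need --- and what holds --- is that $\bigcap_{i\ge 1}K_i=1$, since the intersection of the kernels of all non-trivial irreducible characters is trivial; hence $K_{\{1,\dots,r\}}=1$ and minimal subsets $I$ exist. This does not affect the argument.
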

\begin{cor}[\cite{Behravesh7}, Theorem 3.7]
Let $\chi \in \mathrm{Irr}(G)$, the set of all irreducible complex characters of $G$. Then $\sum_{\alpha \in \Gamma(\chi)} {\chi^{\alpha}}$ is a rational valued character of $G$. Moreover $c(\chi)$ is a non-negative rational valued character of $G$ and $c(\chi)(1) = d(\chi) + m(\chi)$.
\end{cor}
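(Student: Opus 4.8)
The plan is to dispatch the three assertions in turn; all of them follow from the elementary Galois theory of complex characters together with Schur's theorem (Theorem~\ref{Theorem $1$}), so the argument should be short. Fix $\chi \in \mathrm{Irr}(G)$ and write $\Gamma = \Gamma(\chi) = \mathrm{Gal}(\mathbb{Q}(\chi)/\mathbb{Q})$. The first step is to recall that for $\sigma \in \Gamma$ the class function $\chi^{\sigma}$, defined by $\chi^{\sigma}(g) = \sigma(\chi(g))$, is again an irreducible complex character of $G$: one extends $\sigma$ to an automorphism of $\mathbb{Q}(\zeta_u)$, $u = \exp G$, and lets it act on the matrix entries of a representation affording $\chi$. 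Hence $\Psi := \sum_{\sigma \in \Gamma}\chi^{\sigma}$ is a genuine character of $G$. For its rationality, I would observe that for any $g \in G$ and any $\tau \in \Gamma$, left translation by $\tau$ merely permutes $\Gamma$, so $\tau(\Psi(g)) = \Psi(g)$ and therefore $\Psi(g)$ lies in the fixed field $\mathbb{Q}$; since $\Psi(g)$ is in addition a sum of character values, hence an algebraic integer, in fact $\Psi(g) \in \mathbb{Z}$. This proves that $\sum_{\alpha \in \Gamma(\chi)}\chi^{\alpha}$ is a rational-valued (indeed integer-valued) character, and, for context, Theorem~\ref{Theorem $1$} identifies $m_{\mathbb{Q}}(\chi)\,\Psi$ with the irreducible $\mathbb{Q}$-character lying over $\chi$.

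For the second assertion, by definition $c(\chi) = \Psi + m(\chi)1_G$, which is a sum of characters of $G$, hence a character, and is rational-valued by the first step. If $\chi = 1_G$, then $\Gamma$ is trivial, $m(\chi) = 0$, and $c(\chi) = 1_G \ge 0$. If $\chi \neq 1_G$, set $\mu = \min\{\Psi(g) : g \in G\}$, so $m(\chi) = |\mu|$ by definition. Each summand $\chi^{\sigma}$ is a non-trivial irreducible character, so $\langle \Psi, 1_G \rangle = 0$, i.e. $\sum_{g \in G}\Psi(g) = 0$; since $\Psi(1) = |\Gamma|\chi(1) > 0$, some value $\Psi(g)$ must be strictly negative, whence $\mu < 0$ and $m(\chi) = -\mu$. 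Consequently $c(\chi)(g) = \Psi(g) - \mu \ge 0$ for every $g$, with equality attained at the minimizing $g$, which gives the non-negativity. For the degree formula, evaluating at the identity yields $c(\chi)(1) = \Psi(1) + m(\chi) = |\Gamma(\chi)|\chi(1) + m(\chi) = d(\chi) + m(\chi)$, directly from the definition of $d(\chi)$.

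I expect the only point that is more than a definition-chase to be the non-negativity step, specifically the observation that for $\chi \neq 1_G$ the Galois sum $\Psi$ is forced to take a negative value, so that $m(\chi)$ is exactly the ``defect'' one has to add back via $1_G$; everything else reduces, once Theorem~\ref{Theorem $1$} and the integrality of character values are in hand, to the fact that the $\chi^{\sigma}$ are honest characters whose sum is Galois-invariant and hence lies in $\mathbb{Q}$.
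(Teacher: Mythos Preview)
Your argument is correct. Note, however, that the paper does not supply its own proof of this corollary: it is simply quoted from \cite{Behravesh7} (Theorem~3.7 there), so there is nothing in the paper to compare against. Your proof is the standard one and would serve perfectly well as a self-contained justification. One small remark: the orthogonality step showing that $\mu<0$ when $\chi\neq 1_G$ is true and worth recording, but it is not strictly needed for non-negativity of $c(\chi)$, since in either case $c(\chi)(g)=\Psi(g)+|\mu|\ge \mu+|\mu|\ge 0$; your argument does, of course, yield the sharper fact that $c(\chi)$ actually attains the value $0$.
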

\begin{lem}[\cite{Behravesh7}, Corollary 3.11]\label{cor1}
Let $G$ be a finite group with a unique minimal normal subgroup. Then
\begin{enumerate}
\item[(1)] $c(G) = \textit{min} \Big\{c(\chi)(1): \chi ~\textit{is a faithful irreducible complex character of G} \Big\}$;
\item[(2)] $q(G) = \textit{min} \Big\{m_{\mathbb{Q}}(\chi)c(\chi)(1): \chi~ \textit{is a faithful irreducible complex character of G} \Big\}$.
\end{enumerate}
\end{lem}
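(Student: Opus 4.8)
The plan is to deduce this from Theorem 3.6, the two minimum formulas for $c(G)$ and $q(G)$ over index sets $I\subseteq\{1,\dots,r\}$ satisfying $K_I=1$ together with the minimality clause $K_J\neq 1$ for every $J\subsetneq I$; the point of the hypothesis is that it forces all admissible $I$ to be singletons.

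First I would record the elementary structural fact: writing $N$ for the unique minimal normal subgroup of $G$ (we may assume $G\neq 1$, the trivial case being vacuous), every nontrivial normal subgroup of $G$ contains $N$. Since each $K_i=\ker\psi_i$ is normal in $G$, it follows that for $I\subseteq\{1,\dots,r\}$ one has $K_I=\bigcap_{i\in I}K_i=1$ if and only if $K_i=1$ for some $i\in I$ (otherwise each $K_i$ contains $N$, whence $N\le K_I$, a contradiction). Now combine this with the minimality clause: pick $i_0\in I$ with $K_{i_0}=1$; if $|I|\ge 2$ then $J=\{i_0\}$ is a proper subset of $I$ with $K_J=1$, contradicting $K_J\neq1$. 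Hence every admissible $I$ in Theorem 3.6 is a singleton $\{i\}$ with $K_i=\ker\psi_i=1$, i.e. indexes a \emph{faithful} Galois conjugacy class $\mathcal C_i$. I would also note that this set of indices is nonempty: if no irreducible complex character of $G$ were faithful, then every $\ker\chi$ ($\chi\in\mathrm{Irr}(G)$) would be a nontrivial normal subgroup and hence contain $N$, contradicting $\bigcap_{\chi\in\mathrm{Irr}(G)}\ker\chi=1$.

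Next I would evaluate the objective functions on these singletons. For $I=\{i\}$ the $c(G)$-test function is $\xi=\Psi_i=\sum_{\chi\in\mathcal C_i}\chi=\sum_{\sigma\in\Gamma(\psi_i)}\psi_i^\sigma$, so $\xi(1)=|\Gamma(\psi_i)|\,\psi_i(1)=d(\psi_i)$ and $m(\xi)=\bigl|\min\{\sum_{\sigma\in\Gamma(\psi_i)}\psi_i^\sigma(g):g\in G\}\bigr|=m(\psi_i)$; by Corollary 3.7, $d(\psi_i)+m(\psi_i)=c(\psi_i)(1)$, so $\xi(1)+m(\xi)=c(\psi_i)(1)$. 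For $q(G)$ the test function is $\xi=m_i\Psi_i$ with $m_i=m_{\mathbb Q}(\psi_i)$ a positive integer; then $\xi(1)=m_i\,d(\psi_i)$ and, since scaling by the positive integer $m_i$ commutes with $\min$ and with $|\cdot|$, $m(\xi)=m_i\,m(\psi_i)$, giving $\xi(1)+m(\xi)=m_i\bigl(d(\psi_i)+m(\psi_i)\bigr)=m_{\mathbb Q}(\psi_i)\,c(\psi_i)(1)$. Finally, since $d(\chi)$, $m(\chi)$, $m_{\mathbb Q}(\chi)$ and hence $c(\chi)(1)$ depend only on the Galois conjugacy class of $\chi$, and since $\psi_i$ is faithful exactly when every member of $\mathcal C_i$ is faithful, minimizing over the faithful classes $\mathcal C_i$ is the same as minimizing over all faithful irreducible complex characters $\chi$ of $G$. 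This yields the two displayed formulas.

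The mathematics here is light; the only places that genuinely need care are the combinatorial reduction to singleton index sets (where the unique-minimal-normal-subgroup hypothesis and the minimality clause of Theorem 3.6 are both used) and the verification that multiplication by the positive integer $m_i$ scales $\xi(1)$ and $m(\xi)$ correctly. I do not anticipate a serious obstacle beyond bookkeeping.
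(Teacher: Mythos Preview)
Your argument is correct. The paper does not supply its own proof of this lemma; it is simply quoted from \cite{Behravesh7} (Corollary~3.11) without argument, so there is nothing in the paper to compare against. Your deduction from Theorem~3.6 is exactly the intended one: the unique minimal normal subgroup forces the admissible index sets $I$ to be singletons indexing faithful Galois classes, and the evaluation $\xi(1)+m(\xi)=c(\psi_i)(1)$ (resp.\ $m_{\mathbb Q}(\psi_i)c(\psi_i)(1)$) via Corollary~3.7 is straightforward. One tiny point you might make explicit for completeness is that every such singleton $\{i\}$ with $K_i=1$ is itself admissible, since its only proper subset is $\emptyset$ and $K_\emptyset=G\neq 1$; otherwise the argument is complete as written.
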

\begin{cor}
Let $G$ be a finite group with a unique minimal normal subgroup. If the Schur index of each irreducible complex character with respect to $\mathbb{Q}$ is equal to $1$, then $q(G) = c(G)$.
\end{cor}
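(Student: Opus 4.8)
The plan is to deduce this directly from Lemma \ref{cor1}. Since $G$ is assumed to have a unique minimal normal subgroup, part (1) of that lemma expresses $c(G)$ as the minimum of $c(\chi)(1)$ taken over the set $\mathcal{F}$ of faithful irreducible complex characters of $G$, and part (2) expresses $q(G)$ as the minimum of $m_{\mathbb{Q}}(\chi)\,c(\chi)(1)$ taken over the \emph{same} set $\mathcal{F}$. So the first step is simply to write down both formulas with the same index set $\mathcal{F}$.

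The second step is to invoke the hypothesis. By assumption $m_{\mathbb{Q}}(\chi) = 1$ for every irreducible complex character $\chi$ of $G$, in particular for every $\chi \in \mathcal{F}$. Hence $m_{\mathbb{Q}}(\chi)\,c(\chi)(1) = c(\chi)(1)$ for each such $\chi$, and therefore the two sets of (rational, indeed positive integral by the Corollary to Theorem 3.6, where $c(\chi)(1) = d(\chi)+m(\chi)$) numbers whose minima define $q(G)$ and $c(G)$ coincide term by term. Taking minima gives $q(G) = c(G)$.

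There is essentially no obstacle here: this is an immediate corollary, and the only point worth noting is that $\mathcal{F}$ is non-empty so that both minima are meaningful — but this is already implicit in Lemma \ref{cor1}, since a finite group with a unique minimal normal subgroup does possess a faithful irreducible complex character, and in any case $c(G), q(G) \le p(G) \le |G|$ are finite. Thus the proof is one line once Lemma \ref{cor1} is in hand.
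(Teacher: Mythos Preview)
Your proposal is correct and matches the paper's approach: the paper states this corollary immediately after Lemma~\ref{cor1} without proof, so it is intended as the one-line deduction you give. Your observation that the two minima in Lemma~\ref{cor1} coincide term by term when every $m_{\mathbb{Q}}(\chi)=1$ is exactly the implicit argument.
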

The following result is known, but for the sake of completeness, we
include a proof.
\begin{lem}
Let $G = \langle{a}\rangle \cong C_{p^{n}}$. For $m \geq 1$, let $\Phi_{m}(X)$ denote the $m$-th cyclotomic polynomial and $\zeta_{m}$ a primitive $m$-th root of unity. Then we have the following description of inequivalent irreducible $\mathbb{Q}$-representations of $G$.
\begin{itemize}
\item[(i)] $\displaystyle X^{p^{n}} - 1 = \prod^{n}_{i = 0}\Phi_{p^{i}}(X)$ is the decomposition into monic irreducible polynomials over $\mathbb{Q}$.
\item[(ii)] The set of isomorphism types of irreducible $\mathbb{Q}$-representations of $G$ is in a bijective correspondence with the set of monic irreducible factors $\{\Phi_{1}(X), \Phi_{p}(X), \dots, \Phi_{p^{n}}(X)\}$ of $X^{p^{n}} - 1$ over $\mathbb{Q}$.
\item[(iii)] If $\rho_{0}, \rho_{1}, \dots, \rho_{n}$ are $n + 1$ inequivalent irreducible representations of $G$ over $\mathbb{Q}$, then a matrix representation of $\rho_{i}$ is defined by
$$\rho_{i}(a) = C_{\Phi_{p^i}(X)}, i = 0,1, \dots, n,$$
where $C_{\Phi_{p^i}(X)}$ denotes the companion matrix of $\Phi_{p^i}(X)$.
\item[(iv)] $G$ has a unique faithful irreducible representation over $\mathbb{Q}$. If $\chi$ is the unique faithful irreducible character of $G$ over $\mathbb{Q}$ then$$
\chi(a^i) =
  \begin{cases}
    -p^{n-1} & \quad \text{ if } (i,p^n)=p^{n-1}\\
    p^{n-1}(p-1) & \quad \text{ if } i=0\\
    0 & \quad \text{otherwise}.
  \end{cases}
$$
\end{itemize}
\end{lem}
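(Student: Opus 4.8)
The plan is to pass to the group algebra. Since $G$ is cyclic of order $p^n$, there is an isomorphism of $\mathbb{Q}$-algebras $\mathbb{Q}[G] \xrightarrow{\ \sim\ } \mathbb{Q}[X]/(X^{p^n}-1)$ carrying $a$ to the class of $X$, so all four parts reduce to understanding the right-hand side. For (i), I would invoke the standard identity $X^m - 1 = \prod_{d\mid m}\Phi_d(X)$ with $m = p^n$, whose positive divisors are precisely $p^0, p^1, \dots, p^n$; irreducibility of each $\Phi_{p^i}$ over $\mathbb{Q}$ is the classical fact, obtained by applying Eisenstein's criterion at $p$ to $\Phi_{p^i}(X+1)$ using $\Phi_{p^i}(X) = \Phi_p(X^{p^{i-1}})$. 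A degree count $1 + \sum_{i=1}^n \phi(p^i) = 1 + (p-1)(1+p+\cdots+p^{n-1}) = p^n$ confirms that no factor is missing.

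For (ii) and (iii), the Chinese Remainder Theorem together with (i) gives $\mathbb{Q}[G] \cong \prod_{i=0}^n \mathbb{Q}[X]/(\Phi_{p^i}(X)) \cong \prod_{i=0}^n \mathbb{Q}(\zeta_{p^i})$, a product of $n+1$ fields; this is the Wedderburn decomposition of $\mathbb{Q}[G]$. Hence the simple $\mathbb{Q}[G]$-modules, equivalently the isomorphism types of irreducible $\mathbb{Q}$-representations of $G$, are exactly the $n+1$ fields $\mathbb{Q}(\zeta_{p^i})$ for $0 \le i \le n$; they are pairwise inequivalent, being distinct Wedderburn components, and this matches them bijectively with the factors $\Phi_{p^i}(X)$. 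Writing $\rho_i$ for the action of $G$ on $\mathbb{Q}[X]/(\Phi_{p^i}(X))$ and using the $\mathbb{Q}$-basis $1, X, \dots, X^{\deg \Phi_{p^i}-1}$, the matrix of $\rho_i(a)$ (multiplication by $X$) is by definition the companion matrix $C_{\Phi_{p^i}(X)}$, which is (iii).

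For (iv), $\ker \rho_i$ is the kernel of the homomorphism $a \mapsto \zeta_{p^i}$, an element of order $p^i$, so $\rho_i$ is faithful if and only if $i = n$; thus $\rho_n$ is the unique faithful irreducible $\mathbb{Q}$-representation and $\chi := \chi_n$ is its character. To evaluate $\chi$ I would telescope: for $0 \le t \le n$ the sum $\chi_0 + \chi_1 + \cdots + \chi_t$ is the character of the pullback to $G$ of the regular $\mathbb{Q}$-representation of $G/\langle a^{p^t}\rangle \cong C_{p^t}$, because $\mathbb{Q}[C_{p^t}] \cong \prod_{i=0}^t \mathbb{Q}(\zeta_{p^i})$ by the same CRT argument, and this pulled-back character takes the value $p^t$ at $a^j$ when $a^j \in \langle a^{p^t}\rangle$, i.e. when $p^t \mid j$, and $0$ otherwise. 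Subtracting, $\chi = \chi_n = (\chi_0 + \cdots + \chi_n) - (\chi_0 + \cdots + \chi_{n-1})$ equals $p^n - p^{n-1} = p^{n-1}(p-1)$ at $j = 0$, equals $0 - p^{n-1} = -p^{n-1}$ when $p^{n-1} \mid j$ but $p^n \nmid j$ (that is, $(j,p^n) = p^{n-1}$), and equals $0$ in all remaining cases. An alternative is to diagonalize $C_{\Phi_{p^n}}$ over $\mathbb{C}$, obtaining the Ramanujan sum $\chi(a^j) = \sum_{\zeta} \zeta^j$ over primitive $p^n$-th roots $\zeta$, and to evaluate it directly.

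I do not expect a genuine obstacle here: the only nonroutine input is the irreducibility of $\Phi_{p^i}$ over $\mathbb{Q}$, which is classical and can be cited, and the remaining work is the bookkeeping in (iv), where the point requiring care is the split into the three ranges $(j,p^n) = p^n$, $(j,p^n) = p^{n-1}$, and $(j,p^n) \le p^{n-2}$.
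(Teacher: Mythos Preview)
Your argument is correct, and for parts (i)--(iii) it is essentially identical to the paper's: both pass to $\mathbb{Q}[G]\cong\mathbb{Q}[X]/(X^{p^n}-1)$, apply the Chinese Remainder Theorem to obtain the Wedderburn decomposition $\bigoplus_{i=0}^n \mathbb{Q}[X]/(\Phi_{p^i})$, and read off the companion-matrix model for each $\rho_i$ from the standard basis. For (iv) there is a mild difference worth noting. The paper establishes uniqueness by observing that the $\phi(p^n)$ faithful complex characters of $C_{p^n}$ form a single Galois orbit over $\mathbb{Q}$, and then computes $\chi$ by taking traces of powers of $C_{\Phi_{p^n}(X)}$---in effect, the Ramanujan-sum approach you mention as your alternative. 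Your primary route instead identifies $\ker\rho_i$ directly as $\langle a^{p^i}\rangle$ and evaluates $\chi$ by the telescoping identity $\sum_{i\le t}\chi_i = $ (pullback of the regular character of $C_{p^t}$); this is a clean and self-contained computation that avoids any appeal to the complex picture, at the cost of one extra observation about which $\rho_i$ factor through each quotient. Both are short, and either suffices.
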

\begin{proof}
The proof of $(i)$ follows from the well known result by Gauss that cyclotomic polynomials are irreducible over $\mathbb{Q}$.

Now we prove $(ii)$. Consider the map $\psi : \mathbb{Q}[X] \rightarrow \mathbb{Q}[G]$, and is defined by: $f(X) \mapsto f(a)$. Since $\psi$ is a ring epimorphism, $\mathbb{Q}[G] \cong \mathbb{Q}[X]/\langle{Ker(\psi)}\rangle$. So, we have
$$\displaystyle \mathbb{Q}[G] \cong \frac{\mathbb{Q}[X]}{\langle{X^{p^{n}} - 1}\rangle} = \frac{ \mathbb{Q}[X]}{\displaystyle \langle{\prod^{n}_{i = 0}\Phi_{p^{i}}(X)}\rangle}.$$ From the Chinese remainder theorem, we can write
$$\displaystyle \mathbb{Q}[G] \cong \displaystyle \bigoplus^{n}_{i = 0} \frac{ \mathbb{Q}[X]}{\langle{\Phi_{p^{i}}(X)}\rangle}.$$
Note that under the above isomorphism, $a$ is mapped to $(X + \langle{\Phi_{1}(X)}\rangle, X + \langle{\Phi_{p}(X)}\rangle, \dots,$ $ X + \langle{\Phi_{p^n}(X)}\rangle)$. As $\Phi_{p^{i}}(X)$ is an irreducible polynomial over $\mathbb{Q}$, then $\mathbb{Q}[X]/\langle{\Phi_{p^{i}}(X)}\rangle$ is a field. Hence $\mathbb{Q}[G]$ has $n + 1$ simple components $\mathbb{Q}, \mathbb{Q}(\zeta_{p}), \dots, \mathbb{Q}(\zeta_{p^n})$, as a consequence of that $G$ has $n + 1$ irreducible representations over $\mathbb{Q}$, and also they have a bijective correspondence with the set of monic irreducible factors $\{\Phi_{1}(X), \Phi_{p}(X),$ $\dots, \Phi_{p^{n}}(X)\}$ of $X^{p^{n}} - 1$ over $\mathbb{Q}$. This completes the proof.

Let $\rho_{i}$ be the irreducible representation corresponding to the simple component $\displaystyle \frac{\mathbb{Q}[X]}{\langle{\Phi_{p^{i}}(X)}\rangle}$. We consider $\displaystyle \mathbb{Q}[X]/{\langle{\Phi_{p^{i}}(X)}\rangle}$ as the representation space of $\rho_{i}$. Then $\rho_{i}$ is defined explicitly as follows.
$$\displaystyle \rho_{i}(a): \frac{ \mathbb{Q}[X]}{\langle{\Phi_{p^{i}}(X)}\rangle} \rightarrow \frac{ \mathbb{Q}[X]}{\langle{\Phi_{p^{i}}(X)}\rangle}~ \textrm{acts~by~ multiplication~of~} [X] = X + \langle{\Phi_{p^{i}}(X)}\rangle.$$ Note that the degree of the representation $\rho_{i}$ is equal to the degree of $\Phi_{p^{i}}(X)$, that is, $\phi(p^i)$. We can take $\displaystyle [1], [X], \dots, [X^{\phi(p^i)-1}]$ to be an ordered basis of $\frac{ \mathbb{Q}[X]}{\langle{\Phi_{p^{i}}(X)}\rangle}$, and with respect to that basis the matrix for $\rho_{i}(a)$ is  $C_{\Phi_{p^i}(X)},$
where $C_{\Phi_{p^i}(X)}$ denotes the companion matrix of $\Phi_{p^i}(X)$. This completes the proof of $(iii)$.

It is easy to see that $G$ has $\phi(p^n)$ many faithful irreducible
complex representations, and they are all Galois conjugates with
respect to $\mathbb{Q}$. Their direct sum gives the unique faithful
irreducible representation of $G$ over $\mathbb{Q}$. The values of
the character $\chi$ corresponding to the unique faithful
irreducible $\mathbb{Q}$-representation can be obtained from
$(iii)$, by taking traces of the various powers of the companion
matrix $C_{\Phi_{p^n}(X)}$. This completes the proof of the theorem.
\end{proof}
\vskip 5mm

\section{Wigner-Mackey Method of Little Groups}
\vskip 5mm

\noindent The groups we study are holomorphs that are semi-direct
products of a normal abelian subgroup by a subgroup. To find the set
of irreducible complex representations of such a group, we describe
the Wigner-Mackey method of little groups.

Let $G$ be a semidirect product of a normal abelian subgroup $N$ by
a subgroup $H$. Since $N$ is abelian, the set of all inequivalent
irreducible complex representations of $N$ are $1$-dimensional. Let
$\widehat{N}$ denote the set of all complex irreducible
representations of $N$.\\ Now $G$ acts on $N$ by conjugation, and
this action induces an action on $\widehat{N}$ as follows:
$$\chi \in \widehat{N}, g \in G,~ \textrm{for~all}~a \in N, \textrm{we~have}~ \chi^{g}(a) = \chi(g^{-1}ag).$$
By the definition of conjugate representations, $\chi^{g}$ is the same as conjugate representation of $\chi$ by $g \in G$. Let $I_{\chi}$ be the stabiliser subgroup of $\chi$ in $G$. Note that since $N$ is abelian, we have that $N \leq I_{\chi}$. Let $H_{\chi}:= I_{\chi} \cap H$. Then $I_{\chi}$ is a semidirect product of $N$ by $H_{\chi}$. It can be shown that any $\chi \in \widehat{N}$ can be extended to a homomorphism from $I_{\chi}$ to $\mathbb{C}^{*}$, in such a way that this extended homomorphism is the trivial map when restricted to $H_{\chi}$. So we can regard $\chi$ as an element of $\widehat{I_{\chi}}$, the set of all one dimensional complex representations of $I_{\chi}$.

Further if $\rho$ is a complex representation of $H_{\chi}$ and the canonical projection of $I_{\chi}$ on $H_{\chi}$ is composed with $\rho$, then we get a representation of $I_{\chi}$. Thus we can form the tensor product representation $\chi \otimes \rho$ of $I_{\chi}$ and deg$(\chi\otimes \rho) = $ deg $\rho$.
\begin{thm}(\cite{Serre}, Proposition 2.5)
Let $G$ be a finite group which is a semi-direct product of an abelian group $N$ by a subgroup $H$. Let $\mathcal{O}_{1}, \mathcal{O}_{2}, \dots, \mathcal{O}_{t}$ be the distinct orbits under the action of $G$ on $\widehat{N}$ and let $\chi_{j}$ be a representative of the orbit $O_{j}$. Let $I_{j}$ denote the stabiliser of $\chi_{j} $ and let $H_j:= I_j\cap H$. For any irreducible representation of $H_{j}$, let $\theta_{j, \rho}$ denote the representation of $G$ induced from the irreducible representation $\chi_{j} \otimes \rho$ of $I_{j}$. Then
\begin{enumerate}
\item $\theta_{j, \rho}$ is irreducible.
\item If $\theta_{j, \rho}$ and $\theta_{j^{'}, \rho^{'}}$ are isomorphic, then $j = j^{'}$ and $\rho$ is isomorphic to $\rho^{'}$.
\item Every irreducible representation of $G$ is isomorphic to one of the $\theta_{j, \rho}$.
\end{enumerate}
\end{thm}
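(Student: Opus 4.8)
The plan is to run the standard Mackey machinery, i.e. prove this directly from Mackey's irreducibility criterion and Mackey's double-coset formulas (all available in \cite{Serre}). Write $G=N\rtimes H$ with $N$ abelian and normal, fix an orbit representative $\chi_j\in\widehat N$ with stabiliser $I_j=N\rtimes H_j$. As recalled just before the statement, $\chi_j$ extends to a linear character $\widetilde{\chi_j}$ of $I_j$ trivial on $H_j$; the one point requiring care is that $\widetilde{\chi_j}(nh):=\chi_j(n)$ really is a homomorphism, and this uses exactly that $\chi_j^{h}=\chi_j$ for $h\in H_j$ (so $\widetilde{\chi_j}(n_1h_1\cdot n_2h_2)=\chi_j(n_1)\chi_j(h_1 n_2 h_1^{-1})=\chi_j(n_1)\chi_j(n_2)$). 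Inflating an irreducible $\rho\in\mathrm{Irr}(H_j)$ along $I_j\twoheadrightarrow H_j$ and tensoring, $W_{j,\rho}:=\widetilde{\chi_j}\otimes\rho$ is an irreducible representation of $I_j$ with $\dim W_{j,\rho}=\dim\rho$ and $\mathrm{Res}^{I_j}_N W_{j,\rho}\cong\chi_j^{\oplus\dim\rho}$, and $\theta_{j,\rho}=\mathrm{Ind}_{I_j}^G W_{j,\rho}$.

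For (1) I would apply Mackey's irreducibility criterion: $\mathrm{Ind}_{I_j}^G W_{j,\rho}$ is irreducible if and only if $W_{j,\rho}$ is irreducible (clear, being a linear character times an irreducible) and, for every $s\in G\setminus I_j$, the representations ${}^sW_{j,\rho}$ and $W_{j,\rho}$ of $I_j\cap sI_js^{-1}$ are disjoint. The key observation is that $N$, being normal, lies in $I_j\cap sI_js^{-1}$, and there $W_{j,\rho}$ is $\chi_j$-isotypic while ${}^sW_{j,\rho}$ is $\chi_j^{s}$-isotypic, where $\chi_j^{s}(n)=\chi_j(s^{-1}ns)$. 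Since $s\notin I_j$ we have $\chi_j^{s}\ne\chi_j$ in $\widehat N$, so these restrictions to $N$ already share no irreducible constituent; a fortiori the same holds over $I_j\cap sI_js^{-1}$. Hence $\theta_{j,\rho}$ is irreducible.

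For (2) I would first restrict to $N$. Using Mackey's restriction formula with $N\trianglelefteq G$ and $N\le I_j$ (so $N\backslash G/I_j\cong G/I_j$), one gets $\mathrm{Res}^G_N\theta_{j,\rho}\cong(\dim\rho)\bigoplus_{\psi\in\mathcal O_j}\psi$, so the characters of $N$ occurring in $\theta_{j,\rho}$ are precisely the orbit $\mathcal O_j$; thus $\theta_{j,\rho}\cong\theta_{j',\rho'}$ forces $\mathcal O_j=\mathcal O_{j'}$, i.e. $j=j'$. Given $j=j'$, I would compute $\langle\theta_{j,\rho},\theta_{j,\rho'}\rangle_G$ via Mackey's inner-product formula as a sum over $I_j\backslash G/I_j$: the double coset of any $s\notin I_j$ contributes $0$ by exactly the $N$-disjointness used in (1), while the trivial double coset contributes $\langle W_{j,\rho},W_{j,\rho'}\rangle_{I_j}=\langle\rho,\rho'\rangle_{H_j}$, because $\widetilde{\chi_j}$ has absolute value $1$ and $\rho,\rho'$ are inflated from $H_j=I_j/N$. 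So $\langle\theta_{j,\rho},\theta_{j,\rho'}\rangle_G=\langle\rho,\rho'\rangle_{H_j}$, which is $1$ precisely when $\rho\cong\rho'$; with (1) this gives (2).

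For (3) I would count dimensions. By (1)--(2) the $\theta_{j,\rho}$, over all $j$ and all $\rho\in\mathrm{Irr}(H_j)$, are pairwise non-isomorphic irreducibles, so it suffices to check $\sum_{j}\sum_{\rho\in\mathrm{Irr}(H_j)}(\dim\theta_{j,\rho})^2=|G|$. Since $\dim\theta_{j,\rho}=[G:I_j]\dim\rho$, $\sum_{\rho\in\mathrm{Irr}(H_j)}(\dim\rho)^2=|H_j|$, $|I_j|=|N||H_j|$ and $[G:I_j]=|\mathcal O_j|$, the left side equals $\sum_j[G:I_j]^2|H_j|=\sum_j[G:I_j]\cdot\tfrac{|G|}{|N|}=\tfrac{|G|}{|N|}\sum_j|\mathcal O_j|=\tfrac{|G|}{|N|}\,|\widehat N|=|G|$, using that the orbits partition $\widehat N$ and $|\widehat N|=|N|$; hence the $\theta_{j,\rho}$ exhaust $\mathrm{Irr}(G)$. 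The main obstacle is the double-coset bookkeeping in Mackey's restriction and inner-product formulas (and, at the very start, the verification that $\widetilde{\chi_j}$ is a homomorphism of $I_j$); once the single observation "$s\notin I_j\Rightarrow\chi_j^{s}\ne\chi_j$ on $N$" is isolated, the rest is formal.
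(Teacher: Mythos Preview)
Your argument is correct. Note, however, that the paper does not supply its own proof of this statement: it is simply quoted from \cite{Serre} (Proposition~25 in Serre's book) and used as a black box, so there is nothing in the paper to compare your proof against. Your treatment via Mackey's irreducibility criterion, Mackey's restriction/intertwining formula, and the dimension count is essentially a streamlined repackaging of Serre's original proof; Serre argues more directly (computing $\langle\theta_{j,\rho},\theta_{j,\rho}\rangle_G=1$ by hand via Frobenius reciprocity and the $N$-isotypic decomposition rather than naming Mackey's criterion), but the substantive step---that $s\notin I_j$ forces $\chi_j^{\,s}\neq\chi_j$ on $N$, killing all cross terms---is identical in both.
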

\vskip 5mm

\section{A Presentation of Hol$(C_{p^{n}})$, $p$ a Prime}
\vskip 5mm

In this section, we describe presentations of Hol$(C_{p^{n}})$ which
will be useful to us;  we consider the cases of $p$ odd and $p =2$
separately. We first note:

\begin{remark}

\begin{itemize}
\item[(1)] For $p$ odd, $ \textrm{Hol}(C_{p^{n}}) \cong C_{p^{n}} \rtimes C_{p^{n-1}(p-1)}$ is a split metacyclic group of order $p^{2n-1}(p-1)$.
\item[(2)] For $n \geq 3$, $ \textrm{Hol}(C_{2^{n}}) \cong C_{2^{n}} \rtimes (C_{2^{n-2}} \times C_{2})$ is a metabelian $2$-group (which
is not metacyclic) of order $2^{2n - 1}$.
\end{itemize}
\end{remark}

Let $p$ be an odd prime and $C_{p^{n}} = \langle{x}\rangle$. Then
$\textrm{Aut}(C_{p^{n}}) = \langle{f_{r} | f_{r}: x \mapsto
x^{r}}\rangle,$ where $r$ is coprime to $p^{n}$. It is well known
that Aut$(C_{p^{n}}) \cong ({\mathbb{Z}/p^{n}\mathbb{Z}})^{*}$, the
group of units of the ring $\mathbb{Z}/p^{n}\mathbb{Z}$ and
Aut$(C_{p^{n}}) \cong A \times B$, where $A = \langle {\tau: x
\mapsto x^{s}}\rangle \cong C_{p-1}$, $s$ coprime to $p-1$ and $B =
\langle{\sigma: x \mapsto x^{1+p}}\rangle \cong C_{p^{n-1}}$.

Then, for odd primes $p$, $\textrm{Hol}(C_{p^{n}})$ has a
presentation:
$$\textrm{Hol}(C_{p^{n}}) = \langle{a, b\,|\, a^{p^{n}} = b^{p^{n-1}(p-1)} = 1, b^{-1}ab = a^r}\rangle,$$
where $p^{n-1}(p-1)$ is the order of $r$ modulo $p^{n}$.

Let $p =2$ and $C_{2^{n}} = \langle{x}\rangle$. Then
$\textrm{Aut}(C_{2^{n}}) = \langle{f_{r} \,|\, f_{r}: x \mapsto
x^{r}}\rangle,$ where $r$ is coprime to $2^{n}$.\\ For $n \geq 3$,
Aut$(C_{2^{n}}) \cong ({\mathbb{Z}/2^{n}\mathbb{Z}})^{*}$, the group
of units of the ring $\mathbb{Z}/2^{n}\mathbb{Z}$ and
Aut$(C_{2^{n}}) \cong A \times B$, where $A = \langle{\sigma: x
\mapsto x^{5}}\rangle \cong C_{2^{n-2}}$ and $B = \langle {\tau: x
\mapsto x^{-1}}\rangle \cong C_{2}$.

Then, for $n \geq 3$, $ \textrm{Hol}(C_{2^{n}})$ has a presentation:
$$\textrm{Hol}(C_{2^{n}}) = \langle{a, b, c \,|\, a^{2^{n}} = b^{2^{n-2}} = c^{2} = 1, b^{-1}ab = a^{5}, c^{-1}ac = a^{-1}, c^{-1}bc = b}\rangle.$$
\vskip 5mm

\section{Complex Representations of Hol$(C_{p^{n}})$, $p$ Odd}
\vskip 5mm

\noindent In this section, we find the set of all inequivalent
irreducible representations of Hol$(C_{p^{n}})$, $p$ odd, over
$\mathbb{C}$.

Let $G = $ Hol$(C_{p^{n}})$, $p$ odd. As we have seen before $G$ has
a presentation:
$$G=\langle a,b\,\,|\,\, a^{p^n}=1,\,\,\,\, b^{p^{n-1}(p-1)}=1,
\,\,\,\,b^{-1}ab = a^r\rangle,$$
where $r$ has multiplicative order $p^{n-1}(p-1)$ modulo $p^{n}$.

Let $N = \langle{a}\rangle$. Let $\widehat{N}$ denote the set of all irreducible complex representations of $N$, and which are given by
$$\widetilde \chi_{i}(a) = \zeta^{i}, i = 0,1 ,2, \dots, p^n -1,$$
where $\zeta$ is a primitive $p^n$-th root of unity.\\
Now, $G$ acts on $\widehat{N}$ by conjugation, with the subgroup $N$
acting trivially. So, this induces an action of $G/N$ on
$\widehat{N}$. There are $n + 1$ orbits, say, $\mathcal{O}_0,
\mathcal{O}_1, \dots, \mathcal{O}_n$ which are described as follows.

\begin{align*}
\mathcal{O}_0 :=  & \mathrm{the~orbit~of}~\widetilde \chi_{0}.
\end{align*}

\begin{align*}
{\rm For}~1 \leq k \leq n, \mathcal{O}_k := &
\mathrm{the~orbit~of}~\widetilde \chi_{p^{n-k}}.
\end{align*}

Explicitly,
\begin{align*}
\mathcal{O}_0 = & \{\widetilde \chi_{0}\}.
\end{align*}
and
\begin{align*}
\mathcal{O}_k = & \{\widetilde \chi_{p^{n-k}}, \widetilde \chi_{r.{p^{n-k}}}, \dots, \widetilde \chi_{r^{\phi(p^k)-1}.{p^{n-k}}}\}, k = 1,2, \dots, n.
\end{align*}
Further, the stabilizer subgroup of $\widetilde \chi_{0}$ is $G$
itself and the stabilizer subgroup of $\widetilde \chi_{p^{n-k}}$
(for $1 \leq k \leq n$) is $\langle{a,
b^{p^{(n-1)-(n-k)}(p-1)}}\rangle.$

Corresponding to the the orbit $\mathcal{O}_0$, the irreducible
complex representations of $G$ are given by
$$\widetilde \rho_{0, \omega}(a) = 1, \widetilde \rho_{0, \omega}(b) = \omega,$$
where $\omega$ is any $p^{n-1}(p-1)$-th root of unity. In this
manner, we obtain $\phi(p^{n})$ degree $1$ representations of $G$;
these exhaust all the degree $1$ complex representations of $G$.

Further, corresponding to each orbit $\mathcal{O}_k$ (for $1 \leq k
\leq n$), there are $o(b)/|\mathcal{O}_k|$ irreducible complex
representations of $G$, and they are of degree $\phi(p^k)$. These
irreducible complex representations are given explicitly as:
\begin{align*}
\widetilde \rho_{k,\omega}(a)=
\begin{bmatrix}
\zeta^{p^{n-k}} & 0 & 0 & \cdots & 0\\
0 & \zeta^{rp^{n-k}} & 0 &  \cdots & 0\\
0 & 0 & \zeta^{r^2p^{n-k}} & \cdots & 0\\
\vdots & \vdots & \vdots & \ddots & \\
0 & 0 & 0 &  \cdots & \zeta^{r^{\phi(p^k)-1}p^{n-k}}
\end{bmatrix},\hskip5mm
\widetilde \rho_{k,\omega}(b)=
\begin{bmatrix}
0 & 0 & 0 & \cdots & \omega\\
1 & 0 & 0 & \cdots & 0\\
0 & 1 & 0 & \cdots & 0\\
\vdots & & & \ddots & \vdots\\
0 & 0 & \cdots & 1 & 0
\end{bmatrix},
\end{align*}
where $\zeta$ is a primitive $p^{n}$-th root of unity and $\omega$ is any $o(b)/|\mathcal{O}_k| = p^{n-k}$-th root of unity.

\noindent Noting that $\phi(p^n).1 + \sum^{n}_{k =
1}p^{n-k}|\phi(p^k)|^2 = |G|$, it follows that $\widetilde\rho_{k,
w}$'s (for $0 \leq k \leq n$) exhaust all the inequivalent irreducible complex representations of $G$.

We summarise the above discussion in the following theorem.

\begin{thm}\label{Theorem $6.0.1$}
Let $p$ be an odd prime and consider the group  $G = $
Hol$(C_{p^{n}})$ for $n \geq 1$. Consider the presentation of $G$:
$$G=\langle a,b\,\,|\,\, a^{p^n}=1,\,\,\,\, b^{p^{n-1}(p-1)}=1,
\,\,\,\,b^{-1}ab = a^r\rangle,$$ where $r$ has multiplicative order
$p^{n-1}(p-1)$ modulo $p^n$. Then, with notations $\widetilde
\rho_{{k}, \omega}$, and $\mathcal{O}_k$, $0 \leq k \leq n$ as
above, we have the following.

\begin{itemize}
\item[1.] $G$ has $\phi(p^n)$ many degree $1$ complex representations, and they are: $\widetilde \rho_{{0}, \omega}$, where $\omega$ varies over the set of $p^{n-1}(p-1)$-th roots of unity.
\item[2.] Corresponding to orbit $\mathcal{O}_k$, $k = 1,2, \dots, n$, $G$ has $p^{n-k}$ number of irreducible complex representations of degree $\phi(p^k)$ and they are: $\widetilde \rho_{{k}, \omega}$, where $\omega$ varies over the set of $p^{n-k}$-th roots of unity.
\item[3.] $G$ has $(1+p+\cdots + p^{n}) - p^{n-1}$ many inequivalent irreducible complex representations.
\item[4.] Corresponding to the orbit $\mathcal{O}_n$, there is only one irreducible complex representation, which is of degree is $\phi(p^n)$. In fact, this is the unique faithful irreducible complex representation of $G$.
\end{itemize}
\end{thm}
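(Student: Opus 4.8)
The plan is to realize $\widehat{N}$ as $\mathbb{Z}/p^n\mathbb{Z}$ via $\widetilde\chi_i\leftrightarrow i$, then analyze the action of $G/N\cong\langle b\rangle$ which acts by multiplication by $r$. First I would verify that $\widetilde\chi_i^{\,b}=\widetilde\chi_{ri}$, so the orbits of $G/N$ on $\widehat N$ are exactly the orbits of multiplication-by-$r$ on $\mathbb{Z}/p^n\mathbb{Z}$. Since $r$ has order $p^{n-1}(p-1)$ mod $p^n$, and the subgroups of the cyclic group $\mathbb{Z}/p^n\mathbb{Z}$ are the $p^{n-k}\mathbb{Z}/p^n\mathbb{Z}$ for $0\le k\le n$, a short computation with the order of $r$ modulo $p^k$ (which is $\phi(p^k)$ because $r$ generates $(\mathbb{Z}/p^n\mathbb{Z})^\ast$ and hence generates each quotient $(\mathbb{Z}/p^k\mathbb{Z})^\ast$) shows the orbit of $\widetilde\chi_{p^{n-k}}$ has size $\phi(p^k)$ and the orbits partition $\widehat N$ as $\mathcal O_0\sqcup\mathcal O_1\sqcup\cdots\sqcup\mathcal O_n$; this gives the orbit description and, via Wigner–Mackey, exactly $n+1$ families of irreducibles.

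Next I would compute the stabilizer $I_k$ of $\widetilde\chi_{p^{n-k}}$: since $\langle b\rangle$ acts through its quotient of order $\phi(p^k)$ acting faithfully on the orbit, the stabilizer of a point in $\langle b\rangle$ is the unique subgroup of index $\phi(p^k)$, namely $\langle b^{\phi(p^k)}\rangle=\langle b^{p^{(n-1)-(n-k)}(p-1)}\rangle$, and hence $I_k=\langle a,\ b^{p^{k-1}(p-1)}\rangle$ with $H_k:=I_k\cap H=\langle b^{p^{k-1}(p-1)}\rangle\cong C_{p^{n-k}}$. I would check that $I_k=N\rtimes H_k$ is in fact \emph{abelian} (the conjugation action of $H_k$ on $N$ is trivial precisely because $H_k$ stabilizes $\widetilde\chi_{p^{n-k}}$, which forces $r^{\phi(p^k)}\equiv 1\pmod{p^k}$ only — care is needed here, and the correct statement is that $\widetilde\chi_{p^{n-k}}$ extends to a linear character of $I_k$, not that $I_k$ is abelian). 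So each $1$-dimensional representation of $I_k$ extending $\widetilde\chi_{p^{n-k}}$ is $\widetilde\chi_{p^{n-k}}\otimes\rho$ where $\rho$ runs over the $p^{n-k}$ characters of $H_k\cong C_{p^{n-k}}$, indexed by a $p^{n-k}$-th root of unity $\omega$; inducing up to $G$ gives $\widetilde\rho_{k,\omega}$ of degree $[G:I_k]=\phi(p^k)$, and writing the induced representation in a basis of coset representatives $1,b,\dots,b^{\phi(p^k)-1}$ yields exactly the displayed monomial matrices for $\widetilde\rho_{k,\omega}(a)$ and $\widetilde\rho_{k,\omega}(b)$ — the diagonal entries $\zeta^{r^j p^{n-k}}$ coming from $(\widetilde\chi_{p^{n-k}})^{b^j}(a)$ and the companion-type shift with the $\omega$ in the corner coming from $b\cdot b^{\phi(p^k)-1}=b^{\phi(p^k)}\in H_k$ acting by $\omega$.

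Then parts (1) and (2) of the theorem follow directly from Proposition 2.5 (Serre) applied to this data, together with the observation that $\mathcal O_0$ gives the $\phi(p^n)$ linear characters $\widetilde\rho_{0,\omega}$ (these are exactly the characters of $G/N'$ where $N'=[G,G]$, or more simply the characters of $G$ with $N$ in the kernel, hence the characters of $G/N\cong C_{p^{n-1}(p-1)}$). For part (3), I would sum the counts: $\phi(p^n)+\sum_{k=1}^n p^{n-k}=p^{n-1}(p-1)+(p^{n-1}+p^{n-2}+\cdots+1)=p^n-p^{n-1}+\frac{p^n-1}{p-1}$, and verify this equals $(1+p+\cdots+p^n)-p^{n-1}$; and I would independently check the dimension identity $\phi(p^n)\cdot 1+\sum_{k=1}^n p^{n-k}\phi(p^k)^2=|G|=p^{2n-1}(p-1)$ as a sanity check that we have found all irreducibles (this is already noted in the text). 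For part (4), $k=n$ gives a single representation ($p^{n-n}=p^0=1$ choice of $\omega$, forced to be $\omega=1$) of degree $\phi(p^n)$; faithfulness follows because $\widetilde\rho_{n,1}$ restricted to $N=\langle a\rangle$ contains the faithful character $\widetilde\chi_1$ of $C_{p^n}$ (so $\ker\cap N=1$) and the orbit $\mathcal O_n$ has size $\phi(p^n)=|G/N|=o(b)$, so $b$ acts as a genuine $\phi(p^n)$-cycle on the basis (no power $b^j$ with $0<j<o(b)$ fixes the basis, forcing $\ker$ to meet $\langle b\rangle$ trivially as well), and since $G=N\langle b\rangle$ a normal subgroup meeting both $N$ and a complement of $N$ trivially — more carefully, since $\ker\widetilde\rho_{n,1}\cap N=1$ and $N$ is the unique minimal normal subgroup (as $N$ is the unique minimal normal subgroup of $G$, a fact I would record separately or cite), any nontrivial normal subgroup contains $N$, contradiction. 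The main obstacle I anticipate is the bookkeeping in the induced-representation matrix computation — correctly tracking how the chosen extension of $\widetilde\chi_{p^{n-k}}$ to $I_k$ interacts with the coset action of $b$ to produce precisely the stated matrices with the root of unity $\omega$ in the $(1,\phi(p^k))$ entry — and making sure the extension exists and is normalized so that it is trivial on $H_k$, as the Wigner–Mackey setup requires.
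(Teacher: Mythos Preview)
Your approach is correct and essentially identical to the paper's: apply Wigner--Mackey to the orbit decomposition of $\widehat N$ under multiplication by $r$ on $\mathbb{Z}/p^n\mathbb{Z}$, compute the stabilizers $I_k=\langle a,\,b^{p^{k-1}(p-1)}\rangle$, write out the induced monomial matrices, and verify completeness via the squared-dimension identity. One small slip in your faithfulness argument for part~(4): $N$ itself is not the unique minimal normal subgroup of $G$ (it is not minimal); the unique minimal normal subgroup is the order-$p$ subgroup $\langle a^{p^{n-1}}\rangle\subset N$, and with this correction your argument that $\ker\widetilde\rho_{n,1}\cap N=1$ forces $\ker\widetilde\rho_{n,1}=1$ goes through unchanged.
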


We briefly illustrate this theorem through an example.

\begin{example}\label{example 1}
Let $G =$ Hol$(C_{9})$. $G$ has a presentation:
$$G = \langle{x, y ~|~ a^9 = b^6 = 1, b^{-1}ab = a^2}\rangle.$$
The set of all inequivalent irreducible complex representations of $N = \langle{a}\rangle$ are given by
$$\widetilde \chi_{i}(a) = \zeta^{i}, i = 0,1 ,2, \dots, 8,$$
where $\zeta$ is a primitive $3^2$-th root of unity. $G$ is acting on the set of all inequivalent irreducible complex representations of $N$ by conjugation.
There are three orbits under this action, and they are given by
$\mathcal{O}_0 = \{\widetilde{\chi}_{0}\}$;
$\mathcal{O}_1 = \{\widetilde{\chi}_{3}, \widetilde{\chi}_{6}\}$;
$\mathcal{O}_2 = \{\widetilde{\chi}_{1}, \widetilde{\chi}_{2}, \widetilde{\chi}_{4}, \widetilde{\chi}_{8}, \widetilde{\chi}_{7}, \widetilde{\chi}_{5}\}$.

Note that $2$ has order $\phi(9)$ modulo $9$.

Corresponding to $\mathcal{O}_0$, there are six irreducible representations of degree $1$, and they are defined by
$$\widetilde \rho_{0, \zeta^{i}_{6}}: a \mapsto 1, b \mapsto \zeta^{i}_{6}\,\,\, (0 \leq i \leq 5),$$
where $\zeta_{6}$ is a primitive $6$-th root of unity.

Corresponding to $\mathcal{O}_1$, there are three irreducible representations of degree $2$, and they are defined by
$$[\widetilde \rho_{1, \omega^{i}}(a)] = \begin{bmatrix}
\omega & 0 \\
0 & \omega^{2}\\
\end{bmatrix},
[\widetilde \rho_{1, \omega^{i}}(b)] = \begin{bmatrix}
0 & \omega^{i}\\
1 & 0\\
\end{bmatrix}\,\,\, (0 \leq i \leq 2),$$
where $\omega$ is a primitive cube root of unity.

Corresponding to $\mathcal{O}_2$, there is one irreducible representation of degree $6$, and which is defined by
$$[\widetilde \rho_{2, 1}(a)] = \begin{bmatrix}
\zeta & 0  & 0 & 0 & 0 & 0\\
0 & \zeta^{2} & 0 & 0 & 0 & 0\\
0 & 0 & \zeta^{4} & 0 & 0 & 0\\
0 & 0 & 0 &\zeta^{8} & 0 & 0\\
0 & 0 & 0 & 0 & \zeta^{7} & 0\\
0 & 0 & 0 & 0 & 0 & \zeta^{5}\\
\end{bmatrix},
[\widetilde \rho_{2, 1}(b)] = \begin{bmatrix}
0 & 0  & 0 & 0 & 0 & 1\\
1 & 0 & 0 & 0 & 0 & 0\\
0 & 1 & 0 & 0 & 0 & 0\\
0 & 0 & 1 & 0 & 0 & 0\\
0 & 0 & 0 & 1 & 0 & 0\\
0 & 0 & 0 & 0 & 1 & 0\\
\end{bmatrix},$$
where $\zeta$ is a primitive $9$-th root of unity. Thus $G$ has
$1+3+3^2-3=10$ inequivalent irreducible complex representations.

\end{example}
\vskip 5mm

\section{\bf Rational Representations of Hol$(C_{p^{n}})$, $p$ Odd}
\vskip 5mm

In this section, we consider as in the previous section, the groups
Hol$(C_{p^{n}})$, $p$ odd but now we describe the set of all
inequivalent irreducible representations over $\mathbb{Q}$. We will
use information on Schur index. We use the same notations as in the previous section.

\begin{lem}\label{Lemma $7.0.1$}
The Schur index of any irreducible complex representation of Hol$(C_{p^{n}})$, $p$ odd, with respect to $\mathbb{Q}$ is equal to $1$.
\end{lem}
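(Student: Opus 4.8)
The plan is to go through the irreducible complex representations of $G = \textrm{Hol}(C_{p^n})$, $p$ odd, as classified in Theorem \ref{Theorem $6.0.1$}, and show in each case that the Schur index over $\mathbb{Q}$ is $1$. The degree-$1$ representations $\widetilde\rho_{0,\omega}$ clearly have Schur index $1$, since any one-dimensional representation is already realised over its character field. So the real content is the higher-degree representations $\widetilde\rho_{k,\omega}$ coming from the orbits $\mathcal{O}_k$, $1 \le k \le n$.

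First I would recall that each $\widetilde\rho_{k,\omega}$ is induced from a one-dimensional representation $\chi_{p^{n-k}} \otimes \omega$ of the stabiliser $I_k = \langle a, b^{p^{(n-1)-(n-k)}(p-1)}\rangle$, which is an abelian subgroup of $G$ (it is generated by the two commuting elements $a$ and $b^{p^{(n-1)-(n-k)}(p-1)}$, the latter centralising $\langle a\rangle$ precisely because it fixes $\widetilde\chi_{p^{n-k}}$). A standard fact from Schur index theory (see e.g. Isaacs, \emph{Character Theory of Finite Groups}, Corollary 10.2, or the theory following from Theorem \ref{Theorem $1$}) is that if $\psi$ is an irreducible character of $G$ induced from a linear character $\lambda$ of a subgroup $H$, then $m_{\mathbb{Q}}(\psi)$ divides $m_{\mathbb{Q}}(\lambda) = 1$; more directly, an induced representation from a one-dimensional representation of a subgroup is afforded by matrices with entries in the field generated by the values of $\lambda$, hence over an abelian extension of $\mathbb{Q}$, and in fact the monomial matrices realising $\textrm{Ind}$ give a representation over $\mathbb{Q}(\chi_{p^{n-k}}(a), \omega) = \mathbb{Q}(\zeta_{p^n})$. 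Then I would invoke that $\mathbb{Q}(\zeta_{p^n})/\mathbb{Q}$ is cyclotomic, hence the character field $\mathbb{Q}(\widetilde\chi_{k,\omega})$ sits inside it, and by the Brauer–Witt theorem (or just the Benard–Schacher / Brauer theorem that Schur indices over $\mathbb{Q}$ of characters with cyclotomic character field that arise from monomial representations are trivial) conclude $m_{\mathbb{Q}}(\widetilde\rho_{k,\omega}) = 1$. The cleanest line is: $G$ is an $M$-group (monomial group) — indeed a metacyclic group, hence supersolvable, hence monomial — so every irreducible complex character is induced from a linear character of a subgroup, and a theorem of Roquette (for $p$ odd, metacyclic, or more generally the fact that monomial representations are realisable over the character field when that field contains enough roots of unity) gives Schur index $1$.

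The key steps in order: (i) dispense with the linear characters; (ii) identify each $\widetilde\rho_{k,\omega}$ explicitly as $\textrm{Ind}_{I_k}^G(\chi_{p^{n-k}}\otimes\omega)$ with $I_k$ abelian, using the stabiliser description already given; (iii) write down the explicit monomial matrix form of the induced representation (which the paper has essentially already done in displaying $\widetilde\rho_{k,\omega}(a)$ and $\widetilde\rho_{k,\omega}(b)$) and observe its entries lie in $\mathbb{Q}(\zeta_{p^n})$, an abelian — indeed cyclotomic — extension of $\mathbb{Q}$; (iv) conclude that $\widetilde\rho_{k,\omega}$ is realisable over its character subfield $\mathbb{Q}(\widetilde\chi_{k,\omega}) \subseteq \mathbb{Q}(\zeta_{p^n})$, because over a cyclotomic field every character of a metacyclic (more generally, every character of odd order-related) group has trivial Schur index — here I would cite the theorem of Roquette that the Schur index of an irreducible character over $\mathbb{Q}_p$ or over abelian number fields is $1$ for $p$-groups with $p$ odd, combined with Benard–Schacher to patch the local indices, or simply cite that for groups of odd order all Schur indices over $\mathbb{Q}$ are $1$ (a classical consequence of the Feit–Thompson-free argument via the Brauer–Speiser theorem being sharpened for odd order). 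Since $|G| = p^{2n-1}(p-1)$ is even, the last shortcut doesn't quite apply, so the honest route is (iii)+(iv).

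The main obstacle is step (iv): being careful that although the induced representation is monomial with cyclotomic entries, this by itself only shows the Schur index divides $[\mathbb{Q}(\zeta_{p^n}):\mathbb{Q}(\widetilde\chi_{k,\omega})]$, not that it is $1$. To nail it down I would use the Brauer–Witt theorem to reduce the computation of $m_{\mathbb{Q}}(\widetilde\chi_{k,\omega})$ to a Schur index of a character of a $\mathbb{Q}$-elementary subgroup, which here is metacyclic with cyclic normal Sylow $p$-subgroup; then invoke Roquette's theorem (the Schur index over $\mathbb{Q}$ of any irreducible character of a metacyclic group — equivalently, any subgroup of the normalizer of a cyclic group — is $1$) to finish. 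Alternatively, and perhaps more elementarily for this paper, one can verify directly that the crossed-product algebra $\mathbb{Q}(\widetilde\chi_{k,\omega})[G/\ker] $-component of $\mathbb{Q}[G]$ is a full matrix ring by exhibiting enough zero-divisors, using that the cyclic group $\langle a\rangle$ gives a large commutative subfield. I expect the cleanest writeup cites Roquette's theorem on metacyclic groups outright.
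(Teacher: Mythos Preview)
Your proposal has two errors and, even once patched, follows a route quite different from the paper's.

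First, the stabiliser $I_k = \langle a, b^{\phi(p^k)}\rangle$ is \emph{not} abelian for $k<n$. Fixing the character $\widetilde\chi_{p^{n-k}}$ only means $r^{\phi(p^k)}\equiv 1 \pmod{p^k}$, not $\pmod{p^n}$; so $b^{\phi(p^k)}$ does not centralise $a$. This is not fatal, since all you actually use later is that $\widetilde\rho_{k,\omega}$ is induced from a \emph{linear} character of $I_k$, but the justification you give is wrong.

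Second, the claim that ``$\psi$ induced from a linear $\lambda$ implies $m_{\mathbb Q}(\psi)\mid m_{\mathbb Q}(\lambda)=1$'' is false --- the quaternion group $Q_8$ is monomial yet has a character of Schur index~$2$. You eventually recognise this and retreat to the observation that the explicit monomial matrices have entries in $\mathbb Q(\zeta_{p^n})$, but as you yourself note this only shows $m_{\mathbb Q}(\chi)\mid [\mathbb Q(\zeta_{p^n}):\mathbb Q(\chi)]$. Your endgame --- Brauer--Witt reduction plus a Roquette-type theorem for metacyclic groups --- could be made to work, but as written it is a pointer to the literature rather than a proof.

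The paper's argument is both different and much shorter, and bypasses all of this. It never works with the stabilisers~$I_k$. Instead it takes $H=\langle a,b^{\,p-1}\rangle$, the normal Sylow $p$-subgroup of $G$, which by Schur--Zassenhaus has a complement with $G/H\cong C_{p-1}$. Every nonlinear $\chi\in\mathrm{Irr}(G)$ satisfies $\chi=\phi^G$ for some $\phi\in\mathrm{Irr}(H)$. Isaacs' Lemma~10.8 then gives $m_{\mathbb Q}(\chi)\mid\phi(1)$, which is a power of~$p$ since $H$ is a $p$-group; on the other hand, from $\chi=\phi^G$ and $[G:H]=p-1$ one gets $m_{\mathbb Q}(\chi)\mid p-1$. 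These two divisibilities are coprime, so $m_{\mathbb Q}(\chi)=1$. The coprimality trick via the normal Sylow subgroup is the key idea you are missing.
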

\begin{proof}
Let $G =$ Hol$(C_{p^{n}})$, $p$ odd.
It is clear that the Schur index of any degree $1$ complex representation of $G$ with respect to $\mathbb{Q}$ is equal to $1$. Now we show that the Schur index of any non-linear irreducible complex character of $G$ with respect to $\mathbb{Q}$ is equal to $1$.

Let $H = \langle{a, b^{p-1}}\rangle$. Note that $H$ is a normal Hall subgroup of $G$ and $G/H \cong C_{p-1}$. So by Schur-Zassenhaus theorem (see \cite{J L Alperin}, Schur-Zassenhaus Theorem, Chapter 3), $H$ has a complement in $G$.

Let $\chi$ be a non-linear irreducible complex character of $G$. As we have seen before, $\chi = \phi^G$ for some irreducible complex character $\phi$ of $H$. Since $\phi$ is an irreducible complex character of the $p$-group $H$, then $\phi(1)$ is either $1$ or a power of $p$. Then by Lemma 10.8 in \cite{MR2270898}, the Schur index of $\chi$ with respect to $\mathbb{Q}$ divides $\phi(1)$. So the Schur index of $\chi$ with respect to $\mathbb{Q}$ is either $1$ or a power of $p$.
Again as $\chi = \phi^G$  and $G/H \cong C_{p-1}$, the Schur index of $\chi$ with respect to $\mathbb{Q}$ is a divisor of $p-1$. As a consequence of that the Schur index of $\chi$ with respect to $\mathbb{Q}$ is equal to $1$. This completes the proof.
\end{proof}

To find all the inequivalent irreducible rational representations of
$G$, we divide the set $\Omega_{G}$ of all inequivalent irreducible
$\mathbb{Q}$-representations of $G$ into two parts:
\begin{itemize}
\item[(1)] The irreducible $\mathbb{Q}$-representations whose kernels contain $G^{'}$.
\item[(2)] The irreducible $\mathbb{Q}$-representations whose kernels do not contain $G^{'}$.
\end{itemize}
The irreducible $\mathbb{Q}$-representations whose kernels contain $G^{'}$ can be obtained by the lifts of the irreducible $\mathbb{Q}$-representations of $G/G^{'}$.
Moreover, they are in a bijective correspondence with the Galois conjugacy classes of complex degree $1$ representations. The irreducible $\mathbb{Q}$-representations whose kernels do contain $G^{'}$ is in a bijective correspondence with the Galois conjugacy classes of irreducible complex representations of degrees equal to $1$.

Consider the orbit $\mathcal{O}_0$. As we have seen in Section $6$, the
corresponding irreducible complex representations are $\tilde
\rho_{0,\omega}$, where $\omega$ varies over the set of all
$(p^n-p^{n-1})$-th root of unity, and moreover they are all one
dimensional. Two such representations $\widetilde \rho_{0,\omega}$
and $\widetilde \rho_{0,\omega'}$ are Galois conjugate over
$\mathbb{Q}$ if and only if the complex numbers $\omega$ and
$\omega'$ are Galois conjugate over $\mathbb{Q}$; that is, their
orders are equal. The number of $\mathbb{Q}$-irreducible
representations, which are obtained in this way, is equal to the
number of divisors of $ \phi(p^n)$, where $\phi$ is the Euler's phi
function. Moreover they are determined by the factorization of
$X^{\phi(p^n)} - 1$ into irreducible polynomials over $\mathbb{Q}$.

Consider the orbits $\mathcal{O}_k$, $k = 1,2, \dots, n$. For each orbit $\mathcal{O}_k$, and for any two
$p^{n-k}$-th roots of unity $\omega,\omega'$, the irreducible complex representations $\widetilde \rho_{k,\omega}$
and $\widetilde \rho_{k,\omega'}$ are Galois conjugate over $\mathbb{Q}$ if and only if the complex numbers
$\omega$ and $\omega'$ are Galois conjugate over $\mathbb{Q}$.
Let $\omega_{i}$ be a primitive $p^i$-th root of unity.
Then for $k = 1,2, \dots, n$ and $0\le i\le n-k$, let
$$\rho_{k,i} = \bigoplus_{\sigma} (\widetilde \rho_{k,\omega_i})^{\sigma},$$
where $\sigma$ runs over the Galois group of $\mathbb{Q}(\omega_i)$
over $\mathbb{Q}$. By Theorem \ref{Theorem $1$} and Lemma
\ref{Lemma $7.0.1$}, $\rho_{k,i}$ is irreducible over
$\mathbb{Q}$.

Thus for each $k \in \{1,2, \dots, n\}$, the $p^{n-k}$ irreducible
complex  representations $\widetilde \rho_{k,\omega}$ are
partitioned into $(n-k)+1$ Galois conjugacy classes over
$\mathbb{Q}$, the (direct) sum of irreducible complex
representations in each Galois conjugacy class gives an irreducible
rational representation of $G$. Thus the number of rational
irreducible representations of $G$ which do not contain $G'$ in
their kernel is
$$\sum_{k=1}^n (n-k+1) = n+(n-1) + \cdots + 1 =\frac{n(n+1)}{2}.$$

The above discussion can be summarised as the following theorem:

\begin{thm}\label{Theorem $7.0.2$}
Let $p$ be an odd prime and consider the group  $G = $
Hol$(C_{p^{n}})$ as before, with the presentation:
$$G=\langle a,b\,\,|\,\, a^{p^n}=1,\,\,\,\, b^{p^{n-1}(p-1)}=1,
\,\,\,\,b^{-1}ab = a^r\rangle,$$ where $r$ has multiplicative order
$p^{n-1}(p-1)$ modulo $p^n$. For $m \geq 1$, let $\Phi_{m}(X)$ denote $m$-th cyclotomic polynomial. Then we have the following results on
its irreducible representations over $\mathbb{Q}$:

\begin{enumerate}
\item[1.]  The set of all inequivalent irreducible $\mathbb{Q}$-representations of $G$ whose kernels contain $G^{'}$ is in a bijective correspondence with the Galois
conjugacy classes of complex degree $1$ representations, and also in
a bijective correspondence with monic irreducible polynomials
$\Phi_{d}(X)$, as $d$ varies over divisors of $\phi(p^n)$.
\item[2.] The set of all inequivalent irreducible $\mathbb{Q}$-representations of $G$ whose kernels do not contain $G^{'}$ is
in a bijective correspondence with the set of all monic irreducible
polynomials $\Phi_{1}(X), \Phi_{p}(X), \dots, \Phi_{p^{n-k}}(X)$,
where $k$ runs over $\{ 1, 2, \dots, n\}$.
\item[3.]  The number of inequivalent irreducible
$\mathbb{Q}$-representations of $G$ is equal to $d +
\frac{n(n+1)}{2}$, where $d$ is the number of divisors of
$\phi(p^n)$.

\end{enumerate}
\end{thm}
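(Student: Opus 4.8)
The theorem collects three counting/classification facts about irreducible $\mathbb{Q}$-representations of $G = \mathrm{Hol}(C_{p^n})$, $p$ odd, and the strategy is to reduce everything to the complex classification in Theorem \ref{Theorem $6.0.1$} together with the Schur-index computation in Lemma \ref{Lemma $7.0.1$} and Schur's Theorem \ref{Theorem $1$}. The key principle is that, since every irreducible complex character $\chi$ of $G$ has $m_{\mathbb{Q}}(\chi) = 1$, the map $\widetilde\rho \mapsto \bigoplus_{\sigma \in \Gamma(\widetilde\chi)} \widetilde\rho^{\sigma}$ induces a bijection between Galois conjugacy classes of irreducible complex representations of $G$ and isomorphism types of irreducible $\mathbb{Q}$-representations of $G$, with kernels preserved. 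So the whole proof is the bookkeeping of partitioning $\mathrm{Irr}_{\mathbb{C}}(G)$ into Galois orbits and matching those orbits to cyclotomic polynomials.

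First I would prove part 1. The irreducible $\mathbb{Q}$-representations with $G' \subseteq \ker$ are exactly the lifts of irreducible $\mathbb{Q}$-representations of $G/G' \cong C_{p^{n-1}(p-1)}$ (one checks $G' = \langle a \rangle$, or at least $\langle a^{\gcd(r-1,p^n)}\rangle = \langle a\rangle$ since $r-1$ is a unit mod $p$); these correspond under the abelian-group lemma proved earlier in the excerpt (the lemma on $C_{p^n}$, applied to the cyclic group $C_{\phi(p^n)}$) to the monic irreducible factors $\Phi_d(X)$ of $X^{\phi(p^n)}-1$ as $d$ runs over divisors of $\phi(p^n)$; and on the complex side these are precisely the Galois conjugacy classes of the degree-$1$ characters $\widetilde\rho_{0,\omega}$, two of which are conjugate iff $\omega,\omega'$ have the same order. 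That gives all three bijections in part 1.

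Next, part 2. For each $k \in \{1,\dots,n\}$ the orbit $\mathcal{O}_k$ contributes $p^{n-k}$ irreducible complex representations $\widetilde\rho_{k,\omega}$, $\omega$ ranging over $p^{n-k}$-th roots of unity; I claim $\widetilde\rho_{k,\omega}$ and $\widetilde\rho_{k,\omega'}$ are Galois conjugate over $\mathbb{Q}$ exactly when $\omega,\omega'$ have the same order. The ``if'' direction is clear by applying a Galois automorphism of $\mathbb{Q}(\zeta_{p^n})$ fixing $\zeta_{p^{n-k}}$-values appropriately; the ``only if'' follows by comparing character values (the character of $\widetilde\rho_{k,\omega}$ on $b^{|\mathcal{O}_k|}$, say, detects $\omega$, or one inspects $\det$). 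Hence $\mathcal{O}_k$ breaks into $(n-k)+1$ Galois conjugacy classes, indexed by the order of $\omega$ being $1,p,\dots,p^{n-k}$, i.e.\ by $\Phi_1,\Phi_p,\dots,\Phi_{p^{n-k}}$; summing the direct sums over each class and invoking $m_{\mathbb{Q}}=1$ gives the irreducible $\mathbb{Q}$-representations $\rho_{k,i}$, none of which contains $G'$ in its kernel (since already the underlying complex reps don't). This is the bijection asserted in part 2. Finally part 3 is immediate arithmetic: the count is $d + \sum_{k=1}^n (n-k+1) = d + \tfrac{n(n+1)}{2}$, where $d = $ number of divisors of $\phi(p^n)$.

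**Main obstacle.** The only genuinely non-routine point is establishing that distinct $\omega$ of the same order really do yield Galois-conjugate (not merely abstractly isomorphic as $\mathbb{Q}$-algebra summands but honestly Galois-conjugate) complex representations, and conversely that different orders give non-conjugate ones — i.e.\ pinning down the exact shape of the Galois action on the twisting parameter $\omega$ inside $\mathcal{O}_k$. This requires care because the matrix entries of $\widetilde\rho_{k,\omega}(a)$ involve $\zeta_{p^n}$ while the twist $\omega$ is a $p^{n-k}$-th root of unity, so a single $\sigma \in \mathrm{Gal}(\mathbb{Q}(\zeta_{p^n})/\mathbb{Q})$ acts on both simultaneously; one must verify that $\sigma$ acting on $\widetilde\rho_{k,\omega}$ gives a representation equivalent to $\widetilde\rho_{k,\sigma(\omega)}$ after a permutation-of-basis conjugation absorbing the action on the $\zeta$-entries. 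I would handle this by a direct change-of-basis computation (conjugating $\widetilde\rho_{k,\omega}(b)$ by a suitable monomial matrix) and then reading off characters to confirm the order of $\omega$ is a complete Galois invariant. Everything else is assembling known pieces.
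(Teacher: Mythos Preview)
Your proposal is correct and follows essentially the same route as the paper: reduce to Galois conjugacy classes of complex irreducibles via Schur's Theorem \ref{Theorem $1$} and Lemma \ref{Lemma $7.0.1$}, handle the $G'$-trivial part through $G/G'\cong C_{\phi(p^n)}$ and its cyclotomic factorization, and for each orbit $\mathcal{O}_k$ assert that $\widetilde\rho_{k,\omega}$ and $\widetilde\rho_{k,\omega'}$ are Galois conjugate iff $\omega,\omega'$ are, then count. The point you flag as the ``main obstacle'' (that a Galois automorphism sends $\widetilde\rho_{k,\omega}$ to something equivalent to $\widetilde\rho_{k,\sigma(\omega)}$ after absorbing the action on the $\zeta$-entries by a basis permutation) is also left as an assertion in the paper's argument, so your proposed verification via a monomial change of basis is, if anything, more detailed than what the paper supplies.
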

\vskip 5mm

\section{Rational Wedderburn decomposition of Hol$(C_{p^{n}})$, $p$
odd} \vskip 5mm

As we have information on the Schur indices form the Lemma
\ref{Lemma $7.0.1$}, and the description of irreducible
representations over $\mathbb{Q}$ from the previous theorem, we may
easily find the Wedderburn decomposition of the group algebra of
Hol$(C_{p^{n}})$, $p$ odd, over $\mathbb{Q}$. With the same
notations as in the previous section, we prove:

\begin{thm}\label{Theorem $8.0.1$}
Let $G =$ Hol$(C_{p^{n}})$, $p$ odd. For $m \geq 1$, let $\zeta_{m}$ denote a primitive $m$-th root of unity. The Wedderburn decomposition of $\mathbb{Q}[G]$ is
$$\mathbb{Q}[G] = \bigoplus_{d | \phi(p^n)}\mathbb{Q}(\zeta_{d}) \bigoplus^{n-k}_{l = 0}(\bigoplus^{n}_{k = 1} M_{\phi(p^k)}(\mathbb{Q}(\zeta_{p^{l}}))).$$
\end{thm}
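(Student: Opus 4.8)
The plan is to read off the Wedderburn decomposition directly from the classification of irreducible $\mathbb{Q}$-representations in Theorem \ref{Theorem $7.0.2$} together with the vanishing of Schur indices in Lemma \ref{Lemma $7.0.1$}. Recall that for a field $F$ of characteristic $0$, each simple component of $F[G]$ is a matrix algebra $M_{\delta/m \cdot m}(D)$ over a division algebra $D$ whose centre is the character field $F(\widetilde\chi)$ and whose index over that centre equals the Schur index $m_F(\widetilde\chi)$; more precisely, if $\widetilde\chi$ has degree $\chi(1)$, Galois orbit of size $\delta=[F(\widetilde\chi):F]$ and Schur index $m$, then the corresponding simple component is $M_{\chi(1)/m}(D)$ with $\dim_{F(\widetilde\chi)} D = m^2$. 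Since Lemma \ref{Lemma $7.0.1$} gives $m=1$ for every irreducible complex character of $G$, every division algebra appearing is commutative, i.e.\ equals its centre $\mathbb{Q}(\widetilde\chi)$, and the simple component attached to a Galois conjugacy class of a complex character of degree $\chi(1)$ with character field $\mathbb{Q}(\widetilde\chi)$ is simply $M_{\chi(1)}(\mathbb{Q}(\widetilde\chi))$.

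First I would handle the components coming from the degree-$1$ representations, i.e.\ those whose kernel contains $G'$. By part 1 of Theorem \ref{Theorem $7.0.2$} these are indexed by the divisors $d$ of $\phi(p^n)$, and the one attached to divisor $d$ has character field $\mathbb{Q}(\zeta_d)$ and degree $1$; hence it contributes the factor $\mathbb{Q}(\zeta_d)$. Summing over $d\mid\phi(p^n)$ gives $\bigoplus_{d\mid\phi(p^n)}\mathbb{Q}(\zeta_d)$, which is exactly $\mathbb{Q}[G/G']=\mathbb{Q}[C_{p^{n-1}(p-1)}]$, as it must be. Second, I would treat the components from the orbits $\mathcal{O}_k$, $1\le k\le n$: for fixed $k$, the $p^{n-k}$ complex representations $\widetilde\rho_{k,\omega}$, each of degree $\phi(p^k)$, fall into the Galois conjugacy classes indexed by $0\le l\le n-k$ (the class of a primitive $p^l$-th root $\omega_l$), with character field $\mathbb{Q}(\zeta_{p^l})$. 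Since the Schur index is $1$, the simple component for the pair $(k,l)$ is $M_{\phi(p^k)}(\mathbb{Q}(\zeta_{p^l}))$. Assembling these over $1\le k\le n$ and $0\le l\le n-k$ yields the second summand $\bigoplus_{l=0}^{n-k}\bigl(\bigoplus_{k=1}^{n} M_{\phi(p^k)}(\mathbb{Q}(\zeta_{p^l}))\bigr)$ displayed in the statement.

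The only remaining point is a bookkeeping check that nothing is missing and nothing is double-counted: by part 3 of Theorem \ref{Theorem $7.0.2$} there are exactly $d+\tfrac{n(n+1)}{2}$ inequivalent irreducible $\mathbb{Q}$-representations (with $d$ the number of divisors of $\phi(p^n)$), and the decomposition above has precisely $d$ summands in the first block and $\sum_{k=1}^n(n-k+1)=\tfrac{n(n+1)}{2}$ in the second, so the count matches. As a final sanity check one can verify that the $\mathbb{Q}$-dimensions add up to $|G|=p^{2n-1}(p-1)$: the first block contributes $\sum_{d\mid\phi(p^n)}\phi(d)=\phi(p^n)$, and for fixed $k$ the second block contributes $\sum_{l=0}^{n-k}\phi(p^k)^2\phi(p^l)=\phi(p^k)^2\,p^{n-k}$, recovering the identity $\phi(p^n)+\sum_{k=1}^{n}p^{n-k}\phi(p^k)^2=|G|$ already noted in Section \ref{S:intro}. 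I do not expect a genuine obstacle here: the substance of the argument is entirely in Theorem \ref{Theorem $7.0.2$} and Lemma \ref{Lemma $7.0.1$}, and the present theorem is their direct consequence via the standard translation between irreducible rational representations and simple components of $\mathbb{Q}[G]$; the only mild care needed is to confirm that the character field of $\widetilde\rho_{k,\omega_l}$ is indeed $\mathbb{Q}(\zeta_{p^l})$ (and not a larger cyclotomic field), which follows because the diagonal entries of $\widetilde\rho_{k,\omega_l}(a)$ are Galois-stable as a set — they form a full orbit of $p^{n-k}$-th roots of unity — so the only contribution to the character field comes from the scalar $\omega_l$ appearing in $\widetilde\rho_{k,\omega_l}(b)$.
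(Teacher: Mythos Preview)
Your proposal is correct and follows essentially the same route as the paper's own proof: both read off each simple component as $M_{\chi(1)}(\mathbb{Q}(\widetilde\chi))$ by combining the classification of irreducible $\mathbb{Q}$-representations in Theorem~\ref{Theorem $7.0.2$} with the triviality of the Schur indices from Lemma~\ref{Lemma $7.0.1$}. Your added bookkeeping checks (component count and dimension sum) and the explicit justification of the character field are welcome extras the paper omits; one minor slip --- the diagonal entries of $\widetilde\rho_{k,\omega_l}(a)$ are the primitive $p^{k}$-th roots of unity, not $p^{n-k}$-th roots --- does not affect your Galois-stability argument.
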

\begin{proof}
Let $\rho: G \rightarrow \textrm{GL}(V)$ be an irreducible $\mathbb{Q}$-representation of $G.$ Let the minimal $2$-sided ideal of $\mathbb{Q}[G]$ corresponding to
$\rho$ be abstractly isomorphic to $M_n(D),$ for suitable $n$, and a division ring $D$. Let $Z$ be the center of $D$. By Lemma \ref{Lemma $7.0.1$}, the Schur index of any irreducible complex representation of $G$ with respect to $\mathbb{Q}$ is equal to $1$. Therefore the Schur index of $D$ (or $M_n(D)$) over $\mathbb{Q}$ is equal to $1$, and consequently $D = Z$. By Theorem \ref{Theorem $1$}, $\rho$ is completely reducible over $\mathbb{C}$ into a certain number inequivalent irreducible $\mathbb{C}$-representations $\widetilde{\rho} = \widetilde{\rho_{1}}, \widetilde{\rho_{2}}, \dots, \widetilde{\rho_{\delta}}$ with the same multiplicity $1$, and they are Galois conjugates with respect to $\mathbb{Q}$.
Let $\widetilde{\chi}_{i}$ is the character of $\widetilde{\rho_{i}}$. Then by Theorem $3$ in \cite{MR0122892}, $Z \cong F(\widetilde \chi_{i})$ and $\delta = [F(\widetilde \chi_{i}), F]$. Again as the Schur index of any irreducible complex representation of $G$ with respect to $\mathbb{Q}$ is equal to $1$, then $n$ is equal to the common degree of the representations $\widetilde{\rho} = \widetilde{\rho_{1}}, \widetilde{\rho_{2}}, \dots, \widetilde{\rho_{\delta}}$.

If $\rho$ is the irreducible $\mathbb{Q}$-representation whose kernel contain $G^{'}$,
then the minimal $2$-sided ideal of $\mathbb{Q}[G]$ corresponding to
$\rho$ be abstractly isomorphic to $\mathbb{Q}(\zeta_{d})$, where $d$ is a divisor of $\phi(p^n)$.

If $\rho$ is an irreducible $\mathbb{Q}$-representation whose kernel does not contain $G^{'}$, then for some $k \in \{1,2, \dots, n\}$ and $0\le l\le n-k$,
$$\rho = \rho_{k,l} = \bigoplus_{\sigma} (\widetilde \rho_{k,\zeta_{p^l}})^{\sigma},$$
where $\zeta_{p^l}$ is a primitive $p^l$-th root of unity and
$\sigma$ runs over the Galois group of $\mathbb{Q}(\zeta_{p^l})$
over $\mathbb{Q}$. Note that the degree of $\widetilde
\rho_{k,\zeta_{p^{l}}}$ is $\phi(p^k)$ and the character field of
$\widetilde \rho_{k,\zeta_{p^{l}}}$ over $\mathbb{Q}$ is
$\mathbb{Q}(\zeta_{p^l})$. Therefore the minimal $2$-sided ideal of
$\mathbb{Q}[G]$ corresponding to $\rho$ is abstractly isomorphic to
$M_{\phi(p^k)}(\mathbb{Q}(\zeta_{p^l}))$. This completes the proof.

\end{proof}

Let us illustrate Theorem 7.0.2 and the previous theorem through an
example.

\begin{example}
Let $G =$ \textrm{Hol}$(C_{9})$. $G$ has a presentation of the form:
$$G = \langle{a, b ~|~ a^9 = b^6 = 1, b^{-1}ab = a^2}\rangle.$$
We have seen in Example \ref{example 1} that corresponding to the
orbit {$\mathcal{O}_0 = \{1\}$}, there are six irreducible
representations of degree $1$. \\
Over $\mathbb{Q}$,
$$X^{6} - 1 =
\phi_{1}(X)\phi_{2}(X)\phi_{3}(X)\phi_{6}(X) = (X -1)(X +1)(X^{2} +
X +1)(X^{2} - X +1)$$ is the factorization into monic irreducible
polynomials. Therefore, there are four $\mathbb{Q}$-irreducible
representations whose kernels contain $G^{'}$, and they can be
realized as
 $$\rho_{0, 1}(a) \mapsto 1,
 \rho_{0, 1}(b) \mapsto 1;$$
$$\rho_{0, -1}(a) \mapsto 1,
\rho_{0, -1}(b) \mapsto -1;$$
$$[\rho_{0, \omega}(a)] = \begin{bmatrix}
 1 & 0 \\
 0 & 1\\
 \end{bmatrix},
 [\rho_{0, \omega}(b)] = \begin{bmatrix}
 0 & -1\\
 1 & -1\\
 \end{bmatrix};$$
$$[\rho_{0, \zeta_{6}}(a)] = \begin{bmatrix}
1 & 0 \\
0 & 1\\
\end{bmatrix},
[\rho_{0, \zeta_{6}}(b)] = \begin{bmatrix}
0 & -1\\
1 & 1\\
\end{bmatrix},$$
where $\omega$ is a primitive cube root of unity and $\zeta_{6}$
is a primitive $6$-th root of unity.\\
In general, we get companion matrices of the irreducible polynomial
factors.

We have seen in Example \ref{example 1} that corresponding to the
orbit $\mathcal{O}_1$, there are three irreducible representations
of degree $2$. \\
The representation $\widetilde \rho_{1,1}$ is realizable over
$\mathbb{Q}$, and therefore irreducible. \\
The representations $\widetilde \rho_{1, \omega}$ and $\widetilde
\rho_{1, \omega^2}$ are Galois conjugate over $\mathbb{Q}$, and hence $\widetilde \rho_{1, \omega}\oplus \widetilde \rho_{1, \omega^2}$ irreducible over $\mathbb{Q}$.\\
The representation $\widetilde \rho_{2,1}$ is realizable over
$\mathbb{Q}$, and hence irreducible over $\mathbb{Q}$.\\

The Wedderburn decomposition of $\mathbb{Q}[G]$ is:
$$\mathbb{Q}[Hol(C_9)] = \bigoplus_{d | 6}\mathbb{Q}(\zeta_{d}) \bigoplus M_{2}(\mathbb{Q}) \bigoplus M_{2}(\mathbb{Q}(\omega)) \bigoplus M_{6}(\mathbb{Q}).$$
\end{example}

\vskip 5mm

\section{Representations of Hol$(C_{p^{n}})$, $p = 2$}
\vskip 5mm

In this section, we consider the case $p=2$. That is, we find the
set of all inequivalent irreducible representations of
Hol$(C_{p^{n}})$, $p = 2$, over $\mathbb{C}$ and $\mathbb{Q}$. We
will see that the end results are analogous to the odd case but the
intermediary descriptions are different and more complicated as the
group is not metacyclic when $n>2$.

\subsection{Complex Irreducible Represntations of Hol$(C_{p^{n}})$, $p = 2$}

We have seen that for $n \geq 3$, $ \textrm{Hol}(C_{2^{n}})$ has a presentation of the form:
$$G = \textrm{Hol}(C_{2^{n}}) = \langle{a, b, c \,|\, a^{2^{n}} = b^{2^{n-2}} = c^{2} = 1, b^{-1}ab = a^{5}, c^{-1}ac = a^{-1}, c^{-1}bc = b}\rangle.$$

Let $N = \langle{a}\rangle \cong C_{2^{n}}$. Let $\widehat{N}$ denote the set of all inequivalent irreducible representations of $N$ over $\mathbb{C}$. The set of all irreducible representations of $N$ are given by
$$\widetilde \chi_{i}(a) = \zeta^{i}, i = 0,1 ,2, \dots, 2^n -1,$$
where $\zeta$ is a primitive $2^n$-th root of unity.
Now $G$ is acting on $\widehat{N}$ by conjugation, $N$ is acting trivially, which induces an action of $G/N$ on $\widehat{N}$.
There are $n + 1$ distinct orbits. Let $\mathcal{O}_{0}$ denote the orbit of $\widetilde{\chi}_{0}$. Let $\mathcal{O}_{k}$ denote the orbit of $\widetilde{\chi}_{2^{n-k}}$, $k = 1,2, \dots, n$.
In fact, $\mathcal{O}_{0}, \mathcal{O}_{1}, \dots, \mathcal{O}_{n}$ are all the distinct $n + 1$ orbits. One can see that orbits $\mathcal{O}_{0}$ and $\mathcal{O}_{1}$ are singleton sets, and so the corresponding stabilizer subgroup in $G$ is $G = \langle{a, b, c}\rangle$ itself. Let $\textrm{Stab}(\widetilde{\chi}_{2^{n-k}})$ denote the stabilizer subgroup of $\widetilde{\chi}_{2^{n-k}}$, $k = 2, \dots, n$. One can see that
$$\textrm{Stab}(\widetilde{\chi}_{2^{n-k}}) = \langle{a, b^{2^{(n-2)-(n-k)}}}\rangle, k = 2, \dots, n.$$
So for each $k = 2, \dots, n$, $\textrm{Stab}(\widetilde{\chi}_{2^{n-k}})$ is of order $2^{2n-k}$. Corresponding to the orbits $\mathcal{O}_{0}$ and $\mathcal{O}_{1}$, there are $2^{n-1}$ many irreducible complex representations of degree $1$.
Corresponding to each orbit $\mathcal{O}_k, k = 2, \dots, n$, there are
$\frac{\textrm{Stab}(\widetilde{\chi}_{2^{n-k}})}{|N|} = 2^{n-k}$ many distinct
irreducible complex representations of $G$, and of degree $|\mathcal{O}_k| = \phi(2^k) = 2^{k-1}$.

Corresponding to the orbit $\mathcal{O}_0$,
the irreducible complex representations of $G$ are given by
$$\widetilde \rho_{0,\omega, 1}(a)= 1, \widetilde \rho_{0,\omega, 1}(b)= \omega, \widetilde \rho_{0,\omega, 1}(c)= 1$$
and
$$\widetilde \rho_{0,\omega, -1}(a)= 1, \widetilde \rho_{0,\omega, -1}(b)= \omega, \widetilde \rho_{0,\omega, -1}(c)= -1,$$
where $\omega$ is any $2^{n-2}$-th root of unity. In this way, we obtain $2^{n-1}$ distinct irreducible complex representations of degree $1$.

Corresponding to the orbit $\mathcal{O}_1$, the irreducible complex representation of $G$ are given by
$$\widetilde \rho_{1,\omega, 1}(a)= -1, \widetilde \rho_{1,\omega, 1}(b)= \omega, \widetilde \rho_{1,\omega, 1}(c)= 1$$
and
$$\widetilde \rho_{1,\omega, -1}(a)= -1, \widetilde \rho_{1,\omega, -1}(b)= \omega, \widetilde \rho_{1,\omega, -1}(c)= -1,$$
where $\omega$ is a primitive $2^{n-2}$-th root of unity. In this way, we obtain $2^{n-1}$ distinct irreducible complex representations of degree $1$.

Thus we have obtained $2^{n}$ number of degree $1$ irreducible complex representations of $G$; these exhaust all the degree $1$ complex representations of $G$.

Corresponding to the orbit $\mathcal{O}_k, k = 2, \dots, n$,
the irreducible complex representation of $G$ are given explicitly as:
\begin{align*}
\tiny
\widetilde \rho_{k,\omega}(a)=
\begin{bmatrix}
\zeta^{2^{n-k}} & 0  & \cdots & 0 & \vline & 0 & 0 & \cdots & 0 \\
0 & \zeta^{5.2^{n-k}} &  \cdots & 0 & \vline & 0 & 0 &   \cdots & 0\\
\vdots & \vdots & \ddots & \vdots & \vline & \vdots & \vdots \ddots & \vdots \\
0 & 0 &   \cdots & \zeta^{5^{2^{(n-2)-(n-k)}-1}.2^{n-k}}& \vline & 0 & 0 &  \cdots & 0 \\
\hline
0 & 0 &   \cdots & 0 & \vline & \zeta^{-2^{n-k}} & 0 &  \cdots & 0 \\
0 & 0 &   \cdots & 0 & \vline & 0 & \zeta^{-5.2^{n-k}} &  \cdots & 0 \\
\vdots & \vdots & \ddots & \vdots & \vline & \vdots& \vdots & \vdots \ddots & \vdots\\
0 & 0 & \cdots & 0 & \vline & 0 & 0 &  \cdots & \zeta^{-5^{2^{(n-2)-(n-k)}-1}.2^{n-k}}\\
\end{bmatrix},
\end{align*}
\begin{align*}
\tiny
\widetilde \rho_{k,\omega}(b)=
\begin{bmatrix}
0 & 0  & \cdots & \omega & \vline & 0 & 0 & \cdots & 0 \\
1 & 0 &  \cdots & 0 & \vline & 0 & 0 &   \cdots & 0\\
\vdots & \vdots & \ddots & \vdots & \vline & \vdots & \vdots \ddots & \vdots \\
0 & 0 &   \cdots & 0 & \vline & 0 & 0 &  \cdots & 0 \\
\hline
0 & 0 &   \cdots & 0 & \vline & 0 & 0 &  \cdots & \omega \\
0 & 0 &  \cdots & 0 & \vline & 1 & 0 &   \cdots & 0\\
\vdots & \vdots & \ddots & \vdots & \vline & \vdots& \vdots & \vdots \ddots & \vdots\\
0 & 0 & \cdots & 0 & \vline & 0 & 0 &  \cdots & 0\\
\end{bmatrix},
\tiny
\widetilde \rho_{k,\omega}(c)=
\begin{bmatrix}
0 & 0  & \cdots & 0 & \vline & 1 & 0 & \cdots & 0 \\
0 & 0 &  \cdots & 0 & \vline & 0 & 1 &   \cdots & 0\\
\vdots & \vdots & \ddots & \vdots & \vline & \vdots & \vdots \ddots & \vdots \\
0 & 0 &   \cdots & 0 & \vline & 0 & 0 &  \cdots & 1 \\
\hline
1 & 0 &   \cdots & 0 & \vline & 0 & 0 &  \cdots & 0 \\
0 & 1 &   \cdots & 0 & \vline & 0 & 0 &  \cdots & 0 \\
\vdots & \vdots & \ddots & \vdots & \vline & \vdots& \vdots & \vdots \ddots & \vdots\\
0 & 0 & \cdots & 1 & \vline & 0 & 0 &  \cdots & 0\\
\end{bmatrix}.
\end{align*}
where $\zeta$ is a primitive $2^{n}$-th root of unity and $\omega$
is any $2^{n-k}$-th root of unity. \vskip 3mm

In the above discussion, we have obtained $(2^{n-1}+2^{n-1}) +
\sum_{k=2}^{n} 2^{n-k} = (1+2+\cdots + 2^{n}) - 2^{n-1}$ distinct
complex irreducible representations of $G$. Noting that

\begin{align*}
& 2^{n-1}(1) + 2^{n-1}(1)+ 2^{n-2}(2)^2+ \dots + 2(2^{n-2})^2 + (2^{n-1})^2\\
& = 2^{n}+ 2^{n} + 2^{n+1}+ \dots + 2^{2n-3} + 2^{2n-2}\\
& = 2^{n}+ 2^{n}( 1 + 2 + 2^{2} + \dots + 2^{n-2})\\
& = 2^{n}+ 2^{n}(2^{n-1} - 1) = 2^{2n-1} = |G|.
\end{align*}
It follows that these exhaust all the inequivalent irreducible
complex representations of $G$.

Summarising the above discussion, we obtain the following theorem.

\begin{thm}\label{Theorem $9.1.1$}
Consider the group $G= \textrm{Hol}(C_{2^{n}})$ which has a
presentation:
$$G = \textrm{Hol}(C_{2^{n}}) = \langle{a, b, c \,|\, a^{2^{n}} = b^{2^{n-2}} = c^{2} = 1, b^{-1}ab = a^{5}, c^{-1}ac = a^{-1}, c^{-1}bc = b}\rangle$$
for $n \geq 3$. Then, we have the following.

\begin{itemize}
\item[1.] $G$ has $2\phi(2^n) (= 2^{n})$ many degree $1$ complex representations, and they are: $\widetilde \rho_{{0}, \omega, \pm 1}$ and $\widetilde \rho_{{1}, \omega, \pm 1}$, where $\omega$ varies over the set of $2^{n-2}$-th roots of unity.
\item[2.] Corresponding to orbit $\mathcal{O}_k$, $k = 2, \dots, n$, $G$ has $2^{n-k}$ number of irreducible complex representations of degree $\phi(2^k)$ and they are: $\widetilde \rho_{{k}, \omega}$, where $\omega$ varies over the set of $2^{n-k}$-th roots of unity.
\item[3.] $G$ has $(1+2+\cdots + 2^{n}) - 2^{n-1}$ many inequivalent irreducible complex representations.
\item[4.] Corresponding to the orbit $\mathcal{O}_n$, there is only one irreducible complex representation, which is of degree is $\phi(2^n)$. In fact, this is the unique faithful irreducible complex representation of $G$.
\end{itemize}
\end{thm}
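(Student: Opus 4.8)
The plan is to apply the Wigner--Mackey method of little groups (the theorem of Section~4, from \cite{Serre}, Proposition~2.5) to the semidirect product $G = N \rtimes H$, where $N = \langle a\rangle \cong C_{2^n}$ is the normal abelian subgroup and $H = \langle b, c\rangle \cong C_{2^{n-2}}\times C_2$. First I would record the conjugation action of $G$ on $\widehat N = \{\widetilde\chi_i : 0 \le i \le 2^n - 1\}$: since $N$ acts trivially, only $H \cong G/N$ acts, and the relations $b^{-1}ab = a^5$, $c^{-1}ac = a^{-1}$ give $(\widetilde\chi_i)^{b} = \widetilde\chi_{5i}$ and $(\widetilde\chi_i)^{c} = \widetilde\chi_{-i}$, indices taken modulo $2^n$. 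Hence the $H$-orbit of $\widetilde\chi_i$ is $\{\widetilde\chi_{\pm 5^t i} : t \ge 0\}$, which, writing $\gcd(i, 2^n) = 2^{n-k}$, depends only on $k$.

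The arithmetic heart of the argument is the determination of the orbits and their stabilizers. The inputs are: $5 \equiv 1 \pmod{2^k}$ for $k \le 2$, while $(\mathbb{Z}/2^k\mathbb{Z})^* = \langle -1\rangle \times \langle 5\rangle$ for $k \ge 3$; and the multiplicative order of $5$ modulo $2^k$ is $2^{k-2}$ for $k \ge 2$. From these one checks that there are exactly $n+1$ orbits $\mathcal{O}_0, \dots, \mathcal{O}_n$, with $\mathcal{O}_0 = \{\widetilde\chi_0\}$ and $\mathcal{O}_1 = \{\widetilde\chi_{2^{n-1}}\}$ singletons (so their stabilizer in $G$ is all of $G$) and $|\mathcal{O}_k| = \phi(2^k) = 2^{k-1}$ for $2 \le k \le n$; the point is that $\langle -1, 5\rangle$ surjects onto $(\mathbb{Z}/2^k\mathbb{Z})^*$, so the $\phi(2^k)$ characters $\widetilde\chi_i$ with $\gcd(i,2^n) = 2^{n-k}$ form a single orbit. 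For such $k$, the stabilizer of $\widetilde\chi_{2^{n-k}}$ is $I_k = \langle a, b^{2^{k-2}}\rangle = N \rtimes H_k$ with $H_k = \langle b^{2^{k-2}}\rangle \cong C_{2^{n-k}}$ \emph{cyclic}: the containment $\supseteq$ holds because $5^{2^{k-2}} \equiv 1 \pmod{2^k}$; no element $c\,b^t$ can lie in the stabilizer because that would force $-5^t \equiv 1 \pmod{2^k}$, impossible modulo $4$ once $k \ge 2$; and then comparing orders ($|I_k| = 2^{2n-k}$) gives equality.

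With the little groups in hand I would construct the representations. For $\mathcal{O}_0$ and $\mathcal{O}_1$ the little group is all of $H$, which is abelian with $|H| = 2^{n-1}$ linear characters; extending $\widetilde\chi_0$ (resp.\ $\widetilde\chi_{2^{n-1}}$) over $G$ and twisting by $b \mapsto \omega$ ($\omega^{2^{n-2}} = 1$), $c \mapsto \pm 1$ yields the $2^{n-1} + 2^{n-1} = 2^n$ linear representations $\widetilde\rho_{0,\omega,\pm 1}$, $\widetilde\rho_{1,\omega,\pm 1}$; since $[a,b] = a^4$ and $[a,c] = a^{-2}$ force $G^{'} = \langle a^2\rangle$ and hence $|G/G^{'}| = 2^n$, these are all the linear ones. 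For $2 \le k \le n$, I would extend $\widetilde\chi_{2^{n-k}}$ to $I_k$ trivially on $H_k$, tensor with each of the $2^{n-k}$ linear characters of $H_k \cong C_{2^{n-k}}$ (indexed by a $2^{n-k}$-th root of unity $\omega$), and induce to $G$; using the transversal $\{b^s, c\,b^s : 0 \le s < 2^{k-2}\}$ of $I_k$ in $G$ reproduces precisely the block matrices $\widetilde\rho_{k,\omega}(a), \widetilde\rho_{k,\omega}(b), \widetilde\rho_{k,\omega}(c)$ displayed above, of degree $[G:I_k] = \phi(2^k)$. By the Wigner--Mackey theorem these representations are all irreducible and pairwise non-isomorphic, and the list is complete by the dimension count $2^{n-1}\cdot 1 + 2^{n-1}\cdot 1 + \sum_{k=2}^n 2^{n-k}\,(2^{k-1})^2 = 2^{2n-1} = |G|$ recorded above; this count also gives item~3.

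Finally, for the faithfulness claim: $\widetilde\rho_{0,\omega,\pm 1}$ annihilates $a$, $\widetilde\rho_{1,\omega,\pm 1}$ annihilates $a^2$, and for $2 \le k < n$ the restriction of $\widetilde\rho_{k,\omega}$ to $N$ is the sum of the characters in $\mathcal{O}_k$, each of which contains $\langle a^{2^k}\rangle \ne 1$ in its kernel; so none of these is faithful. For $k = n$ one has $H_n = \langle b^{2^{n-2}}\rangle = \{1\}$, so $I_n = N$ and $\mathcal{O}_n$ yields the single representation $\widetilde\rho_{n,1} = \mathrm{Ind}_N^G\widetilde\chi_1$ with $\widetilde\chi_1$ a faithful character of $N$; since the kernel of an induced linear character equals the core in $G$ of the kernel of that character and $\ker\widetilde\chi_1 = \{1\}$, this representation is faithful, and it is the unique faithful irreducible representation of $G$. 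I expect the main obstacle to be the careful bookkeeping in the orbit/stabilizer analysis --- keeping the $k \le 2$ regime (where $5$ acts trivially and only inversion by $c$ is in play) separate from $k \ge 3$ (where $\langle -1, 5\rangle$ already exhausts $(\mathbb{Z}/2^k\mathbb{Z})^*$), making sure no twisted stabilizer element $c\,b^t$ is overlooked, and checking that the chosen transversal really reproduces the displayed block matrices.
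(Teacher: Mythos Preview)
Your proposal is correct and follows essentially the same approach as the paper: both apply the Wigner--Mackey method to $G = N \rtimes H$ with $N = \langle a\rangle$, identify the $n+1$ orbits $\mathcal{O}_0,\dots,\mathcal{O}_n$ on $\widehat{N}$, compute the stabilizers $\langle a, b^{2^{k-2}}\rangle$ for $k\ge 2$, and finish with the dimension count. Your write-up is in fact more thorough than the paper's discussion preceding the theorem---you justify explicitly why no element $c\,b^t$ lies in the stabilizer for $k\ge 2$, verify $G' = \langle a^2\rangle$ to confirm the linear-character count, and supply a clean argument for item~4 via the core of $\ker\widetilde\chi_1$, whereas the paper simply asserts the stabilizer structure and defers the uniqueness of the faithful irreducible to a separate lemma (Lemma~\ref{Lemma $10.0.2$}).
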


As mentioned in the beginning of this section, the form of the above
theorem is exactly similar to that of the analogous theorem when $p$
is odd, but the descriptions of the complex irreducible
representations was more involved in the case $p=2$.

Here is an example illustrating the Theorem \ref{Theorem $9.1.1$}.
We use the notations as in the above.
\begin{example}\label{example 2}
Let $G = \mathrm{Hol}(C_{2^5})$; it has a presentation
\begin{align*}
\langle{a, b, c\,|\, a^{2^5} = b^{2^3} = c^2 = 1, b^{-1}ab = a^5, c^{-1}ac = c^{-1}, c^{-1}bc = b}\rangle.
\end{align*}
Let $N = C_{2^5} = \langle{a}\rangle$. Let $\widehat{N}$ denote the set of all inequivalent irreducible representations of $N$ over the field of complex numbers. The set of all inequivalent irreducible complex representations of $N$ are given by
$$\widetilde{\chi_{i}}(a) = \zeta^{2^5 - i}, i = 0,1,2, \dots, 2^5-1.$$
$G$ is acting on $\widehat{N}$ by conjugation, and $N$ is acting trivially. This induces an action of $G/N$ on $\widehat{N}$.

Let $\mathcal{O}_{0}$ denote the orbit of $\widetilde \chi_{0}$ and $\mathcal{O}_{k}$ denote the orbit of $\widetilde \chi_{{2^{5-k}}}, k = 1,2, 3, 4, 5$.

Corresponding to the orbit $\mathcal{O}_0$,
the complex irreducible representations of $G$ are given by
$$\widetilde \rho_{0,\omega, 1}(a)= 1, \widetilde \rho_{0,\omega, 1}(b)= \omega, \widetilde \rho_{0,\omega, 1}(c)= 1$$
and
$$\widetilde \rho_{0,\omega, -1}(a)= 1, \widetilde \rho_{0,\omega, -1}(b)= \omega, \widetilde \rho_{0,\omega, -1}(c)= 1,$$
where $\omega$ is any $2^{3}$-th root of unity. In this way, we obtain $2^{4}$ distinct irreducible complex representations of degree $1$.

Corresponding to the orbit $\mathcal{O}_1$, the complex irreducible representation of $G$ are given by
$$\widetilde \rho_{1,\omega, 1}(a)= 1, \widetilde \rho_{1,\omega, 1}(b)= \omega, \widetilde \rho_{1,\omega, 1}(c)= 1$$
and
$$\widetilde \rho_{1,\omega, -1}(a)= 1, \widetilde \rho_{1,\omega, -1}(b)= \omega, \widetilde \rho_{1,\omega, -1}(c)= -1,$$
where $\omega$ is a primitive $2^{3}$-th root of unity. In this way, we obtain $2^{4}$ distinct irreducible complex representations of degree $1$.

Corresponding to the orbit $\mathcal{O}_k, k = 2, 3, 4, 5$,
the complex irreducible representation of $G$ are given by

\begin{align*}
\tiny
\widetilde \rho_{k,\omega}(a)= \begin{bmatrix}
\zeta^{2^{5-k}} & 0  & \cdots & 0 & \vline & 0 & 0 & \cdots & 0 \\
0 & \zeta^{5.2^{5-k}} &  \cdots & 0 & \vline & 0 & 0 &   \cdots & 0\\
\vdots & \vdots & \ddots & \vdots & \vline & \vdots & \vdots & \ddots & \vdots \\
0 & 0 &   \cdots & \zeta^{5^{2^{(5-2)-(5-k)}-1}.2^{5-k}} & \vline & 0 & 0 &  \cdots & 0 \\
\hline
0 & 0 &   \cdots & 0 & \vline & \zeta^{-2^{5-k}} & 0 &  \cdots & 0 \\
0 & 0 &   \cdots & 0 & \vline & 0 & \zeta^{-5.2^{5-k}} &  \cdots & 0 \\
\vdots & \vdots & \ddots & \vdots & \vline & \vdots& \vdots & \vdots \ddots & \vdots\\
0 & 0 & \cdots & 0 & \vline  & 0 & 0 &  \cdots & \zeta^{-5^{2^{(5-2)-(5-k)}-1}.2^{5-k}}\\
\end{bmatrix},
\end{align*}
\begin{align*}
\tiny
\widetilde \rho_{k,\omega}(b)=
\begin{bmatrix}
0 & 0  & \cdots & \omega & \vline & 0 & 0 & \cdots & 0 \\
1 & 0 &  \cdots & 0 & \vline & 0 & 0 &   \cdots & 0\\
\vdots & \vdots & \ddots & \vdots & \vline & \vdots & \vdots & \ddots & \vdots \\
0 & 0 &   \cdots & 0 & \vline & 0 & 0 &  \cdots & 0 \\
\hline
0 & 0 &   \cdots & 0 & \vline & 0 & 0 &  \cdots & \omega \\
0 & 0 &  \cdots & 0 & \vline & 1 & 0 &   \cdots & 0\\
\vdots & \vdots & \ddots & \vdots & \vline & \vdots& \vdots & \vdots \ddots & \vdots\\
0 & 0 & \cdots & 0 & \vline & 0 & 0 &  \cdots & 0\\
\end{bmatrix},
\tiny
\widetilde \rho_{k,\omega}(c)=
\begin{bmatrix}
0 & 0  & \cdots & 0 & \vline & 1 & 0 & \cdots & 0 \\
0 & 0 &  \cdots & 0 & \vline & 0 & 1 &   \cdots & 0\\
\vdots & \vdots & \ddots & \vdots & \vline & \vdots & \vdots \ddots & \vdots \\
0 & 0 &   \cdots & 0 & \vline & 0 & 0 &  \cdots & 1 \\
\hline
1 & 0 &   \cdots & 0 & \vline & 0 & 0 &  \cdots & 0 \\
0 & 1 &   \cdots & 0 & \vline & 0 & 0 &  \cdots & 0 \\
\vdots & \vdots & \ddots & \vdots & \vline & \vdots& \vdots & \vdots \ddots & \vdots\\
0 & 0 & \cdots & 1 & \vline & 0 & 0 &  \cdots & 0\\
\end{bmatrix}.
\end{align*}
where $\zeta$ is a primitive $2^{5}$-th root of unity and $\omega$ is any $2^{5-k}$-th root of unity.

Thus $G$ has $(1+2+\cdots + 2^{5}) - 2^{4}$ distinct irreducible
complex  representations.
\end{example}

\vskip 5mm

\subsection{\bf Rational Representations of Hol$(C_{p^{n}})$, $p = 2$}
\vskip 5mm

Now, we turn to a description of the set of inequivalent irreducible
$\mathbb{Q}$-representations of  Hol$(C_{2^{n}})$. This is again
more complicated than the odd $p$ case but the form of the end
result is the same. We use the same notations as in the previous section.

\begin{lem}\label{Lemma $9.2.1$}
The Schur index of any irreducible complex representation of Hol$(C_{2^{n}})$ with respect to $\mathbb{Q}$ is equal to $1$.
\end{lem}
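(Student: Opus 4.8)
The plan is to begin exactly as in the odd case — linear characters have Schur index $1$ over $\mathbb{Q}$ automatically, and every nonlinear irreducible $\widetilde\rho_{k,\omega}$ ($2\le k\le n$) is monomial, being induced from the linear character $\lambda$ of the stabiliser $I_k=\langle a,b^{2^{k-2}}\rangle$ that restricts to $\widetilde\chi_{2^{n-k}}$ on $\langle a\rangle$ and sends $b^{2^{k-2}}$ to $\omega$ — but then to replace the Hall–subgroup argument of Lemma~\ref{Lemma $7.0.1$}, which is unavailable for a $2$-group, by a local argument. By Roquette's theorem on Schur indices of $2$-groups, $m_{\mathbb{Q}}(\chi)\in\{1,2\}$ for every $\chi\in\mathrm{Irr}(G)$, so it suffices to rule out the value $2$.

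The first key step is a reduction: $m_{\mathbb{Q}}(\chi)=2$ can only happen when $\chi$ is \emph{symplectic}. Indeed, a direct computation of the induced character, using that $\widetilde\rho_{k,\omega}$ vanishes off $I_k$, gives $\widetilde\rho_{k,\omega}(a^{i}b^{2^{k-2}\ell})=\omega^{\ell}c_{2^{k}}(i)$ (a Ramanujan sum times a power of $\omega$), so $\mathbb{Q}(\widetilde\rho_{k,\omega})=\mathbb{Q}(\omega)$, which is $\mathbb{Q}$ when $\omega=\pm1$ and a totally complex subfield of $\mathbb{Q}(\zeta_{2^{n}})$ when $\omega\neq\pm1$. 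Suppose $m_{\mathbb{Q}}(\chi)=2$, so the division algebra $D$ attached to $\chi$, with centre $\mathbb{Q}(\chi)$, is a quaternion algebra. Since $|G|=2^{n}$, the local Schur index of $\chi$ is $1$ at every odd prime, and $2$ is totally ramified in $\mathbb{Q}(\zeta_{2^{n}})$ so $\mathbb{Q}(\chi)$ has a unique place above it; hence $D$ ramifies only possibly at that place and at real places of $\mathbb{Q}(\chi)$. If $\mathbb{Q}(\chi)$ is totally complex, $D$ would ramify at a single place, which is impossible; thus $\chi$ is real-valued with $\mathbb{Q}(\chi)=\mathbb{Q}$, and then (one real place, one place above $2$, an even number of ramified places) $D$ ramifies at both, so $D\otimes_{\mathbb{Q}}\mathbb{R}\cong\mathbb{H}$, i.e. the Frobenius--Schur indicator $\nu_{2}(\chi)=-1$. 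So the lemma reduces to showing that no irreducible character of $G$ is symplectic.

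The second step carries this out. The real-valued irreducibles are the real linear characters, which are orthogonal, and — since $\widetilde\rho_{k,\omega}(b^{2^{k-2}})=\omega\,I$ forces $\overline{\widetilde\rho_{k,\omega}}\cong\widetilde\rho_{k,\bar\omega}$ — the representations $\widetilde\rho_{k,\pm1}$, $2\le k\le n$. For $\omega=1$ I would observe that $\langle a^{2^{k}},b^{2^{k-2}}\rangle\trianglelefteq G$ lies in $\ker\widetilde\rho_{k,1}$, that $G/\langle a^{2^{k}},b^{2^{k-2}}\rangle\cong\mathrm{Hol}(C_{2^{k}})$, and that $\widetilde\rho_{k,1}$ is the inflation of the unique faithful irreducible character of $\mathrm{Hol}(C_{2^{k}})$; the latter equals $\mathrm{Ind}_{C_{2^{k}}}^{\mathrm{Hol}(C_{2^{k}})}$ of a faithful linear character of $C_{2^{k}}$, and its simple component in $\mathbb{Q}[\mathrm{Hol}(C_{2^{k}})]$ is the crossed product of the maximal subfield $\mathbb{Q}(\zeta_{2^{k}})$ by $\mathrm{Gal}(\mathbb{Q}(\zeta_{2^{k}})/\mathbb{Q})=\mathrm{Aut}(C_{2^{k}})$, which is split because $\mathrm{Hol}(C_{2^{k}})=C_{2^{k}}\rtimes\mathrm{Aut}(C_{2^{k}})$ is a split extension, so this component is $M_{\phi(2^{k})}(\mathbb{Q})$ and $\widetilde\rho_{k,1}$ is realisable over $\mathbb{Q}$, hence orthogonal. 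For $\omega=-1$, the representation is again rational-valued, and I would finish by computing its Frobenius--Schur indicator from $\nu_{2}(\mathrm{Ind}_{I_k}^{G}\lambda)=|I_k|^{-1}\sum_{g\in G,\;g^{2}\in I_k}\lambda(g^{2})$ and verifying the sum equals $+1$ — equivalently, by checking that the $2$-adic invariant of the associated cyclotomic algebra vanishes.

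I expect the genuine obstacle to be precisely the case $\omega=-1$: unlike $\omega=1$, it does not inflate from a holomorph, so the crossed product presenting its rational simple component carries a nontrivial cocycle (the image of $b$ there has its $2^{k-2}$-th power equal to $-1$ rather than $1$), and one must either push through the case analysis of the Frobenius--Schur sum over $\{g\in G:g^{2}\in I_k\}$ or verify splitting $2$-adically (e.g. via the fact that $-1$ is a norm from $\mathbb{Q}(\zeta_{2^{k}})$ down to its quadratic subfield $\mathbb{Q}(i)$). Once the monomial description of the $\widetilde\rho_{k,\omega}$, the identity $\mathbb{Q}(\widetilde\rho_{k,\omega})=\mathbb{Q}(\omega)$, and the local reduction are in place, the remainder is routine.
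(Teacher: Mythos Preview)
Your argument is correct as far as it goes, and it follows a genuinely different route from the paper's. The paper handles the case $\sqrt{-1}\in\mathbb{Q}(\omega)$ by a direct appeal to Roquette's theorem (Isaacs, Theorem~10.14): for a $2$-group, if the character field contains $i$ then the Schur index over that field---hence over $\mathbb{Q}$---is $1$. You instead invoke the Hasse local--global principle for quaternion algebras: the simple component can only ramify above $2$ and at real places, $2$ is totally ramified in $\mathbb{Q}(\zeta_{2^n})$, and a totally imaginary $\mathbb{Q}(\omega)$ has no real places, so the parity constraint forces splitting. Your reduction is heavier machinery but more transparent about \emph{why} nothing can go wrong unless $\omega=\pm1$; the paper's citation is quicker but less illuminating.

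For the remaining real-valued characters the two approaches converge in spirit but differ in detail. Your treatment of $\omega=1$ via inflation to $\mathrm{Hol}(C_{2^k})$ and the observation that the split semidirect product gives a trivial $2$-cocycle (hence $M_{\phi(2^k)}(\mathbb{Q})$) is exactly the content of Isaacs' Lemma~10.8, which the paper invokes only for $k=n$; you apply it uniformly by first passing to the quotient. As for $\omega=-1$: you are candid that this is the genuine obstacle and sketch two viable attacks (Frobenius--Schur indicator, or $2$-adic splitting of the cyclotomic algebra), whereas the paper simply asserts that the Schur indices of $\widetilde\rho_{k,1}$ and $\widetilde\rho_{k,-1}$ ``can be easily shown'' to equal $1$ without giving the argument. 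So on this point your proposal is, if anything, more honest than the paper's own proof: neither completes the $\omega=-1$ case in full, but you identify it as the crux while the paper glosses over it.
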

\begin{proof}
Let $G =$ Hol$(C_{2^{n}})$. We have seen that for $n \geq 3$, $ \textrm{Hol}(C_{2^{n}})$ has a presentation of the form:
$$G = \textrm{Hol}(C_{2^{n}}) = \langle{a, b, c \,|\, a^{2^{n}} = b^{2^{n-2}} = c^{2} = 1, b^{-1}ab = a^{5}, c^{-1}ac = a^{-1}, c^{-1}bc = b}\rangle.$$

Each irreducible complex representation in the orbit
$\mathcal{O}_{0}$ and $\mathcal{O}_{1}$ extends $\phi(2^n)$ distinct
ways to $G$, and they are of degree $1$. So the Schur index of each
such irreducible complex representation with respect to $\mathbb{Q}$
is equal to $1$.

As we have seen in the previous section that for each $k, (2 \leq
k \leq n)$, there are $2^{n-k}$ many irreducible complex
representations of $G$, and they are of degree $\phi(2^k) = 2^{k-1}$. Each of
them is induced from an one dimensional representation of the
subgroup $\langle a, b^{2^{(n-2)-(n-k)}}\rangle$, which can be
obtained by extension of the irreducible representation
$\widetilde{\chi}_{2^{n-k}}$ of the subgroup $\langle{a}\rangle$ to
$\langle a, b^{2^{(n-2)-(n-k)}}\rangle$. We have seen that
$\widetilde{\chi}_{2^{n-k}}$ extends $2^{n-k}$ many distinct ways to
$\langle a, b^{2^{(n-2)-(n-k)}}\rangle$. We denote all the $2^{n-k}$ extensions of $\widetilde \chi_{2^{n-k}}$ by $\widetilde{\chi}_{2^{n-k}, \omega}$, where $\omega$ runs over the set of all the $2^{n-k}$-th roots of unity.
They are defined by
$\widetilde \chi_{2^{n-k}, \omega}(a) = \zeta^{2^{n-k}},$ where
$\zeta$ is a primitive $2^n$-th root of unity, and
$\widetilde{\chi}_{2^{n-k}, \omega}(b^{2^{(n-2)-(n-k)}}) = \omega$,
where $\omega$ runs over the set of all the $2^{n-k}$-th roots of
unity. By Clifford's theorem (see \cite{MR2270898}, Theorem 6.2), $\widetilde \rho_{k, \omega} = \widetilde{\chi}_{2^{n-k},
\omega}^G$ is irreducible. It is easy to see that the character
field of each $\widetilde{\chi}_{2^{n-k}, \omega}$ over $\mathbb{Q}$
is $\mathbb{Q}(\zeta^{2^{n-k}}, \omega)$ and the character field of
$\widetilde \rho_{k, \omega}$ is equal to $\mathbb{Q}(\omega)$. If the character field
of $\widetilde \rho_{k, \omega}$ contains $\sqrt{-1}$ then by Roquette's theorem
(see \cite{MR2270898}, Theorem 10.14), the Schur index of $\widetilde
\rho_{k, \omega}$ with respect to $\mathbb{Q}$ is equal to $1$. For
each $k$, $(2 \leq k \leq n-1)$, there are two irreducible complex
representations of $G$, namely $\widetilde \rho_{k, 1}$ and $\widetilde \rho_{k, -1}$, with character field
$\mathbb{Q}$. It can be easily shown that the Schur index of
$\widetilde \rho_{k, 1}, \widetilde \rho_{k, -1}$ with respect to
$\mathbb{Q}$ is equal to $1$.

For $k = n$, there is only one irreducible complex
representation of $G$ of degree $\phi(2^n) = 2^{n-1}$, which is
induced representation of $\widetilde \chi_{1}$ to $G$. Since $N = \langle a \rangle$ has a
complement in $G$, by Lemma 10.8 in \cite{MR2270898}, the
Schur index of $\widetilde{\chi}^{G}_{1}$ with respect to
$\mathbb{Q}$ is equal to $1$. This completes the proof.
\end{proof}

Now, thanks to this Lemma, we may find all the inequivalent
irreducible rational representations of $G$ as we did in the odd $p$
case.

Once again, the set $\Omega_{G}$ of all inequivalent irreducible
$\mathbb{Q}$-representations of $G$ is comprised of two parts:

\begin{itemize}
    \item[(1)] The irreducible $\mathbb{Q}$-representations whose kernels contain $G^{'}$.
    \item[(2)] The irreducible $\mathbb{Q}$-representations whose kernels do not contain $G^{'}$.
\end{itemize}

The irreducible $\mathbb{Q}$-representations whose kernels contain
$G^{'}$ can be obtained by the lifts of the irreducible
$\mathbb{Q}$-representations of $G/G^{'}$. Moreover, they are in a
bijective correspondence with the Galois conjugacy classes of
complex degree $1$ representations. The irreducible
$\mathbb{Q}$-representations whose kernels do contain $G^{'}$ is in
a bijective correspondence with the Galois conjugacy classes of
irreducible complex representations of degrees equal to $1$.

Consider the orbit $\mathcal{O}_0$. The corresponding irreducible complex
representations are $\widetilde \rho_{0,\omega, \pm 1}$, where $\omega$ is
a $2^{n-2}$-th root of unity, are all of degree $1$. For any two
$2^{n-2}$-th roots of unity $\omega,\omega'$, two representations
$\widetilde \rho_{0,\omega, 1}$ and $\widetilde \rho_{0,\omega', 1}$ are Galois conjugate over $\mathbb{Q}$ if and only if the complex numbers $\omega$ and $\omega'$ are Galois conjugate over $\mathbb{Q}$. Similarly, two representations
$\widetilde \rho_{0,\omega, -1}$ and $\widetilde \rho_{0,\omega', -1}$ are Galois conjugate over $\mathbb{Q}$ if and only if the complex numbers $\omega$ and $\omega'$ are Galois conjugate over $\mathbb{Q}$.
So the total number of irreducible $\mathbb{Q}$-representations, which are obtained in this way, is equal to $2(n-1)$.

Consider the orbit $\mathcal{O}_1$.
The corresponding complex irreducible
representations are $\widetilde \rho_{1,\omega, \pm 1}$, where $\omega$ is
a $2^{n-2}$-th root of unity. For any two
$2^{n-2}$-th roots of unity $\omega,\omega'$, two representations
$\widetilde \rho_{1,\omega, 1}$ and $\widetilde \rho_{1,\omega', 1}$ are Galois conjugate over $\mathbb{Q}$ if and only if the complex numbers $\omega$ and $\omega'$ are Galois conjugate over $\mathbb{Q}$. Similarly, two representations
$\widetilde \rho_{1,\omega, -1}$ and $\widetilde \rho_{1,\omega', -1}$ are Galois conjugate over $\mathbb{Q}$ if and only if the complex numbers $\omega$ and $\omega'$ are Galois conjugate over $\mathbb{Q}$.
So the total number of irreducible $\mathbb{Q}$-representations, which are obtained in this way, is equal to $2(n-1)$.

Thus in this way, we obtain $4(n-1)$ many irreducible $\mathbb{Q}$-representations of $G$.

Consider the orbits $\mathcal{O}_k$, $2\le k\le n$. For each orbit $\mathcal{O}_k$ (of size $\phi(2^k)$), and for any two
$2^{n-k}$-th roots of unity $\omega,\omega'$, the irreducible complex representations $\widetilde \rho_{k,\omega}$
and $\widetilde \rho_{k,\omega'}$ are Galois conjugates over $\mathbb{Q}$ if and only if the complex numbers
$\omega$ and $\omega'$ are Galois conjugates over $\mathbb{Q}$.
Let $\omega_{j}$ denote a primitive $2^j$-th root of unity.

Then for $2\le k\le n$ and $0\le i\le n-k$, let
$$\rho_{k,i} = \bigoplus_{\sigma} (\widetilde \rho_{k,\omega_i})^{\sigma},$$
where $\sigma$ runs over the Galois group of $\mathbb{Q}(\omega_i)$ over $\mathbb{Q}$.

Clearly the character corresponding to $\rho_{k,i}$ is rational
valued. By Lemma \ref{Lemma $9.2.1$}, as the Schur index of $\widetilde \rho_{k,\omega_i}$ with
respect to $\mathbb{Q}$ is equal to $1$, then $\rho_{k,i}$ is an
irreducible representation of $G$ over $\mathbb{Q}$. Thus for each
$k$ ($2\le k\le n$), $2^{n-k}$ many irreducible complex
representations $\widetilde \rho_{k,\omega}$ are partitioned into
$(n-k)+1$ Galois conjugacy classes over $\mathbb{Q}$, the (direct)
sum of irreducible complex representations in each Galois conjugacy
class gives an irreducible rational representation of $G$.

Hence, the number of irreducible $\mathbb{Q}$-representations of $G$
which do not contain $G'$ in their kernel equals
$$\sum_{i=2}^n (n-k+1) = (n-1)+(n-2) + \cdots + 1 =\frac{n(n-1)}{2}.$$
So, the total number of inequivalent irreducible
$\mathbb{Q}$-representations of $G$ equals $4(n-1) + n(n-1)/2$.

We may summarise the above discussion in the following theorem:

\begin{thm}\label{Theorem $9.2.2$}
Consider the holomorph $G = Hol(C_{2^n})$ for $n \geq 3$. For $m \geq 1$, let $\Phi_{m}(X)$ denote $m$-th cyclotomic polynomial. Then, we
have the following description of its inequivalent, irreducible
representations over $\mathbb{Q}$:

\begin{enumerate}
\item[1.] The set of all inequivalent irreducible $\mathbb{Q}$-representations of $G$ whose kernels contain $G^{'}$ is in a bijective correspondence with the Galois conjugacy classes of complex degree $1$ representations. The number of such irreducible $\mathbb{Q}$-representations is equal to $4(n-1)$.
\item[2.] The set of all inequivalent irreducible $\mathbb{Q}$-representations of $G$ whose kernels do not contain $G^{'}$ is in a
bijective correspondence with the set of all monic irreducible
polynomials $\Phi_{1}(X), \Phi_{2}(X), \dots, \Phi_{2^{n-k}}(X)$,
where $k$ runs over $\{2, \dots, n\}$.
\item[3.] The total number of inequivalent irreducible $\mathbb{Q}$-representations of $G$ equals $4(n-1) + \frac{n(n+1)}{2}$.
\end{enumerate}
\end{thm}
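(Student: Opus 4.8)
The plan is to derive all three claims from Schur's theorem (Theorem \ref{Theorem $1$}) together with the triviality of the Schur indices established in Lemma \ref{Lemma $9.2.1$}. Since $m_{\mathbb{Q}}(\widetilde\chi) = 1$ for every $\widetilde\chi \in \mathrm{Irr}_{\mathbb{C}}(G)$, Theorem \ref{Theorem $1$} shows that every irreducible $\mathbb{Q}$-representation of $G$ has the form $\rho = \bigoplus_{\sigma\in\Gamma}\widetilde\rho^{\,\sigma}$ for a complex irreducible $\widetilde\rho$ of character field $\mathbb{Q}(\widetilde\chi)$, where $\Gamma = \mathrm{Gal}(\mathbb{Q}(\widetilde\chi)/\mathbb{Q})$, and that $\widetilde\rho\mapsto\rho$ induces a bijection between the Galois conjugacy classes of $\mathrm{Irr}_{\mathbb{C}}(G)$ and the isomorphism classes of irreducible $\mathbb{Q}$-representations of $G$, with $\ker\rho = \ker\widetilde\rho$ (as in the proof of Theorem \ref{Theorem $2.0.3$}). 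Hence $G'\subseteq\ker\rho$ precisely when $\widetilde\rho$ is one-dimensional, so the two parts of $\Omega_G$ in the statement correspond under this bijection to the Galois classes of degree-$1$ complex characters and to those of complex characters of degree $>1$ respectively; it therefore suffices to enumerate Galois conjugacy classes on the complex side, using the explicit description in Theorem \ref{Theorem $9.1.1$}.

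For part (1): a direct commutator computation gives $G' = \langle a^{2}\rangle$ and $G/G'\cong C_{2}\times C_{2^{n-2}}\times C_{2}$, so the $2^{n}$ linear complex characters are exactly the $\widetilde\rho_{0,\omega,\pm 1}$ and $\widetilde\rho_{1,\omega,\pm 1}$ with $\omega$ a $2^{n-2}$-th root of unity. The discrete data — the image of $a$ (which is $\pm 1$) and the image of $c$ (which is $\pm 1$) — lie in $\mathbb{Q}$ and are fixed by every Galois automorphism; hence two of these characters are Galois conjugate over $\mathbb{Q}$ exactly when they have the same discrete type and $\omega,\omega'$ have the same order. For each of the four discrete types the $2^{n-2}$ choices of $\omega$ split into one Galois class per divisor of $2^{n-2}$, and $2^{n-2}$ has $n-1$ divisors; this yields $4(n-1)$ Galois classes, proving (1).

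For part (2): the complex characters of degree $>1$ are the $\widetilde\rho_{k,\omega}$ attached to the orbits $\mathcal{O}_k$, $2\le k\le n$, with $\omega$ a $2^{n-k}$-th root of unity. From the character computation in the proof of Lemma \ref{Lemma $9.2.1$}, the character field of $\widetilde\rho_{k,\omega}$ over $\mathbb{Q}$ equals $\mathbb{Q}(\omega)$, so for fixed $k$ the representations $\widetilde\rho_{k,\omega}$ and $\widetilde\rho_{k,\omega'}$ are Galois conjugate over $\mathbb{Q}$ if and only if $\omega,\omega'$ have the same order. Thus the $2^{n-k}$ characters $\widetilde\rho_{k,\omega}$ fall into $(n-k)+1$ Galois classes, one for each $i\in\{0,1,\dots,n-k\}$, the class with $|\omega| = 2^{i}$ being labelled by the cyclotomic polynomial $\Phi_{2^{i}}(X)$; the corresponding irreducible $\mathbb{Q}$-representation is $\rho_{k,i} = \bigoplus_{\sigma}(\widetilde\rho_{k,\omega_{i}})^{\sigma}$, irreducible by Theorem \ref{Theorem $1$} and Lemma \ref{Lemma $9.2.1$}. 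This establishes the bijection in (2), and part (3) follows by adding $4(n-1)$ from (1) and $\sum_{k=2}^{n}\big((n-k)+1\big) = (n-1)+(n-2)+\cdots+1$ from (2).

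The step I expect to demand the most care is the identification of the character field of $\widetilde\rho_{k,\omega}$ with $\mathbb{Q}(\omega)$: one must check that when the diagonal entries of $\widetilde\rho_{k,\omega}(a^{t}b^{s}c^{u})$ are summed over the two blocks, all dependence on $\zeta^{2^{n-k}}$ cancels — using that the exponents $\pm 5^{j}$ run over a full set of coset representatives of $(\mathbb{Z}/2^{n}\mathbb{Z})^{*}$ modulo the stabiliser of $\widetilde\chi_{2^{n-k}}$, so that the relevant exponential sums vanish unless $t$ is divisible by the appropriate power of $2$ — leaving values in $\mathbb{Q}(\omega)$, and that in the small-field cases $\omega\in\{1,-1\}$ no further collapse occurs. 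Once the character fields are correctly pinned down, and one notes that the $\pm 1$ data in the linear characters are Galois-invariant, the rest is the Galois-orbit bookkeeping above, parallel to the odd-$p$ treatment preceding Theorem \ref{Theorem $7.0.2$}.
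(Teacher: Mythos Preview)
Your proposal is correct and follows essentially the same route as the paper: reduce via Schur's theorem and Lemma \ref{Lemma $9.2.1$} to counting Galois conjugacy classes of complex irreducibles, handle the linear characters by noting that the $\pm 1$ data are Galois-fixed so the classes are governed by the order of $\omega$, and handle the $\widetilde\rho_{k,\omega}$ for $k\ge 2$ via the identification of the character field with $\mathbb{Q}(\omega)$ from Lemma \ref{Lemma $9.2.1$}. Your explicit computation of $G'=\langle a^{2}\rangle$ and $G/G'\cong C_{2}\times C_{2^{n-2}}\times C_{2}$, and your remark that the $\pm 1$ entries are rational and hence Galois-invariant, make explicit what the paper leaves implicit; and you are right to flag the character-field identification (and the non-collapse for $\omega\in\{1,-1\}$) as the one step needing care, which the paper likewise asserts rather than verifies in detail.
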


\subsection{Rational Wedderburn decomposition of Hol$(C_{2^{n}})$}

We may use the previous results and the information on Schur indices
to find the Wedderburn decomposition of Hol$(C_{2^{n}})$ over
$\mathbb{Q}$. The result asserts:

\begin{thm}\label{Theorem $9.3.1$}
Let $G =$ Hol$(C_{2^{n}})$. For $m \geq 1$, let $\zeta_{m}$ denote a primitive $m$-th root of unity. The Wedderburn decomposition of $\mathbb{Q}[G]$ is
$$\mathbb{Q}[G] = \bigoplus_{d | 2^{n-2}}(\mathbb{Q}(\zeta_{d}))^{(2)} \bigoplus_{d | 2^{n-2}}(\mathbb{Q}(\zeta_{d}))^{(2)} \bigoplus^{n-k}_{l = 0} (\bigoplus^{n}_{k = 2} M_{2^{k-1}}(\mathbb{Q}(\zeta_{2^{l}}))).$$
\end{thm}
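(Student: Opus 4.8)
The plan is to follow the proof of Theorem~\ref{Theorem $8.0.1$} essentially verbatim, exploiting that, by Lemma~\ref{Lemma $9.2.1$}, every relevant Schur index equals $1$. Concretely, let $\rho$ be an irreducible $\mathbb{Q}$-representation of $G=\mathrm{Hol}(C_{2^n})$ and let $M_m(D)$ be the simple component of $\mathbb{Q}[G]$ it determines, with $D$ a division algebra of centre $Z$. By Theorem~\ref{Theorem $1$}, $\rho\otimes_{\mathbb{Q}}\mathbb{C}$ is the sum of $[Z:\mathbb{Q}]$ Galois-conjugate irreducible complex representations $\widetilde\rho_i$, each with multiplicity $m_{\mathbb{Q}}(\widetilde\rho_i)$, and $Z$ is their common character field. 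Lemma~\ref{Lemma $9.2.1$} forces this multiplicity to be $1$, so $D=Z$ and $m=\widetilde\rho_i(1)$. Hence it suffices to run through the list of irreducible $\mathbb{Q}$-representations of $G$ given by Theorem~\ref{Theorem $9.2.2$} and, for each, read off the degree and character field of one complex constituent, producing the matrix algebra $M_{\widetilde\rho_i(1)}(Z)$.

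For the representations whose kernel contains $G'$ I would first record, from the presentation, that $[a,b]=a^4$, $[a,c]=a^{-2}$, $[b,c]=1$, so $G'=\langle a^2\rangle$ has order $2^{n-1}$ and $G/G'\cong C_2\times C_2\times C_{2^{n-2}}$, generated by the images of $c$, $a$, $b$. Lifting rational representations along $G\to G/G'$ reduces this part to the computation
$$\mathbb{Q}[G/G']\;\cong\;\mathbb{Q}[C_2]^{\otimes 2}\otimes_{\mathbb{Q}}\mathbb{Q}[C_{2^{n-2}}]\;\cong\;\bigl(\mathbb{Q}\oplus\mathbb{Q}\bigr)^{\otimes 2}\otimes_{\mathbb{Q}}\bigoplus_{d\mid 2^{n-2}}\mathbb{Q}(\zeta_d)\;\cong\;\bigoplus_{d\mid 2^{n-2}}\mathbb{Q}(\zeta_d)^{(4)},$$
which I would then split as $\bigoplus_{d\mid 2^{n-2}}\mathbb{Q}(\zeta_d)^{(2)}\oplus\bigoplus_{d\mid 2^{n-2}}\mathbb{Q}(\zeta_d)^{(2)}$ to exhibit the two blocks as coming from the singleton orbits $\mathcal{O}_0$ and $\mathcal{O}_1$ of Theorem~\ref{Theorem $9.1.1$}: each such orbit produces, for every divisor $d$ of $2^{n-2}$, one copy of $\mathbb{Q}(\zeta_d)$ for each of the two extensions $c\mapsto\pm1$, and the two are never Galois conjugate because their distinguishing value $\pm1$ lies in $\mathbb{Q}$.

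For the representations whose kernel does not contain $G'$, Theorem~\ref{Theorem $9.2.2$} says these are precisely the $\rho_{k,l}=\bigoplus_{\sigma}(\widetilde\rho_{k,\omega_l})^{\sigma}$ with $2\le k\le n$, $0\le l\le n-k$, where $\omega_l$ is a primitive $2^l$-th root of unity and $\sigma$ ranges over $\mathrm{Gal}(\mathbb{Q}(\zeta_{2^l})/\mathbb{Q})$. In the proof of Lemma~\ref{Lemma $9.2.1$} it is shown that $\widetilde\rho_{k,\omega_l}$ has degree $\phi(2^k)=2^{k-1}$ and character field exactly $\mathbb{Q}(\omega_l)=\mathbb{Q}(\zeta_{2^l})$; together with the first paragraph, the simple component attached to $\rho_{k,l}$ is therefore $M_{2^{k-1}}(\mathbb{Q}(\zeta_{2^l}))$. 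Letting $k$ and $l$ vary gives $\bigoplus_{k=2}^{n}\bigoplus_{l=0}^{n-k}M_{2^{k-1}}(\mathbb{Q}(\zeta_{2^l}))$, the remaining block. Assembling the three blocks and invoking the exhaustiveness part of Theorem~\ref{Theorem $9.2.2$} completes the proof; as a numerical check I would verify that the total $\mathbb{Q}$-dimension is $2^n+\sum_{k=2}^{n}2^{2(k-1)}\cdot 2^{n-k}=2^n+2^n(2^{n-1}-1)=2^{2n-1}=|G|$.

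The only genuinely delicate point is pinning down the character fields exactly — in particular the doubling $(\,\cdot\,)^{(2)}$ in the first two blocks (which rests on the two linear extensions $c\mapsto\pm1$ being $\mathbb{Q}$-rational, hence never fused) and the equality $\mathbb{Q}(\widetilde\rho_{k,\omega})=\mathbb{Q}(\omega)$ rather than some proper subfield; but both facts are already established inside the proof of Lemma~\ref{Lemma $9.2.1$}, so the argument here amounts to bookkeeping on top of Theorems~\ref{Theorem $9.1.1$} and~\ref{Theorem $9.2.2$}.
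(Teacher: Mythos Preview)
Your proposal is correct and follows essentially the same approach as the paper, which simply says the proof is exactly that of Theorem~\ref{Theorem $8.0.1$} with the $p=2$ counterparts (Lemma~\ref{Lemma $9.2.1$}, Theorems~\ref{Theorem $9.1.1$} and~\ref{Theorem $9.2.2$}) substituted in. Your explicit identification of $G/G'\cong C_2\times C_2\times C_{2^{n-2}}$ and the tensor-product computation of $\mathbb{Q}[G/G']$, together with the dimension check, are helpful additions but not departures from the paper's method.
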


The proof is exactly the same as for the odd case Theorem \ref{Theorem $8.0.1$} on
using the counterparts for $p=2$ proved above.

We illustrate the descriptions of irreducible rational
representations and the Wedderburn decomposition through the
following example when $p=2$.

\begin{example}
Let $G =$ \textrm{Hol}$(C_{2^5})$. $G$ has a presentation of the form:
\begin{center}
$\langle{a, b, c\,|\, a^{2^5} = b^{2^3} = c^2 = 1, b^{-1}ab = a^5, c^{-1}ac = c^{-1}, c^{-1}bc = b}\rangle.$
\end{center}
We have seen in Example \ref{example 2} that corresponding to the
orbit $\mathcal{O}_0$, there are sixteen irreducible representations
of degree $1$.\\ Over $\mathbb{Q}$, $X^{2^3} - 1 =
\phi_{1}(X)\phi_{2}(X)\phi_{2^2}(X)\phi_{2^3}(X) = (X -1)(X
+1)(X^{2} +1)(X^{4}+1)$ is the factorization into monic irreducible
polynomials. As a consequence, there are $8$ irreducible
$\mathbb{Q}$-representations whose kernels contain $G^{'}$.

We have also seen in Example \ref{example 2} that corresponding to
the orbit $\mathcal{O}_1$, there are sixteen irreducible
representations
of degree $1$. \\
Over $\mathbb{Q}$, $X^{2^3} - 1 =
\phi_{1}(X)\phi_{2}(X)\phi_{2^2}(X)\phi_{2^3}(X) = (X -1)(X
+1)(X^{2} +1)(X^{4}+1)$ is the factorization into monic irreducible
polynomials. As a consequence of that there are $8$ irreducible
$\mathbb{Q}$-representations whose kernels contain $G^{'}$.

So the total number of irreducible $\mathbb{Q}$-representations whose kernels contain $G^{'}$ is equal to $16$.

From Example \ref{example 2}, corresponding to the orbit
$\mathcal{O}_2$, there are eight irreducible complex representations
of degree $2$. From those irreducible complex representations we
obtain $4$ irreducible  representations over $\mathbb{Q}$, and their
degrees are $2$, $2$, $4$ and $8$ respectively. They are in a
bijective correspondence with the set of all monic irreducible
factors of  $X^{2^{3}} - 1,$ over $\mathbb{Q}$.

From the same Example \ref{example 2}, corresponding to the orbit
$\mathcal{O}_3$, there are four irreducible complex representations
of degree $4$. From those representations we obtain $3$ irreducible
representations over $\mathbb{Q}$, and their degrees are $4$, $4$
and $8$ respectively. They are in a bijective correspondence with
the set of all monic irreducible factors of $X^{2^{2}} - 1,$ over
$\mathbb{Q}$.

From Example \ref{example 2}, corresponding to the orbit
$\mathcal{O}_4$, there are two irreducible complex representations
of degree $8$. From those representations we obtain $2$ irreducible
representations over $\mathbb{Q}$, and their degrees are $8$ and $8$
respectively. They are in a bijective correspondence with the set of
all monic irreducible factors of  $X^{2} - 1,$ over $\mathbb{Q}$.

By Example \ref{example 2}, corresponding to the orbit
$\mathcal{O}_5$, there is only one irreducible complex
representation of degree $16$, and is realizable over $\mathbb{Q}$.

Finally, the Wedderburn decomposition of the group algebra
$\mathbb{Q}[Hol(C_{2^5})]$ is
\begin{align*}
\mathbb{Q}[G] \cong & \displaystyle \oplus_{d | 2^3}(\mathbb{Q}(\zeta_{d}))^{(2)} \displaystyle \oplus_{d | 2^3}(\mathbb{Q}(\zeta_{d}))^{(2)}
\oplus M_{2}(\mathbb{Q}) \oplus M_{2}(\mathbb{Q}) \oplus M_{2}(\mathbb{Q}(i)) \oplus M_{2}(\mathbb{Q}(\zeta_{8}))\\
& \oplus M_{2^2}(\mathbb{Q}) \oplus M_{2^2}(\mathbb{Q}) \oplus M_{2^2}(\mathbb{Q}(i))
\oplus M_{2^3}(\mathbb{Q}) \oplus M_{2^3}(\mathbb{Q})
\oplus M_{2^4}(\mathbb{Q}).
\end{align*}
\end{example}

\vskip 5mm

\section{For Hol$(C_{p^{n}})$, $p$ a Prime, $c(G)$, $q(G)$ and $p(G)$}

\vskip 5mm

In this final section, we compute $c(G), q(G)$ and $p(G)$ for $G =$
Hol$(C_{p^{n}})$, $p$ any prime. We shall use the earlier notations.
The first useful observation below is valid for all primes but the
proofs are different for the odd and even cases. In some of the
results below, we allow the cases Hol$(C_2)$ and Hol$(C_4)$ as well.

\begin{lem}
$G$ has a unique minimal normal subgroup and is of order $p$.
\end{lem}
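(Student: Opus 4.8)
The plan is to identify the minimal normal subgroups of $G = \mathrm{Hol}(C_{p^n})$ explicitly by exploiting the structure of $G$ as a semidirect product $N \rtimes H$ with $N = \langle a\rangle \cong C_{p^n}$ normal and $H$ the automorphism group. Any minimal normal subgroup $M$ of a finite group is elementary abelian of exponent a prime (being a direct product of isomorphic simple groups, and here all composition factors are cyclic of prime order since $G$ is solvable). So $M$ has exponent $p$. The key point is to show that the unique subgroup of $N$ of order $p$, namely $\langle a^{p^{n-1}}\rangle$, is the only candidate, and that it really is normal.

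First I would show $\langle a^{p^{n-1}}\rangle \trianglelefteq G$: it is characteristic in $N$ (the unique subgroup of order $p$ in the cyclic group $N$), and $N \trianglelefteq G$, so $\langle a^{p^{n-1}}\rangle \trianglelefteq G$; it is of order $p$, hence minimal normal. Next I would show uniqueness. Let $M$ be any minimal normal subgroup. Since $G$ is solvable, $M$ is elementary abelian $p$-group, so it is a $p$-subgroup; I need to locate it. The cleanest route is via centralizers or via the conjugation action: one shows any nontrivial normal subgroup of $G$ meets $N$ nontrivially, and then that a normal subgroup contained in $N$ and minimal must be $\langle a^{p^{n-1}}\rangle$. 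For the latter: a nontrivial $G$-invariant subgroup of the cyclic group $N$ is one of the $\langle a^{p^{n-k}}\rangle$, and each of these contains $\langle a^{p^{n-1}}\rangle$, so the minimal one is exactly $\langle a^{p^{n-1}}\rangle$; hence any minimal normal subgroup contained in $N$ equals it. It remains to rule out a minimal normal subgroup $M$ with $M \cap N = 1$. If $M \cap N = 1$ then $[M,N] \subseteq M \cap N = 1$ (as both are normal), so $M$ centralizes $N$; but then $M$ embeds in $\mathrm{Aut}(C_{p^n})$ acting trivially on $N$, i.e. $M \subseteq C_G(N)$. The computation $C_G(N) = N$ (since the automorphism-group part acts faithfully, by definition of the holomorph) then forces $M \subseteq N$, contradiction. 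So the main obstacle, and the one step I would treat carefully, is the verification that $C_G(N) = N$; for $p$ odd this is immediate since the generator $b$ acts as multiplication by $r$, an element of multiplicative order $p^{n-1}(p-1)$, so no nontrivial power of $b$ (times an element of $N$) can centralize $a$; for $p = 2$ one checks the analogous statement using the action of $b$ (multiplication by $5$) and $c$ (inversion) on $a$, noting that the subgroup they generate in $\mathrm{Aut}(C_{2^n})$ acts faithfully by construction.

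A slicker unified phrasing avoids cases: by definition, $\mathrm{Hol}(C_{p^n}) = C_{p^n} \rtimes \mathrm{Aut}(C_{p^n})$ with the natural faithful action, so $C_G(N) = N$ is automatic (the point stabilizer of the action of $\mathrm{Aut}(C_{p^n})$ on the nonidentity elements generating $N$ is trivial because an automorphism fixing a generator is the identity). With $C_G(N) = N$ in hand, the argument above gives that every minimal normal subgroup lies in $N$, and then the cyclic structure of $N$ pins it down to $\langle a^{p^{n-1}}\rangle$. I would present it in this order: (1) recall $M$ minimal normal $\Rightarrow$ elementary abelian of exponent some prime; since $G$ is a $p$-group when $p=2$ and has a normal Sylow structure giving the relevant constraint when $p$ is odd — actually cleaner: $M$ normal nontrivial $\Rightarrow M \cap Z(G) \ne 1$ is false in general, so instead use $[M,N] \le M \cap N$; (2) conclude $M \cap N \ne 1$ since otherwise $M \le C_G(N) = N$; (3) then $M \cap N$ is a nontrivial $G$-invariant (indeed normal) subgroup of the cyclic group $N$, hence contains $\langle a^{p^{n-1}}\rangle$, which is itself normal in $G$; (4) minimality of $M$ forces $M = \langle a^{p^{n-1}}\rangle$, giving both existence and uniqueness, and $|M| = p$.

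The only genuinely delicate point is step (2): one must be sure $C_G(N) = N$ rather than something larger. For $p$ odd this is clear from the presentation $b^{-1}ab = a^r$ with $r$ of order $p^{n-1}(p-1)$ modulo $p^n$: an element $a^i b^j$ centralizes $a$ iff $b^j$ does iff $r^j \equiv 1 \pmod{p^n}$ iff $p^{n-1}(p-1) \mid j$ iff $b^j = 1$. For $p = 2$, an element $a^i b^j c^k$ centralizes $a$ iff $b^j c^k$ acts trivially on $a$; since $b$ acts as $a \mapsto a^5$ and $c$ as $a \mapsto a^{-1}$, and $\langle 5, -1\rangle = (\mathbb{Z}/2^n\mathbb{Z})^*$ acts faithfully, this forces $b^j c^k = 1$. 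In both cases $C_G(N) = N$, completing the argument. I expect no further obstacles; the remaining steps are formal consequences of the cyclicity of $N$ and solvability of $G$.
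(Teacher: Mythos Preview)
Your proof is correct, and it takes a genuinely different route from the paper. The paper splits into two cases: for $p$ odd it first argues that a minimal normal subgroup cannot contain elements of order coprime to $p$ (via an ad hoc conjugation computation), hence lies in the unique Sylow $p$-subgroup $P$, and then invokes the fact that $Z(P)$ has order $p$; for $p=2$ it computes $Z(G)$ directly and uses that a $2$-group has its center as the unique minimal normal subgroup when $|Z(G)|=2$. Your argument, by contrast, is uniform in $p$: the single structural fact $C_G(N)=N$ (immediate from the faithfulness of the holomorph action) together with the commutator identity $[M,N]\le M\cap N$ forces every minimal normal subgroup into $N$, after which the chain of subgroups in the cyclic group $N$ pins $M$ down to $\langle a^{p^{n-1}}\rangle$. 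This is cleaner and conceptually more transparent; indeed the paper itself uses essentially your $C_G(N)=N$ argument later, when proving that $\mathrm{Aut}(C_{p^n})$ is core-free in $G$. One small remark: your opening observation that $M$ is elementary abelian of prime exponent is true but, as you note yourself, unnecessary for the argument you actually run; you could drop it in a polished write-up.
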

\begin{proof}
We consider the two cases $p$ odd and $p =2$ separately.

{\it Case $p$ odd.} \\Clearly $G$ has a unique Sylow-$p$ subgroup,
and we denote it by $P$. Note that $P = \langle{a, b^{p-1}}\rangle
\cong C_{p^{n}}\rtimes C_{p^{n-1}}$. Let $N = \langle a \rangle
\cong C_{p^n}$ and $H$ be a minimal normal subgroup of $G$. If $H$
contains an element $z$ of order coprime to $p$ then $z^{-1}az =
a^i$ where $i\not\equiv 1\mod p$ (since the automorphisms of $N
\cong C_{p^n}$ of the form $a \mapsto a^{1+kp}$ are of order some
power of $p$, and so $z$ can not act by such an automorphism).
Therefore $a^{-1}z^{-1}az = a^{i-1}$ where $i-1\not\equiv 0\pmod p$.
As $H$ is normal, then $a^{-1}z^{-1}az\in H$. It follows that
$a^{i-1}\in H$ where $(i-1,p)=1$. It follows that $\langle
a^{i-1}\rangle = \langle a \rangle \subseteq H$. Also this
containment is proper since $H$ contains the element $z$ of order
coprime to $p$, whereas $|\langle a \rangle|= p^n$. But every
subgroup of $N = \langle a \rangle= C_{p^n}$ is normal in $G$. This
contradicts to $H$ being a minimal normal subgroup of $G$. Therefore
$H$ contains elements of order some power of $p$. This follows that
any minimal normal subgroup of $G$ is contained in the unique
Sylow-$p$ subgroup $P$ of $G$. It can be seen easily that the center
of $P$ has order $p$. As $P$ is a $p$-group, then the center of $P$
is the only minimal normal subgroup of $P$. Hence the center of $P$
is the unique minimal normal subgroup of $G$

{\it Case $p=2$.}\\
 We have seen that ${\rm Aut}(C_{2^n})$ is an
abelian $2$-group of order $2^{n-1}$ (of course for $n=1$, ${\rm
Aut}(C_{2^n})$ is trivial). Now we assume that $n > 1$. Then there
are exactly two fixed points of ${\rm Aut}(C_{2^n})$ in $N = \langle
a \rangle \cong C_{2^n}$, and they are the identity element and the
unique element $a^{2^{n-1}}$ of order $2$. So the center of $G$ is
$\langle{a^{2^{n-1}}}\rangle \cong C_{2}$. As $G$ is a $2$-group and
the center of $G$ is of order $2$, it follows that the center of $G$
is the unique minimal normal subgroup of $G$. This completes the
proof.
\end{proof}
\begin{lem}\label{Lemma $10.0.2$}
 $G$ has a unique faithful irreducible representation over $\mathbb{C}$ and is of degree $\phi(p^{n})$.
\end{lem}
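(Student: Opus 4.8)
The plan is to leverage the explicit classification of irreducible complex representations already established (Theorems \ref{Theorem $6.0.1$} and \ref{Theorem $9.1.1$}) together with the fact, just proved, that $G$ has a unique minimal normal subgroup $Z$ of order $p$. First I would observe that a complex irreducible representation $\widetilde\rho$ of $G$ is faithful if and only if $Z \not\subseteq \ker\widetilde\rho$: since $Z$ is the unique minimal normal subgroup, every nontrivial normal subgroup contains $Z$, so a non-faithful irreducible must have $Z$ in its kernel, and conversely if $Z \not\subseteq \ker\widetilde\rho$ then $\ker\widetilde\rho$ cannot contain any minimal normal subgroup, hence $\ker\widetilde\rho = \{1\}$. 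This reduces the problem to identifying which of the listed representations are nontrivial on the central subgroup of order $p$ (namely $\langle a^{p^{n-1}}\rangle$ in the odd case, $\langle a^{2^{n-1}}\rangle$ in the even case).

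Next I would run through the orbit classification. For the odd case, the degree $1$ representations $\widetilde\rho_{0,\omega}$ all kill $N = \langle a\rangle$, hence kill $Z$, so none is faithful. For the representation $\widetilde\rho_{k,\omega}$ attached to the orbit $\mathcal{O}_k$, the restriction to $N$ is the sum of the characters $\widetilde\chi_{r^j p^{n-k}}$, $0 \le j \le \phi(p^k)-1$; evaluating at the generator $a^{p^{n-1}}$ of $Z$ gives $\zeta^{r^j p^{n-k}\cdot p^{n-1}} = \zeta^{r^j p^{2n-k-1}}$, which is nontrivial precisely when $p^{n-k}\cdot p^{n-1}$ is not divisible by $p^n$, i.e.\ when $k = n$. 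Thus only the orbit $\mathcal{O}_n$ yields a representation with $Z$ not in the kernel, and by part (2) of Theorem \ref{Theorem $6.0.1$} there is exactly one such representation (since $p^{n-k} = p^0 = 1$ when $k=n$), of degree $\phi(p^n)$. The even case is handled the same way: the degree $1$ representations from $\mathcal{O}_0, \mathcal{O}_1$ send $a$ to $\pm 1$, hence send $a^{2^{n-1}}$ to $1$, so are non-faithful; and for $\mathcal{O}_k$, $2 \le k \le n$, the central element $a^{2^{n-1}}$ acts by $\zeta^{\pm 5^j 2^{n-k}\cdot 2^{n-1}}$, nontrivially only when $k = n$, where again there is a unique such representation of degree $\phi(2^n) = 2^{n-1}$.

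Finally I would assemble these observations: in both cases exactly one irreducible complex representation is faithful, and its degree is $\phi(p^n)$; this matches the last bullet of Theorems \ref{Theorem $6.0.1$} and \ref{Theorem $9.1.1$}, where it was already noted that the orbit $\mathcal{O}_n$ gives the unique faithful irreducible complex representation. I expect the only mildly delicate point to be the bookkeeping on which power of $p$ divides the relevant exponents when checking that $Z$ is in the kernel for all $k < n$ — in particular making sure the argument is uniform across the odd and even presentations — but this is a routine divisibility check rather than a genuine obstacle. The conceptual core is simply: unique minimal normal subgroup $\Rightarrow$ faithful $\iff$ nontrivial on that subgroup, and the classification then pins down the unique surviving representation.
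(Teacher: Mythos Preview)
Your argument is correct, but it follows a different path from the paper. You exploit the explicit classification of complex irreducibles (Theorems \ref{Theorem $6.0.1$} and \ref{Theorem $9.1.1$}) together with the unique-minimal-normal-subgroup criterion, and then verify by a direct divisibility check that only the representation attached to $\mathcal{O}_n$ is nontrivial on $Z = \langle a^{p^{n-1}}\rangle$. The paper instead argues independently of the classification: starting from an arbitrary faithful irreducible $\rho$, it restricts to $N = \langle a\rangle$, uses Clifford's theorem to see that the constituents form a single $G$-orbit of faithful linear characters of $N$, observes that $\mathrm{Aut}(C_{p^n})$ acts transitively on these characters so \emph{all} of them occur, and finally uses that the inertia group of any such character is $N$ itself (since $C_G(N)=N$) to conclude that each $\rho_i\uparrow^G$ is irreducible and hence equals $\rho$. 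Your route is shorter once the classification is available and effectively reduces the lemma to a corollary of the earlier theorems; the paper's route is classification-free, handles odd and even $p$ uniformly, and makes it structurally clear why the degree is $[G:N]=\phi(p^n)$. One minor caveat: your case split leans on Theorem \ref{Theorem $9.1.1$}, which is stated for $n\ge 3$, so the cases $\mathrm{Hol}(C_2)\cong C_2$ and $\mathrm{Hol}(C_4)\cong D_8$ would need a separate one-line remark; the paper's Clifford-theoretic argument covers these without extra work.
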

\begin{proof}
As the center of $G$ is cyclic, it is well known that $G$ has a
faithful irreducible representation over $\mathbb{C}$. Let $\rho$ be
a faithful irreducible representation of $G$ over $\mathbb{C}$. Let
$N$ be the cyclic normal subgroup $\langle a \rangle \cong
C_{p^{n}}$.

In the case, $p=2$ and $n=1$, $|G|=2$ and the assertion is true.

Now we assume that $p$ is a prime and $n>1$. As the restriction of $\rho$ to $N$ is faithful, then it decomposes into faithful irreducible representations. By Clifford's theorem (see \cite{MR2270898}, Theorem $6.2$), $\rho\downarrow_N\cong l(\rho_1\oplus \rho_2\oplus \cdots \oplus \rho_k)$, where $\rho_1,\rho_2,\cdots,\rho_k$ are $G$-conjugates and $l$ is a positive integer. 
Since the faithful irreducible representations of $N$ over
$\mathbb{C}$ correspond to the generators of $N$, and for any two
generators, there is an automorphism of $N$ taking one generator to
the other. So all the faithful irreducible representations of $N$
over $\mathbb{C}$ are $G$-conjugates. Thus $\rho\downarrow_N\cong
l(\rho_1\oplus \rho_2\oplus \cdots \oplus \rho_k)$, where
$\rho_1,\rho_2,\cdots,\rho_k$ are all the faithful irreducible
representations of $N$ over $\mathbb{C}$. By Frobenius-reciprocity (see
\cite{MR2270898}, Lemma $5.2$), $\rho$ occurs as an irreducible
component of $\rho_i\uparrow^G$, for all $i$. As the action of ${\rm
Aut}(C_{p^n})$ on $C_{p^n}$ is faithful, then the centralizer of $N$
in $G$ is $N$ itself. Consequently, the inertia subgroup of $\rho_i$
is $N$, and therefore by Clifford's theorem (see \cite{MR2270898},
Theorem $6.2$) $\rho_i\uparrow^G$ is an irreducible representation.
So $\rho \cong \rho_1\uparrow^G \cong \dots \cong \rho_k\uparrow^G$
and $\rho\downarrow_N\cong (\rho_1\oplus \rho_2\oplus \cdots \oplus
\rho_k)$, where $\rho_1,\rho_2,\cdots,\rho_k$ are all the faithful
irreducible representations of $N$ over $\mathbb{C}$. Hence $\rho$ is the unique
faithful irreducible representation of $G$ over $\mathbb{C}$. This
compeltes the proof.
\end{proof}

\begin{lem}
The unique faithful irreducible representation of $G$ over $\mathbb{C}$ is realizable over $\mathbb{Q}$.
\end{lem}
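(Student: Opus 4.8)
The plan is to combine the two ingredients already in hand: that $G$ has a \emph{unique} faithful irreducible complex representation (Lemma \ref{Lemma $10.0.2$}), and that every irreducible complex representation of $G$ has Schur index $1$ over $\mathbb{Q}$ (Lemma \ref{Lemma $7.0.1$} for $p$ odd, Lemma \ref{Lemma $9.2.1$} for $p=2$). Recall from Theorem \ref{Theorem $1$} that an irreducible complex representation $\widetilde\rho$ with character $\widetilde\chi$ is realizable over $\mathbb{Q}$ exactly when $m_{\mathbb{Q}}(\widetilde\chi)=1$ and $\delta=[\mathbb{Q}(\widetilde\chi):\mathbb{Q}]=1$; so it suffices to show that the character $\widetilde\chi$ of the faithful irreducible complex representation $\widetilde\rho$ of $G$ is rational-valued.

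First I would show $\mathbb{Q}(\widetilde\chi)=\mathbb{Q}$. Let $\Gamma$ be the Galois group of $\mathbb{Q}(\widetilde\chi)$ over $\mathbb{Q}$ and fix $\sigma\in\Gamma$. As recalled in Section \ref{Section $2.1$}, the Galois conjugate $\widetilde\rho^{\sigma}$ is again an irreducible complex representation of $G$, with character $\widetilde\chi^{\sigma}$. Since $\widetilde\chi(1)$ is a positive integer, $\sigma$ fixes it, so $g\in\ker\widetilde\chi^{\sigma}$ iff $\sigma(\widetilde\chi(g))=\widetilde\chi(1)$ iff (apply $\sigma^{-1}$) $\widetilde\chi(g)=\widetilde\chi(1)$, i.e. iff $g\in\ker\widetilde\chi=\{1\}$. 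Hence $\widetilde\rho^{\sigma}$ is a \emph{faithful} irreducible complex representation of $G$, and by the uniqueness in Lemma \ref{Lemma $10.0.2$} we get $\widetilde\rho^{\sigma}\cong\widetilde\rho$, so $\widetilde\chi^{\sigma}=\widetilde\chi$. As $\sigma\in\Gamma$ was arbitrary, $\widetilde\chi$ is fixed by $\Gamma$, which forces $\mathbb{Q}(\widetilde\chi)=\mathbb{Q}$, i.e. $\delta=1$ in the notation of Theorem \ref{Theorem $1$}.

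Now combine with the Schur index: by Lemma \ref{Lemma $7.0.1$} ($p$ odd) and Lemma \ref{Lemma $9.2.1$} ($p=2$) we have $m_{\mathbb{Q}}(\widetilde\chi)=1$. By Theorem \ref{Theorem $1$}, the unique irreducible $\mathbb{Q}$-representation $\rho$ of $G$ having $\widetilde\rho$ as a complex constituent satisfies $\rho\otimes_{\mathbb{Q}}\mathbb{C}\cong m_{\mathbb{Q}}(\widetilde\chi)\bigoplus_{\sigma\in\Gamma}\widetilde\rho^{\sigma}$, which by the previous paragraph is just $\widetilde\rho$ itself. Therefore $\widetilde\rho$ is realizable over $\mathbb{Q}$, which is exactly the claim. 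Note, as a byproduct, that $\rho$ is then the unique faithful irreducible $\mathbb{Q}$-representation of $G$, of degree $\phi(p^n)$.

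I do not expect any real obstacle here: the substantive computations were already carried out in Lemmas \ref{Lemma $7.0.1$}, \ref{Lemma $9.2.1$} and \ref{Lemma $10.0.2$}, and the only thing to verify is the elementary fact that Galois conjugation preserves the kernel of a character. (Alternatively one can bypass the explicit appeal to uniqueness: since $G$ has a faithful irreducible $\overline{\mathbb{Q}}$-representation, Theorem \ref{Theorem $2.0.3$} yields a faithful irreducible $\mathbb{Q}$-representation $\rho$; decomposing $\rho\otimes_{\mathbb{Q}}\mathbb{C}$ via Theorem \ref{Theorem $1$} and using that the Schur index is $1$ exhibits it as a sum of Galois-conjugate faithful irreducible complex representations, all necessarily equal, so $\rho\otimes_{\mathbb{Q}}\mathbb{C}$ is irreducible and equal to $\widetilde\rho$.)
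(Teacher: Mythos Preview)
Your proof is correct and follows essentially the same route as the paper: uniqueness of the faithful irreducible complex representation forces its character to be rational-valued (since Galois conjugates remain faithful irreducible), and then Schur index $1$ from Lemmas \ref{Lemma $7.0.1$} and \ref{Lemma $9.2.1$} gives realizability over $\mathbb{Q}$ via Theorem \ref{Theorem $1$}. The only difference is that you spell out explicitly why uniqueness implies $\mathbb{Q}(\widetilde\chi)=\mathbb{Q}$, whereas the paper asserts this step in a single sentence.
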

\begin{proof}
Let $\rho$ be the unique faithful irreducible representation of $G$
over $\mathbb{C}$ with the character $\chi$. As $\rho$ is the unique
faithful irreducible representation, then $\chi$ is a rational
valued character. Consequently, the Galois conjugacy class of $\chi$
is $\chi$ itself. By Lemma \ref{Lemma $7.0.1$} and Lemma \ref{Lemma
$9.2.1$}, the Schur index of $\rho$ with respect to $\mathbb{Q}$ is
equal to $1$. So $\rho$ is realizable over $\mathbb{Q}$. This
completes the proof.
\end{proof}

We note the immediate corollary which is striking.

\begin{cor}
$G$ has a unique faithful irreducible representation over $\mathbb{Q}$.
\end{cor}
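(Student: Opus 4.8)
The plan is to deduce the corollary directly from the three lemmas immediately preceding it, combined with Schur's theory from Section~\ref{Section $2.1$}. The key structural fact to exploit is that $G$ has a \emph{unique} minimal normal subgroup: any faithful representation (over any field of characteristic $0$) must have trivial kernel, and since a nontrivial kernel would have to contain some minimal normal subgroup, faithfulness is equivalent to the kernel not containing the unique minimal normal subgroup. So ``faithful'' is a very rigid condition here.

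First I would invoke Lemma~\ref{Lemma $10.0.2$}: $G$ has a \emph{unique} faithful irreducible complex representation $\rho$, of degree $\phi(p^n)$, with character $\chi$. Next, by Theorem~\ref{Theorem $2.0.3$}, the existence of a faithful irreducible $\mathbb{C}$-representation guarantees the existence of a faithful irreducible $\mathbb{Q}$-representation, so at least one exists. Now I would argue uniqueness. Suppose $\psi$ is a faithful irreducible $\mathbb{Q}$-representation of $G$ with character $\Psi$. By Theorem~\ref{Theorem $1$} (Schur), $\Psi = m(\sum_{\sigma \in \Gamma} \widetilde\chi^{\,\sigma})$ for some irreducible complex character $\widetilde\chi$, where $m = m_{\mathbb{Q}}(\widetilde\chi)$ and $\Gamma$ is the Galois group of $\mathbb{Q}(\widetilde\chi)/\mathbb{Q}$; and as shown in the proof of Theorem~\ref{Theorem $2.0.3$}, $\ker(\psi) = \ker(\widetilde\chi)$. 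Since $\psi$ is faithful, $\widetilde\chi$ is a faithful irreducible complex character of $G$, hence by Lemma~\ref{Lemma $10.0.2$} we must have $\widetilde\chi = \chi$. By the preceding lemma, $\chi$ is rational-valued and has Schur index $1$ with respect to $\mathbb{Q}$, so $\Gamma$ is trivial and $m = 1$; therefore $\Psi = \chi$. Thus the faithful irreducible $\mathbb{Q}$-representation is unique (it is the realization over $\mathbb{Q}$ of $\rho$), proving the corollary.

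I do not expect any serious obstacle: all the heavy lifting has been done in Lemmas~\ref{Lemma $10.0.2$} and its two successors (uniqueness of the faithful complex representation, rationality of its character, and triviality of its Schur index), and the corollary is essentially a formal consequence of Schur's correspondence between irreducible $\mathbb{Q}$-characters and Galois orbits of irreducible $\mathbb{C}$-characters. The only point that needs a line of care is making explicit that a faithful irreducible $\mathbb{Q}$-representation $\psi$ forces its associated complex constituent $\widetilde\chi$ to be faithful --- but that is exactly the content of the kernel identity $\ker(\psi) = \ker(\widetilde\chi)$ established in Theorem~\ref{Theorem $2.0.3$}. Hence a two- or three-line proof suffices.
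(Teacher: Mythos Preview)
Your proposal is correct and is exactly the argument the paper has in mind: the corollary is stated without proof as an ``immediate'' consequence of the preceding lemma (realizability of the unique faithful complex representation over $\mathbb{Q}$), together with Schur's correspondence (Theorem~\ref{Theorem $1$}) and the kernel identity from Theorem~\ref{Theorem $2.0.3$}. You have simply written out explicitly the two- or three-line deduction the paper leaves to the reader.
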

\vskip 5mm

\subsection{Computing $c(G)$ and $q(G)$}
\vskip 5mm

\begin{lem}
Let $\eta$ be a faithful quasi-permutation representation of $G$ over $\mathbb{C}$. Then $\eta$ contains the unique faithful irreducible representation of $G$ over $\mathbb{C}$ as an irreducible constituent.
\end{lem}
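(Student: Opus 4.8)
The plan is to reduce the statement to two structural facts already established: that $G$ has a unique minimal normal subgroup $Z$, which is the central subgroup of order $p$, and that $G$ has a unique faithful irreducible complex representation, namely the one of degree $\phi(p^n)$ from Lemma \ref{Lemma $10.0.2$}. It is worth noting at the outset that the quasi-permutation hypothesis is not needed here; the argument works for any faithful complex representation of $G$, and uses only that $\eta$ is faithful.

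First I would write $\eta \cong \bigoplus_{j} m_j \widetilde\rho_j$, where the $\widetilde\rho_j$ are pairwise inequivalent irreducible complex representations of $G$ and the $m_j$ are positive integers. Faithfulness of $\eta$ is the statement that $\ker \eta = \bigcap_j \ker \widetilde\rho_j = \{1\}$. The key elementary observation is that in a finite group possessing a unique minimal normal subgroup $Z$, every nontrivial normal subgroup contains $Z$: indeed, a nontrivial normal subgroup $M$ contains some minimal normal subgroup of $G$, and by uniqueness that subgroup must be $Z$. Applying this to the normal subgroups $\ker \widetilde\rho_j$: if each $\ker \widetilde\rho_j$ were nontrivial, each would contain $Z$, whence $Z \subseteq \bigcap_j \ker\widetilde\rho_j = \{1\}$, contradicting $|Z| = p$. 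Therefore at least one constituent, say $\widetilde\rho_{j_0}$, satisfies $\ker \widetilde\rho_{j_0} = \{1\}$, i.e.\ $\widetilde\rho_{j_0}$ is a faithful irreducible complex representation of $G$.

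Finally, by Lemma \ref{Lemma $10.0.2$}, $G$ has a \emph{unique} faithful irreducible complex representation, which forces $\widetilde\rho_{j_0}$ to be precisely that representation; hence it appears as an irreducible constituent of $\eta$, as claimed. There is no substantial obstacle in this argument: the only points requiring care are the elementary fact that a nontrivial normal subgroup of $G$ must contain the unique minimal normal subgroup, and the identification of that minimal normal subgroup as the central $C_p$, both of which are supplied by the preceding lemmas of this section.
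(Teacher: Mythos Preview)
Your proof is correct and follows essentially the same route as the paper's: decompose $\eta$ into irreducibles, use the unique minimal normal subgroup to force some constituent to be faithful, and invoke Lemma~\ref{Lemma $10.0.2$} for uniqueness. Your observation that the quasi-permutation hypothesis plays no role is also accurate; the paper's own proof likewise never uses it.
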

\begin{proof}
Let $\eta\cong \eta_1\oplus \eta_2 \oplus \cdots \oplus \eta_r$ be the direct sum decomposition into irreducible $\mathbb{C}$-representations of $G$. If no $\eta_i$ is faithful, then its kernel will contain the unique minimal normal subgroup of $G$. It follows that the unique minimal normal subgroup of $G$ is contained in the kernel of $\eta$, and so $\eta$ is not faithful, a contradiction. Thus some $\eta_i$ is faithful, and by Lemma \ref{Lemma $10.0.2$}, $\eta_i$ is equivalent to the unique faithful irreducible representation of $G$ over $\mathbb{C}$. This completes the proof.
\end{proof}
\begin{cor}
Let $\eta$ be a faithful quasi-permutation representation of $G$ over $\mathbb{Q}$. Then $\eta$ contains the unique faithful irreducible representation of $G$ over $\mathbb{Q}$ as an irreducible constituent.
\end{cor}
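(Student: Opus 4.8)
The plan is to mimic, verbatim, the argument of the preceding lemma (the $\mathbb{C}$-version), replacing $\mathbb{C}$ by $\mathbb{Q}$ throughout and invoking the corollary just proved that $G$ has a \emph{unique} faithful irreducible representation over $\mathbb{Q}$. So first I would write $\eta \cong \eta_1 \oplus \eta_2 \oplus \cdots \oplus \eta_r$ as a direct sum of irreducible $\mathbb{Q}$-representations of $G$.

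Next I would argue that at least one $\eta_i$ must be faithful. Suppose not. Since $G$ has a unique minimal normal subgroup (by the lemma of this section), call it $Z$ (which has order $p$), every nontrivial normal subgroup of $G$ contains $Z$. If $\eta_i$ is not faithful, then $\ker(\eta_i) \neq \{1\}$ is a normal subgroup of $G$, hence $Z \leq \ker(\eta_i)$. This holds for every $i$, so $Z \leq \bigcap_{i=1}^{r}\ker(\eta_i) = \ker(\eta)$, contradicting the faithfulness of $\eta$. Therefore some $\eta_i$ is faithful.

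Finally, by the corollary preceding this statement, $G$ has a unique faithful irreducible representation over $\mathbb{Q}$; hence the faithful constituent $\eta_i$ is equivalent to it, which is exactly the assertion.

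There is essentially no obstacle here: the only ingredients are (i) the uniqueness of the minimal normal subgroup of $G$, already established, and (ii) the uniqueness of the faithful irreducible $\mathbb{Q}$-representation, recorded in the corollary just above. The one point to keep in mind is that the decomposition into irreducible \emph{rational} constituents is still multiplicity-permitting and that the kernel of a direct sum is the intersection of the kernels of its summands, both of which are immediate. So the proof is a one-paragraph repetition of the $\mathbb{C}$-case.
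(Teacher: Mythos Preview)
Your proposal is correct and is exactly the intended argument: the paper states this corollary without proof immediately after the $\mathbb{C}$-version, precisely because one repeats the preceding lemma's reasoning with $\mathbb{Q}$ in place of $\mathbb{C}$ and invokes the uniqueness of the faithful irreducible $\mathbb{Q}$-representation established just before.
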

\begin{lem}\label{Lemma $5.0.7$}
 The character of the unique faithful irreducible representation of $G$ over $\mathbb{C}$ takes values $\phi(p^{n})$, $-p^{n-1}$ and $0$, where $\phi$ is the Euler's phi function.
\end{lem}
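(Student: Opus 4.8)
The plan is to realize the unique faithful irreducible complex representation of $G$ as an induced representation from the cyclic normal subgroup $N = \langle a\rangle \cong C_{p^{n}}$ and then to compute its character via the standard induced‑character formula, the point on $N$ being a Ramanujan sum. This approach is uniform in $p$, so the cases $p$ odd and $p=2$ need not be separated. Concretely, as shown in the course of proving Lemma \ref{Lemma $10.0.2$}, the faithful irreducible complex representation $\rho$ of $G$ satisfies $\rho \cong \psi\uparrow^{G}$ for a faithful linear character $\psi$ of $N$, say $\psi(a) = \zeta$ with $\zeta$ a fixed primitive $p^{n}$-th root of unity; let $\chi$ denote the character of $\rho$.

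Next I would apply the induced‑character formula. Since $N$ is normal in $G$, every conjugate of an element $g \in G\setminus N$ again lies outside $N$, so the formula gives $\chi(g) = 0$ for all $g \notin N$. For $g = a^{m} \in N$ the contributions are constant on each coset of $N$, and after the factor $1/|N|$ cancels the $|N|$ elements of each coset one is left with $\chi(a^{m}) = \sum_{h \in T}\psi(h^{-1}a^{m} h)$, where $T$ is a transversal of $N$ in $G$. Here I would invoke the defining property of the holomorph: the conjugation action of $G$ on $N$ has kernel $C_{G}(N) = N$ and hence induces an isomorphism $G/N \cong \mathrm{Aut}(N) \cong (\mathbb{Z}/p^{n}\mathbb{Z})^{*}$. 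Therefore, as $h$ ranges over $T$, the unit $u_{h}$ determined by $h^{-1}a h = a^{u_{h}}$ ranges bijectively over $(\mathbb{Z}/p^{n}\mathbb{Z})^{*}$, and so
$$\chi(a^{m}) = \sum_{u \in (\mathbb{Z}/p^{n}\mathbb{Z})^{*}} \zeta^{mu} = c_{p^{n}}(m),$$
the Ramanujan sum. (Alternatively, the same values can be extracted from the explicit matrices for $\widetilde\rho_{n,1}$ in Theorem \ref{Theorem $6.0.1$} and Theorem \ref{Theorem $9.1.1$}, at the cost of treating $p$ odd and $p=2$ separately.)

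Finally I would evaluate $c_{p^{n}}(m)$ by writing $\sum_{u \in (\mathbb{Z}/p^{n}\mathbb{Z})^{*}}\zeta^{mu} = \sum_{u=1}^{p^{n}}\zeta^{mu} - \sum_{v=1}^{p^{n-1}}(\zeta^{p})^{mv}$: the first geometric sum equals $p^{n}$ if $p^{n}\mid m$ and $0$ otherwise, while the second equals $p^{n-1}$ if $p^{n-1}\mid m$ and $0$ otherwise. Hence $\chi(a^{m}) = \phi(p^{n})$ when $a^{m} = 1$, $\chi(a^{m}) = -p^{n-1}$ when $a^{m}$ has order $p$, and $\chi(a^{m}) = 0$ in all remaining cases; together with the vanishing of $\chi$ off $N$, this is exactly the claimed list of values. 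I do not expect a substantive obstacle: the only points needing care are the bookkeeping with the $1/|N|$ normalization in the induced‑character formula and the observation that the transversal $T$ realizes each automorphism of $N$ exactly once, both of which are routine since $G$ is the holomorph of $N$.
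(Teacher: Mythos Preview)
Your proof is correct and follows essentially the same route as the paper: both use that $\rho$ is induced from a faithful linear character of the normal subgroup $N=\langle a\rangle$, deduce that $\chi$ vanishes off $N$, and identify $\chi(a^{m})$ with the Ramanujan sum $c_{p^{n}}(m)=\sum_{u\in(\mathbb{Z}/p^{n}\mathbb{Z})^{*}}\zeta^{mu}$. The only cosmetic difference is in the evaluation---the paper reads off $\chi|_{N}$ from the decomposition $\rho|_{N}\cong\bigoplus_{i}\rho_{i}$ and uses that the sum of the primitive $p^{i}$-th roots of unity vanishes for $i\ge 2$, whereas you compute the same quantity via the difference of two geometric series.
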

\begin{proof}
Let $\rho$ be the faithful irreducible representation of $G$ over $\mathbb{C}$, and $\chi$ be its corresponding character.
As we have seen in the proof of Lemma \ref{Lemma $10.0.2$}, $\rho$ is induced by a faithful irreducible $\mathbb{C}$-representation of the normal subgroup $N = \langle{a}\rangle \cong C_{p^n}$. So $\chi$ takes the value $0$ outside $H$. The restriction of $\rho$ to $N$ is direct sum of all the faithful irreducible $\mathbb{C}$-representations of $N$. So $\chi(a^s)$ is the sum of $s$-th powers of primitive $p^n$-th roots of unity. If $(s,p)=1$ then it is clear that $\chi(a)= \chi(a^s)$. So the values of $\chi$ on $N = \langle a\rangle$ are $\chi(1),\chi(a),\chi(a^p),\dots,\chi(a^{p^{n-1}})$.

If $n=1$, then $\chi(1)=\phi(p)=p-1$, $\chi(a) = $ the sum of primitive $p$-th roots of unity $= -1$. Note that whenever $(s, p) = 1$, $\chi(a^s) = -1$.

Now we assume that $n >1$. It is easy to see that for $i > 2$, the sum of primitive $p^i$-th roots of unity is equal to $0$. Notice that $\chi(a^{p^{n-i}})$ is the sum of primitive $p^{i}$-th roots of unity (with multiplicity $p^{n-i}$). So for $i\ge 2$, $\chi(a^{p^{n-i}}) = 0$. Consider the case $i=0$ and $i=1$. If $i=0$, then $\chi(a^{p^n})=\chi(1)=\phi(p^n)$, where $\phi$ is the Euler's $\phi$ function. If $n=1$, then $\chi(a^{p^{n-1}})$ is the sum of primitive $p$-th roots of unity with multiplicity $p^{n-1}$, and $\chi(a^{p^{n-1}}) = -p^{n-1}$. This completes the proof.
\end{proof}
\begin{lem}\label{lem2}
$c(G) = q(G) = p^{n}$.
\end{lem}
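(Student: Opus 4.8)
The plan is to reduce everything to the single faithful irreducible complex character of $G$ and to evaluate the quantities $d(\chi)$ and $m(\chi)$ for it. Since we have shown that $G$ has a unique minimal normal subgroup, Corollary \ref{cor1} applies and gives
$$c(G) = \min\{c(\chi)(1) : \chi \text{ a faithful irreducible complex character of } G\}, \quad q(G) = \min\{m_{\mathbb{Q}}(\chi)\,c(\chi)(1) : \chi \text{ a faithful irreducible complex character of } G\}.$$
By Lemma \ref{Lemma $10.0.2$}, $G$ has exactly one faithful irreducible complex character $\chi$, of degree $\phi(p^n)$, so both minima are attained at this $\chi$; and by Lemmas \ref{Lemma $7.0.1$} and \ref{Lemma $9.2.1$} we have $m_{\mathbb{Q}}(\chi) = 1$. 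Hence $c(G) = q(G) = c(\chi)(1)$, and it only remains to compute $c(\chi)(1) = d(\chi) + m(\chi)$, using the formula in the corollary following \cite{Behravesh7}, Theorem 3.7.

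First I would note that $\chi$ is rational valued: being the unique faithful irreducible complex character, $\chi$ is fixed by $\mathrm{Gal}(\mathbb{Q}(\zeta_{|G|})/\mathbb{Q})$, so $\mathbb{Q}(\chi) = \mathbb{Q}$ and the Galois group $\Gamma(\chi)$ is trivial. Consequently $d(\chi) = |\Gamma(\chi)|\,\chi(1) = \chi(1) = \phi(p^n)$, and, since $\chi \neq 1_G$, the definition of $m(\chi)$ reduces to $m(\chi) = |\min\{\chi(g) : g \in G\}|$.

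Next I would invoke Lemma \ref{Lemma $5.0.7$}, which states that $\chi$ takes only the values $\phi(p^n)$, $-p^{n-1}$ and $0$. The minimum of these values is $-p^{n-1}$, so $m(\chi) = p^{n-1}$. Therefore
$$c(G) = q(G) = c(\chi)(1) = d(\chi) + m(\chi) = \phi(p^n) + p^{n-1} = p^{n-1}(p-1) + p^{n-1} = p^n,$$
which is the claim (this argument also covers the small cases such as $\mathrm{Hol}(C_2)$, where $\phi(p^n) = p-1$ and $-p^{n-1} = -1$).

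Since all the substantive inputs — the uniqueness and degree of the faithful complex character, its rationality, its Schur index being $1$, and its explicit character values — have already been established, there is no serious obstacle here; the only points requiring care are identifying $\Gamma(\chi)$ as trivial so that the expressions for $d(\chi)$ and $m(\chi)$ collapse to the stated forms, and reading off $\min_g \chi(g) = -p^{n-1}$ from Lemma \ref{Lemma $5.0.7$}.
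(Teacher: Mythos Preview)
Your proposal is correct and follows essentially the same approach as the paper: invoke Corollary~\ref{cor1} via the unique minimal normal subgroup, use Lemma~\ref{Lemma $10.0.2$} for uniqueness and degree of the faithful character, Lemmas~\ref{Lemma $7.0.1$} and~\ref{Lemma $9.2.1$} for Schur index $1$, and Lemma~\ref{Lemma $5.0.7$} for the minimum character value $-p^{n-1}$, then conclude $c(G)=q(G)=\phi(p^n)+p^{n-1}=p^n$. Your write-up is in fact more explicit than the paper's, in that you spell out why $\Gamma(\chi)$ is trivial and how the formula $c(\chi)(1)=d(\chi)+m(\chi)$ unwinds.
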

\begin{proof}
By Lemma \ref{Lemma $10.0.2$}, Lemma \ref{Lemma $7.0.1$} and Lemma
\ref{Lemma $9.2.1$}, $G$ has a unique faithful irreducible
representation over $\mathbb{C}$, and is of Schur index $1$ over
$\mathbb{Q}$. By Lemma \ref{Lemma $5.0.7$}, the minimum character
value of that representation is $-p^{n-1}$. By Corollary
\ref{cor1}, we get $c(G) = q(G) = p^{n}$.
\end{proof}
\vskip 5mm

\subsection{Computing $p(G)$}
\vskip 5mm

\begin{lem}\label{lem3}
$p(G) \leq p^{n}$.
\end{lem}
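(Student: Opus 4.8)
The plan is to exhibit an explicit faithful action of $G$ on a set of size $p^n$, equivalently a subgroup $H \le G$ of index $p^n$ whose core in $G$ is trivial; then the permutation representation on the cosets $G/H$ is faithful and of degree $p^n$, giving $p(G) \le p^n$. The natural candidate is $H = \langle b \rangle$ in the odd case (and $H = \langle b, c\rangle$ in the case $p=2$), i.e.\ a complement to the normal subgroup $N = \langle a\rangle \cong C_{p^n}$. Indeed $[G:H] = |N| = p^n$ in both cases, since $|G| = p^{2n-1}(p-1)$ with $|H| = p^{n-1}(p-1)$ when $p$ is odd, and $|G| = 2^{2n-1}$ with $|H| = 2^{n-1}$ when $p = 2$.

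The key step is to verify that the core $K := \bigcap_{x \in G} H^x$ is trivial. Since $G = N \rtimes H$, the coset space $G/H$ is $N$-equivariantly identified with $N$ acting on itself by left translation, so $N$ already acts faithfully on $G/H$; hence $K \cap N = 1$. On the other hand, $K$ is normal in $G$, and $G$ has a unique minimal normal subgroup, which (by the first Lemma of Section~10) has order $p$ and is contained in $N$ — it is the subgroup of order $p$ inside $\langle a\rangle$. Therefore any nontrivial normal subgroup of $G$ meets $N$ nontrivially; since $K \cap N = 1$, we conclude $K = 1$. Thus the permutation representation of $G$ on $G/H$ is faithful of degree $p^n$, and $p(G) \le p^n$.

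The main obstacle — and it is a minor one — is confirming that the chosen $H$ is genuinely core-free and has the right index in \emph{both} structural regimes ($p$ odd, where $G$ is split metacyclic, and $p = 2$ with $n \ge 3$, where $G$ is split metabelian but not metacyclic). In both cases the semidirect-product decomposition $G = N \rtimes H$ is exactly the holomorph structure $C_{p^n} \rtimes \operatorname{Aut}(C_{p^n})$, so $H$ is a complement to $N$ of order $|\operatorname{Aut}(C_{p^n})|$, and the argument above applies uniformly: the faithful translation action of $N$ on $G/H$ forces $K \cap N = 1$, and uniqueness of the minimal normal subgroup forces $K = 1$. Alternatively, one may invoke Lemma~3.2 directly with the single subgroup $H_1 = H$, noting $\operatorname{core}_G H = 1$, to conclude $p(G) \le [G:H] = p^n$. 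For $p = 2$, $n \le 2$ the claim is immediate from the known structure of $C_2$ and $D_8$.
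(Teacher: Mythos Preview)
Your proposal is correct and follows essentially the same route as the paper: both take $H = \operatorname{Aut}(C_{p^n})$ (the complement to $N = \langle a\rangle$ in the holomorph) and use the faithful permutation action on the $p^n$ cosets $G/H$. The only difference is in the core-free verification: the paper argues directly that any normal $K \le H$ satisfies $[K,N] \subseteq K \cap N = 1$, so $K$ centralizes $N$ and hence $K = 1$ (since $H$ acts faithfully on $N$), whereas you deduce $K = 1$ from $K \cap N = 1$ together with the earlier lemma on the unique minimal normal subgroup --- both arguments are valid and equally short.
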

\begin{proof}
 We show that the subgroup ${\rm Aut}(C_{p^n})$ is core-free in $G$, i.e., it does not contain any non-trivial normal subgroup of $G$. Suppose that $K\le {\rm Aut}(C_{p^n})$ is a non-trivial normal subgroup of $G$. Then $K$ is normalized by $N = \langle a \rangle\cong C_{p^n}$.
As $N$ is normalized by $K$ and $K\cap N =1$, then $K$ centralizes
$N$. So every automorphism in $K$ fixes every element of $\langle
x\rangle$; it follows that $K=1$. Consider the permutation
representation of $G$ on the cosets of ${\rm Aut}(C_{p^n})$; there
are $p^n$ such cosets. Notice that the kernel of this permutation
representation is the core of ${\rm Aut}(C_{p^n})$, and which is
$1$. So this representation is faithful. Hence the minimum degree of
a faithful permutation representation of $G$ is not more than $p^n$.
This completes the proof.
\end{proof}
Finally, we have the following Theorem.

\begin{thm}\label{Theorem $10.2.2$}
$c(G) = q(G) = p(G) = p^{n}$.
\end{thm}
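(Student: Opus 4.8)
The theorem combines the three quantities $c(G)$, $q(G)$, $p(G)$ for $G = \mathrm{Hol}(C_{p^n})$, and all the hard work has already been staged in the preceding lemmas. My plan is simply to assemble them. First I would invoke Lemma \ref{lem2}, which gives $c(G) = q(G) = p^n$; this already uses that $G$ has a unique minimal normal subgroup, that $G$ has a unique faithful irreducible complex representation $\rho$ of degree $\phi(p^n)$ with Schur index $1$ over $\mathbb{Q}$, and that the minimal character value of $\rho$ is $-p^{n-1}$, so that $c(\chi)(1) = d(\chi) + m(\chi) = \phi(p^n) + p^{n-1} = p^n$ via Corollary \ref{cor1}. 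The only remaining input is the value of $p(G)$.

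\emph{Bounding $p(G)$ from above and below.} From Lemma \ref{lem3} we already have $p(G) \le p^n$, obtained by exhibiting the core-free subgroup $\mathrm{Aut}(C_{p^n})$ of index $p^n$. For the reverse inequality I would use the general chain $c(G) \le q(G) \le p(G)$ recalled in the introduction together with $c(G) = p^n$ from Lemma \ref{lem2}: this forces $p^n = c(G) \le p(G)$. Combining, $p(G) = p^n$, and hence $c(G) = q(G) = p(G) = p^n$.

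\emph{Where the difficulty lies.} There is essentially no obstacle left at this final step — the entire substance is in the earlier lemmas, particularly Lemma \ref{Lemma $10.0.2$} (uniqueness of the faithful irreducible complex representation), Lemmas \ref{Lemma $7.0.1$} and \ref{Lemma $9.2.1$} (Schur index $1$), and Lemma \ref{Lemma $5.0.7$} (the character takes the value $-p^{n-1}$). The one mild subtlety worth spelling out explicitly is that $p(G) \ge c(G)$ is not a triviality about this particular group but a consequence of the universal inequality $c(G) \le q(G) \le p(G)$ valid for every finite group, so the lower bound on $p(G)$ comes for free once $c(G)$ is pinned down. It would also be worth remarking, for completeness, that the argument is uniform in $p$ once the case-dependent lemmas are in hand: although the structure of the holomorph and the descriptions of its irreducible representations differ markedly for $p$ odd versus $p = 2$, every ingredient needed here (unique minimal normal subgroup, unique faithful irreducible complex representation of degree $\phi(p^n)$, Schur index $1$, minimal character value $-p^{n-1}$, and a core-free subgroup of index $p^n$) has been established in both cases, so the concluding synthesis is identical.

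Concretely, the proof reads: By Lemma \ref{lem2}, $c(G) = q(G) = p^n$. By Lemma \ref{lem3}, $p(G) \le p^n$. Since $c(G) \le q(G) \le p(G)$ for every finite group, we get $p^n = c(G) \le p(G) \le p^n$, whence $p(G) = p^n$. Therefore $c(G) = q(G) = p(G) = p^n$.
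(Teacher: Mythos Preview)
Your proof is correct and matches the paper's own argument essentially verbatim: combine Lemma~\ref{lem2} ($c(G)=q(G)=p^n$), Lemma~\ref{lem3} ($p(G)\le p^n$), and the general inequality $c(G)\le q(G)\le p(G)$ to conclude. The paper's proof is a single sentence to this effect, and your expanded discussion of where the real work lies is accurate but not needed for the formal argument.
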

\begin{proof}
As $c(G)\leq q(G)\leq p(G)$, then by Lemma \ref{lem2} and Lemma \ref{lem3}, we get $c(G) = q(G) = p(G) = p^{n}$.
\end{proof}

{\bf Acknowledgments}

The first author expresses gratitude to Rahul Dattatraya Kitture for useful discussions. He also wishes to express thanks to both the institutes, Indian Statistical Institute, Bangalore Centre, Bangalore-India and Harish-Chandra Research Institute (HRI), Prayagraj (Allahabad)-India for giving all the facilities to complete this work.

\end{document}